\documentclass[a4paper,leqno]{article}

\usepackage[latin1]{inputenc}
\usepackage{amsmath,amssymb,amsthm,
	verbatim,
	fancyhdr,amsfonts
}

\usepackage[format=plain,labelfont=it,textfont=it]{caption}
\usepackage{needspace}
\usepackage{float}
\usepackage{tikz}
\usetikzlibrary{decorations.pathreplacing}

\usepackage[pdftex]{lscape}
\usepackage{amsmath}
\usepackage{amsthm}
\usepackage{amsfonts,amsmath,amssymb}
\usepackage{enumerate,verbatim,color}
\usepackage{epsfig}
\usepackage{textcomp}
\usepackage{graphicx}
\usepackage{amsfonts}
\usepackage{mathrsfs}
\usepackage{upgreek}
\usepackage{mathtools}
\usepackage{pdfpages}
\usepackage{fullpage}

\usepackage[right=2cm,left=2cm,top=2cm,bottom=2cm]{geometry}

\newcommand{\virgolette}[1]{``#1''}
\newtheorem{teorema}{Theorem}[section]

\newtheorem{lemma}[teorema]{Lemma}
\newtheorem{prop}[teorema]{Proposition}
\newtheorem{osss}[teorema]{Remark}

\theoremstyle{definition}
\newtheorem{defi}[teorema]{Definition}
\theoremstyle{remark}
\newtheorem*{assumptionI}{\bf Assumptions I}
\newtheorem*{assumptionII}{\bf Assumptions II}
\newtheorem*{boundaries}{\bf Vertex boundary conditions}
\newtheorem*{notations}{\bf Graph boundary conditions}

\newcommand{\iii}{{\, \vert\kern-0.25ex\vert\kern-0.25ex\vert\, }}

\DeclareMathOperator{\spn}{span}
\newcommand{\ffi}{\varphi}

\newcommand{\AL}{\mathcal{A}}

\newcommand{\CC}{\mathcal{C}}
\newcommand{\LL}{\mathcal{L}}
\newcommand{\MM}{\mathcal{M}}

\newcommand{\NN}{\mathcal{N}}
\newcommand{\KK}{\mathcal{K}}

\newcommand{\Di}{\mathcal{D}}

\newcommand{\Z}{\mathbb{Z}}
\newcommand{\N}{\mathbb{N}}
\newcommand{\R}{\mathbb{R}}
\newcommand{\Q}{\mathbb{Q}}
\newcommand{\C}{\mathbb{C}}

\newcommand{\Hi}{\mathscr{H}}

\newcommand{\Gi}{\mathscr{G}}

\newcommand{\dd}{\partial}

\newcommand{\ra}{\rangle}
\newcommand{\la}{\langle}

\newcommand{\G}{\Gamma}

\date{}
\title{Bilinear quantum systems on compact graphs: well-posedness and global exact controllability}

\author{Alessandro Duca
	\\ \ \\
	{\small  Universit\'e de Lorraine, CNRS, INRIA, IECL, F-54000 Nancy, France}\\
	{\small \texttt{alessandro.duca@inria.fr}}\\
}

\begin{document}

\maketitle
\begin{abstract}
A major application of the mathematical concept of graph in 
quantum mechanics is to model networks of electrical
wires or electromagnetic wave-guides. In this paper, we address the dynamics of a 
particle trapped on such a 
network in presence of an external electromagnetic field. We study the controllability of the motion when the 
intensity of the field changes over time and plays the role of control. From a mathematical point of view, the 
dynamics of 
the particle is modeled by the so-called bilinear 
Schr\"odinger equation defined on a graph representing the network. The main purpose of this work is to extend the 
existing theory for bilinear quantum systems on bounded intervals to the framework of graphs. To this end, we 
introduce a suitable mathematical 
setting where to address the controllability of the equation from a theoretical point 
of view. More precisely, we determine assumptions on the network and on the potential field ensuring its
global exact controllability in suitable spaces. Finally, we discuss two applications of our results and their 
practical implications to two specific
problems involving a star-shaped network and a tadpole graph. 
\end{abstract}

\noindent


\noindent

\section{Introduction}\label{intro}

During the last decades, graph type models (Figure 
\ref{fig:1}) have been extensively studied in the literature for the modeling of 
phenomena arising in science, social 
sciences 
and
engineering. Applications to the quantum mechanics include the study of the dynamics of free
electrons in organic molecules starting from the seminal work \cite{198} (see also \cite{172, 201, 209, 777, 
182}), the 
superconductivity in granular and 
artificial materials \cite{121212}, acoustic and electromagnetic wave-guides networks in \cite{112, 181}, etc. 
\begin{figure}[H]
	\centering
	\includegraphics[width=\textwidth-150pt,height=30pt]{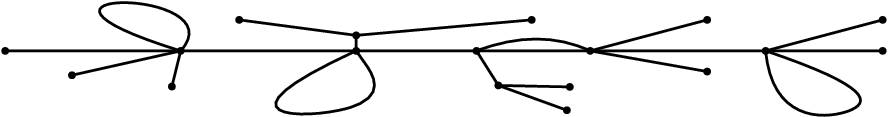}
	\caption{A compact graph is a one-dimensional domain composed by finite vertices (points) connected by edges 
(segments) of finite lengths.}\label{fig:1}
\end{figure}

The aim of this paper is to study the dynamics of a particle trapped on a 
network of wires, or wave-guides, in presence of an electromagnetic 
external field. We assume that the particle is subjected to zero resistance when it crosses the nodes of the 
network. 
The intensity of the 
external field is a function of the time and it plays the role of control. The dynamics of 
the particle is driven by the bilinear Schr\"odinger equation in $L^2(\Gi,\C)$ where $\Gi$ is the graph 
modeling the network:
\begin{equation}\label{mainx1}\tag{BSE}\begin{split}
\begin{cases}
i\dd_t\psi(t)=A\psi(t)+u(t)B\psi(t),\ \ \ \ \ \ \ \ &t\in(0,T),\\
\psi(0)=\psi_0,\ &\ \ \ \ T>0.\\
\end{cases}
\end{split}
\end{equation}
The term $u(t)B$ in the \eqref{mainx1} represents the external field acting on the system. The bounded symmetric 
operator $B$ describes the action of the field and $u\in L^2((0,T),\R)$ its intensity. The operator $A=-\Delta$ is 
a self-adjoint Laplacian equipped with 
suitable boundary conditions (see the next paragraphs for further details).

The mathematical analysis of self-adjoint operators on networks was 
addressed in \cite{209} by Ruedenber and Scherr (see also \cite{204}). There, they studied the dynamics of 
specific electrons in the conjugated double-bounds organic molecules. In such a context, some electrons behave 
as if they were trapped on a
network of wave-guides. The graphs are obtained 
as the
idealization of these structures in the limit where the diameter of their section is much smaller 
than the length. A similar approach was developed by Saito in \cite{211, 212} where the graphs are obtained as 
``shrinking'' domains. For analogous ideas, we refer to the papers \cite{208, 202}.

\medskip

A natural question on the \eqref{mainx1} of high relevance for the mentioned applications is whether it is 
possible to control 
the motion of the 
particle through the network by varying the intensity of the external field.
From a mathematical point of view, we wonder if for any couple of quantum states $\psi_0$ and $\psi_1$, there 
exists a control $u$ so that the solution of the \eqref{mainx1} with initial state $\psi_0$ reaches, or at least 
approaches, 
$\psi_1$.

A peculiarity of the bilinear quantum systems as \eqref{mainx1} is that their exact
controllability can not be ensured in the Hilbert space where the dynamics is defined when $B$ is a 
bounded operator. This 
is a consequence of the results developed by Ball, 
Mardsen and Slemrod in the work on bilinear systems $\cite{ball}$. 

For the bilinear Schr\"odinger equation 
\eqref{mainx1} on the interval $\Gi=(0,1)$, a successful 
strategy was developed by Beauchard in $\cite{be1}$ which addresses the exact controllability in suitable 
subspaces of $L^2((0,1),\C)$. Beauchard studied the equation with $A$ the Dirichlet 
Laplacian $-\Delta_D$ on $(0,1)$ and proved the local exact controllability in 
$D(|\Delta_D|^\frac{3}{2})$. This method was later exploited in 
\cite{laurent,morgane1,morganerse2,mio2,mio1}.

\medskip

Even though the global exact controllability of the bilinear Schr\"odinger equation  \eqref{mainx1} on $\Gi=(0,1)$ 
is well-established, 
the result 
on networks is still an open problem to the best of our knowledge.
{\setlength{\leftmargini}{0.4cm}\begin{itemize}
\item Firstly, it is not clear which is 
the ``good" space where to consider the dynamics. When $A$ is a self-adjoint Laplacian on a 
compact graph, it is not easy to characterize how the boundary conditions defining the 
domains $D(|A|^\frac{s}{2})$ are affected by the variation of the 
parameter $s>0$.
\item From a spectral point of view, even though $A$ admits compact resolvent (see Remark 
\ref{compact_resolvent}), its ordered eigenvalues $(\lambda_k)_{k\in\N^*}$
are not explicit up to very specific situations, unlike the Dirichlet Laplacian $-\Delta_D$ on 
$(0,1)$ 
(we refer to Remark 
\ref{autovalori espliciti} for further details on the subject). In general, a more complicated structure of the 
networks entails increased difficulties in determining the spectral behavior.

\item Finally, the techniques developed in the works 
$\cite{be1,laurent,mio1,mio2,morgane1}$ can not be directly applied for the \eqref{mainx1} on compact graphs 
$\Gi$.  Indeed, the only sure fact on $(\lambda_k)_{k\in\N^*}$ is that the 
uniform
spectral gap $\inf_{{k\in\N^*}}|\lambda_{k+1}-\lambda_k|> 0$ can be only guaranteed when $\Gi=(0,1)$. This 
hypothesis 
is crucial for the techniques developed in the mentioned works.

\end{itemize}}

The purpose of this work is threefold. First, we introduce a suitable mathematical framework where to consider 
the dynamics of the \eqref{mainx1} and we characterize the Sobolev's spaces 
$D(|A|^\frac{s}{2})$ with $s \geq 0$. Second, we study 
the well-posedness of the 
\eqref{mainx1} in such domains for very specific values of the parameter $s$. 
Third, we develop a new technique leading to the global exact controllability of the \eqref{mainx1} in these 
spaces.

Our main result yields the global exact contrability for bilinear quantum systems on network under fairly general
hypotheses on the spectrum $A$ and on the potential $B$. We infer the validity of the mentioned spectral 
assumptions for suitable networks shaped as: star graphs, double-ring graphs, tadpole 
graphs or two-tails tadpole graphs (Figure \ref{int}). In the 
following paragraph, we provide an explicit potential field $B$ acting on a star-shaped network so that the 
controllability is 
guaranteed. In Section \ref{tadpole}, we treat another specific problem on a network modeled by a
tadpole graph.

\begin{figure}[H]
	\centering
	\includegraphics[width=\textwidth-100pt,height=30pt]{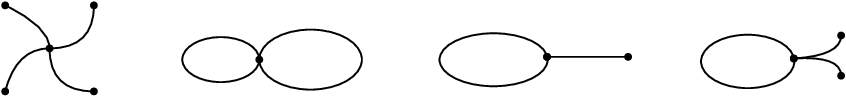}
	\caption{The figure respectively represents a star graph, a double-ring graph, a tadpole graph and a two-tails 
tadpole 
graph.}\label{int}
\end{figure}

\vspace{4mm}

\needspace{4mm}
{\noindent \bf \underline{Star-sha}p\underline{ed network}}

\vspace{2mm}

\noindent
Let us consider a network 
of $3$ branches connected in one node.  We can assume it is a structure of 
electrical wires or a system of wave-guides representing the so-called 
``branching points'' in the conjugated 
double-bounds organic molecules (see \cite{209} by Ruedenber and Scherr). 
We 
represent the network with a 
star-graph $\Gi$ composed by $3$ edges 
$\{e_1,e_2,e_3\}$. 
We denote by $v$ the internal vertex connecting all the edges of $\Gi$. We 
parametrize each $e_j$ 
with a coordinate going from $0$ to its length $L_j$ in $v$ (see Figure 
\ref{parametrizzazione}). 

\begin{figure}[H]
	\centering
	\includegraphics[width=\textwidth-100pt]{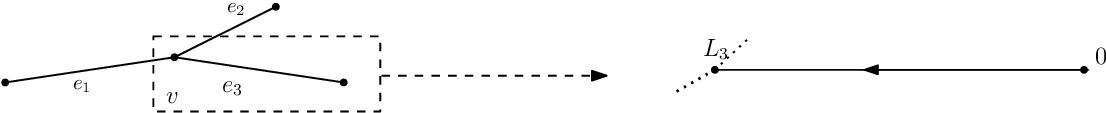}
	\caption{The figure shows the parametrization of a star graph with $3$ edges.}\label{parametrizzazione}
\end{figure}

We consider a particle trapped on such a 
network  and we represent it by a quantum state 
$\psi=(\psi^1,\psi^2,\psi^3)\in 
L^2(\Gi,\C):=\prod_{j=1}^3L^2(e_j,\C),$ where each $\psi^j:e_j\rightarrow \C$ describes the probability of
particle to be located in the edge $e_j$. We assume that an external field acts on the network as a 
sufficiently regular potential field $\mu$ localized on the 
branch represented by $e_1$ and with time-dependent intensity. We model the field as $u(t)B$ where
\begin{align}\label{potential_intro}u\in 
L^2((0,T),\R),\ \ \ \ \ \  \ \ \ B:\psi=(\psi^1,\psi^2,\psi^3)\in L^2(\Gi,\C)\longmapsto 
\big(\mu(x)\psi^1,0,0\big) .\end{align} 
The dynamics of the particle in the time $(0,T)$ is modeled by the
bilinear Schr\"odinger equation in 
$L^2(\Gi,\C)$
\begin{equation}\label{mainxS}\begin{split}\begin{cases}
i\dd_t\psi^1(t,x)=-\dd_x^2\psi^1(t,x)+u(t)\mu(x)\psi^1(t,x),\ \ \ \ \ \ \ \ &t\in(0,T),\ x\in (0,L_1),\\
i\dd_t\psi^2(t,x)=-\dd_x^2\psi^2(t,x),\ \ \ \ \ \ \ \ &t\in(0,T),\ x\in (0,L_2),\\
i\dd_t\psi^3(t,x)=-\dd_x^2\psi^3(t,x),\ \ \ \ \ \ \ \ &t\in(0,T),\ x\in (0,L_3),\\
\end{cases}\end{split}
\end{equation}
endowed with the following boundary conditions
\begin{equation}\label{mainxS_boundaries}\begin{split}
\psi^1(L_1)=\psi^2(L_2)=&\ \psi^3(L_3),  \ \ \ \ \ \  \ \ \ \ \ \
\dd_x\psi^1(L_1)+\dd_x\psi^2(L_2)+\dd_x\psi^3(L_3)=0,\\
&\psi^1(0)=\psi^2(0)=\psi^3(0)=0.\\
\end{split}
\end{equation}

The identities appearing in the first line of \eqref{mainxS_boundaries} model the peculiarity of the central 
node of 
the network to act zero resistence on the 
crossing particle (see \cite{209,204}) and they are encoded by Neumann-Kirchhoff 
boundary conditions. The last identities are the 
classical Dirichlet boundary conditions. Let 
$$H^s:=\prod_{k=1}^3H^s(e_k,\C)$$ for $s> 0$. We denote by $-\Delta$ the 
Laplacian operator appearing in \eqref{mainxS}-\eqref{mainxS_boundaries}, {\it i.e.} 
$$D(-\Delta)=\Big\{\psi=(\psi^1,\psi^2,\psi^3)\in H^2\ :\ \psi\ \text{verifies the boundary conditions
 }\eqref{mainxS_boundaries}\Big\}.$$ 
%
%
%
%
%

\begin{defi}
	Let $N\in\N^*$. We denote by $\AL\LL(N)$ the set of elements $\{L_j\}_{j\leq N}\in (\R^+)^N$ so that
$\big\{1,L_1,...,L_N\big\}$ are {linearly independent} over $\Q$ and all the ratios $L_k/L_j$ are 
{algebraic 
irrational numbers}.
\end{defi}
The set $\AL\LL(N)$ with $N\in\N^*$ contains the uncountable set of $\{L_{j}\}_{j\leq 
N}\in(\R^+)^N$ such
that each $L_j$ can be written in the form $t\widetilde L_j$ where all the ratios $\widetilde L_j/\widetilde L_k$ 
are algebraic irrational numbers and $t$ is a transcendental number. For instance, $\{\pi\sqrt{2},\pi\sqrt{3}\}$ 
belongs to $\AL\LL(2)$, while $\{\pi,\pi\sqrt{2},\pi\sqrt{3}\}$ 
to $\AL\LL(3)$.

\begin{teorema}\label{bim.1} 
Let $\mu:x\in (0,L_1)\longmapsto (x-L_1)^4$. There exists $\CC\subset (\R^+)^3$ countable such that, for every 
$\{L_1,L_2,L_3\}\in\AL\LL(3)\setminus \CC$, the dynamics of the system
\eqref{mainxS}-\eqref{mainxS_boundaries} is {globally exactly controllable} in
	$$H^{4+\epsilon}\cap D(|\Delta|^2),\ \ \ \ \ \ \ \ \epsilon>0.$$
In other words, let $\G_T^u$ be the unitary propagator associated to the dynamics of 
\eqref{mainxS}-\eqref{mainxS_boundaries} in the time interval $(0,T)$ with control $u\in L^2((0,T),\R)$. 
For every $\psi^1, \psi^2\in H^{4+\epsilon}\cap D(|\Delta|^2)$ such that 
$\|\psi^1\|_{L^2(\Gi,\C)}=\|\psi^2\|_{L^2(\Gi,\C)}$, 
$$\exists T>0,\ \ \ u\in L^2\big((0,T),\R\big)\ \ \ :\ \ \ \G_T^u\psi^1=\psi^2.$$
\end{teorema}

Theorem $\ref{bim.1}$ is proved in Section \ref{proofbim} and it is a consequence of the abstract result of
Theorem \ref{global}. There, the statement is guaranteed when specific hypotheses on the spectral behaviour 
of $A$ 
and on the field $B$ are satisfied.

 Theorem $\ref{bim.1}$ yields that the controllability of \eqref{mainxS}-\eqref{mainxS_boundaries} holds even 
though the external field 
only acts on the edge $e_1$. When the particle, represented by a state $\psi_0$, is 
(mostly) 
localized in $e_3$, it is 
still possible to move it to any other edge of the network by controlling the intensity of the field (Figure 
\ref{controllo_un_edge}).

\begin{figure}[H]
	\centering
	\includegraphics[width=\textwidth-100pt,height=70pt]{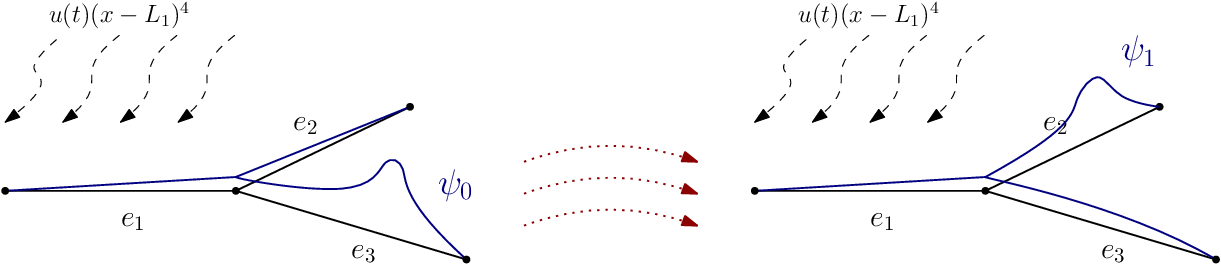}
	\caption{The figure represents an interesting application of Theorem \ref{bim.1}: it is possible to steer 
any state $\psi_0$, localized in $e_3$,  to any other state $\psi_1$, localized in $e_2$, by means of the dynamics 
of the 
bilinear 
Schr\"odinger equation \eqref{mainxS}-\eqref{mainxS_boundaries}, even though the potential field 
$u(t)\mu(x)=u(t)(x-L_1)^4$ only acts on the edge $e_1$.}\label{controllo_un_edge}
\end{figure}

\noindent
This peculiarity follows from the choice of $\{L_1,L_2,L_3\}\in\AL\LL(3)$. In this case, each bounded state 
of 
$i\dd_t\psi=-\Delta\psi$ is supported by the whole network (see Remark \ref{pallosso}) and then, it is
affected by the field acting on $e_1$. As a 
consequence, when we see $\psi_0$ as 
a superposition of 
bounded states, we realize that the control field ``see" the particle even 
though it is localized on a 
different edge from $e_1$.

\begin{osss} If $L_2/ 
L_3$ is
rational, then there exist 
eigenfunctions of $-\Delta$ vanishing in $e_1$. When $L_1/L_3$ is also 
rational, the spectrum of $-\Delta$ presents multiple eigenvalues (see Remark \ref{pallosso}). These are
obvious obstructions to the controllability of \eqref{mainxS}-\eqref{mainxS_boundaries} since the potential 
only acts on $e_1$. As a 
consequence, when we change the lengths of the branches of the network, it may happen that the 
controllability of \eqref{mainxS}-\eqref{mainxS_boundaries} is lost, even though the set of the "uncontrollable" 
lengths is 
countable.\end{osss}

\vspace{4mm}

\needspace{4mm}
{\noindent \bf \underline{Scheme of the work}}

\vspace{2mm}

\noindent
In Section $\ref{Sec2}$, we present the mathematical framework and the notations adopted in the work. 
In 
Section \ref{Sec3}, we state with Theorem \ref{global} our abstract global exact controllability result which we 
use to prove Theorem \ref{bim.1}. In the final part of this section, we also deal with a specific problem 
involving a tadpole graph.
In Section \ref{well}, we present some interpolation properties for the domains $D(|A|^\frac{s}{2})$ with $s>0$ 
and we ensure the well-posedness of the \eqref{mainx1}. 
In Section \ref{proofglobal}, we prove the abstract result of Theorem \ref{global} by extending a local 
exact controllability.
In {Appendix $\ref{numeri}$}, we ensure some spectral proprieties adopted in the manuscript. 
In {Appendix $\ref{momenti}$}, we provide a new technique leading to the solvability of the so-called {moment 
problem} appearing in the proof of the local exact controllability.
In {Appendix $\ref{analitics}$}, we develop some perturbation theory techniques.

\section{Preliminaries}\label{Sec2}
\subsection{Mathematical framework and notations}\label{notation}

Let $\Gi$ be a compact graph composed by $N\in\N^*$ edges $\{e_j\}_{j\leq N}$ of lengths $\{L_j\}_{j\leq N}$ and 
$M\in\N^*$ vertices $\{v_j\}_{j\leq M}$. For every vertex $v$, we denote 
\begin{align}\label{molteplicit}N(v):=\big\{l \in\{1,...,N\}\ |\ v\in 
e_l\big\},\ \ \  \ \ \ n(v):=|N(v)|.\end{align}
We call $V_e$ and $V_i$ the external and the internal vertices of 
the graph $\Gi$ (see Figure \ref{vertici}).
\begin{figure}[H]
	\centering
	\includegraphics[width=\textwidth-200pt, height=35pt]{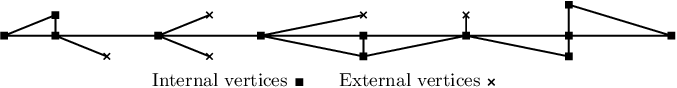}
	\caption{Internal and external vertices in a compact graph.}\label{vertici}
\end{figure}

We study graphs equipped with a metric, which parametrizes each edge $e_j$ with a coordinate going from $0$ to its 
length $L_j$. A graph is compact when it is composed by a finite number of vertices and edges of finite lengths. 
We 
consider functions $f:=(f^1,...,f^N):\Gi\rightarrow \C$ with domain a compact metric graph $\Gi$ so that 
$f^j:e_j\rightarrow \C$ for every $j\leq N$. We denote
$$L^2(\Gi,\C)=\prod_{j\leq N}L^2(e_j,\C).$$
The Hilbert space $L^2(\Gi,\C)$ is equipped with the norm $\|\cdot\|_{L^2}$ induced by the scalar product
$$\la\psi,\ffi\ra_{L^2}:=\sum_{j\leq N}\la\psi^j,\ffi^j\ra_{L^2(e_j,\C)}=\sum_{j\leq 
N}\int_{e_j}\overline{\psi^j}(x)\ffi^j(x)dx,\ \ \ \  \ \ \forall \psi,\ffi\in L^2(\Gi,\C).$$
For $s>0$, we define the spaces $$H^s=H^s(\Gi,\C):=\prod_{j=1}^N 
H^s(e_j,\C),\ \ \ \ \ \ \  h^s=\Big\{(x_j)_{j\in\N^*}\subset{\C}\ 
\big|\ \sum_{j=1}^{\infty}|j^s x_j|^2<\infty\Big\}.$$
We equip $h^s$ with the norm 
$\big\|(x_j)_{j\in\N^*}\big\|_{(s)}=\big(\sum_{j=1}^\infty|j^s x_j|^2\big)^\frac{1}{2}$ for every 
$(x_j)_{j\in\N^*}\in h^s$. Let $f\in H^1$ and $v$ be a vertex of $\Gi$ connected once to an edge $e_j$ with 
$j\leq N$. When the coordinate parametrizing $e_j$ in the vertex $v$ is equal to $0$ (resp. $L_j$), we denote
\begin{align}\label{NK1}\dd_x f^j(v)=\dd_xf^j(0),\ \ \  \ \ \ \ \ \big(\text{resp.}\ \dd_x 
f^j(v)=-\dd_xf^j(L_j)\big).\end{align}
When $e_j$ is a loop and it is connected to $v$ in both of its extremes, we use the notation 
\begin{align}\label{NK2}\dd_x 
f^j(v)=\dd_xf^j(0)-\dd_xf^j(L_j).\end{align} When $v$ is an external vertex and then $e_j$ is the only edge 
connected to $v$, we call $\dd_x f(v)=\dd_x f^j(v).$

In the bilinear Schr\"odinger equation $(\ref{mainx1})$, we consider the Laplacian $A$ being self-adjoint and we 
denote $\Gi$ as {quantum graph}. From now on, when we introduce a quantum graph $\Gi$, we implicitly define on 
$\Gi$ a self-adjoint Laplacian $A$. Formally, $D(A)$ is characterized by the following boundary conditions.

\needspace{3\baselineskip}
\begin{boundaries}
Let $\Gi$ be a compact quantum graph. 
\begin{itemize}
\item[($\Di$)] A vertex $v\in V_e$ is equipped with {Dirichlet} boundary conditions when $f(v)=0$ for every $f\in 
D(A)$.

\item[($\NN$)] A vertex $v\in V_e$ is equipped with {Neumann} boundary conditions when $\dd_xf(v)=0$ for every 
$f\in D(A)$.
 \item[($\NN\KK$)] A vertex $v\in V_i$ is equipped with {Neumann-Kirchhoff} boundary conditions when every $f\in 
D(A)$ is 
continuous in $v$ and $\sum_{j\in N(v)}\dd_x f^j(v)=0$ (according to the notations \eqref{NK1} and \eqref{NK2}).

\end{itemize}
\end{boundaries}

\needspace{1\baselineskip}
\begin{notations}Let $\Gi$ be a compact quantum graph. 
\begin{itemize}

\item The graph $\Gi$ is said to be equipped with ($\Di$) when the Laplacian $A=-\Delta$ in $L^2(\Gi,\C)$ 
is such that
$$D(A)=\{\psi\in H^2\ :\ \psi\text{ satisfies $(\NN\KK)$ in every $v\in V_i$ and ($\Di$) in 
each $v\in V_e$}\}.$$

\item The graph $\Gi$ is said to be equipped with ($\NN$) when the Laplacian $A=-\Delta$ in $L^2(\Gi,\C)$  is 
such that
$$D(A)=\{\psi\in H^2\ :\ \psi\text{ satisfies $(\NN\KK)$ in every $v\in V_i$ and ($\NN$) in 
each $v\in V_e$}\}.$$

\item The graph $\Gi$ is said to be equipped with ($\Di$/$\NN$) when the Laplacian $A=-\Delta$ in $L^2(\Gi,\C)$ is 
such that 
$$D(A)=\{\psi\in H^2\ :\ \psi\text{ satisfies $(\NN\KK)$ in every $v\in V_i$ and each $v\in V_e$ verifies ($\Di$) 
or 
($\NN$)}\}.$$
\end{itemize}
\end{notations}

\begin{osss}\label{compact_resolvent}
When the boundary conditions described above are satisfied, the Laplacian $A$ is self-adjoint (see \cite[Theorem\ 
3]{kuk}), it admits compact resolvent and then purely discrete spectrum (see \cite[Theorem\ 18]{kuk}).\end{osss}

We 
denote by 
$(\lambda_k)_{k\in\N^*}$ the ordered sequence of eigenvalues of $A$ and 
$(\phi_k)_{k\in\N^*}$ is a Hilbert basis of $L^2(\Gi,\C)$ composed by corresponding eigenfunctions. Let $[r]$ be 
the entire part of a number $r\in\R$. For $s>0$, we denote
\begin{equation*}
\begin{split}
H^s_{\NN\KK}:=\Big\{&\psi\in H^s(\Gi,\C)\ \Big|\ \dd_x^{2n}\psi\text{ is continuous in }v,\ 
 \forall n\in\N,\ n<\big[({s+1})/{2}\big],\ \forall v\in V_i;\\
& \sum_{j\in N(v)}\dd_{x}^{2n+1}\psi^j(v)=0,\ \forall n\in\N,\ n<\big[{s}/{2}\big],\  \forall v\in V_i\Big\},\\
H^s_{\Gi}&:=D(A^{{s}/{2}}),\ \ \ \ \ \ \ \ \ \ \ \ \ \ 
\|\cdot\|_{(s)}:=\|\cdot\|_{H^s_{\Gi}}=\Big(\sum_{k\in\N^*}\big|k^s\la\cdot,\phi_k\ra_{L^2}\big|^2\Big)^{\frac{1}{2
} } .\\
\end{split}
\end{equation*}

\subsection{Spectral properties}

The following proposition rephrases the results of $\cite[Theorem  \ 3.1.8]{bomber}$ and 
$\cite[Theorem  \ 3.1.10]{bomber}$. There, we denote 
by $\big(\lambda_k^{\widehat\Gi}\big)_{k\in\N^*}$ the ordered sequence of eigenvalues of 
$A$ on a compact quantum graph $\widehat\Gi$. 

\begin{prop}{$\cite[Theorem  \ 3.1.8]{bomber}$ \& $\cite[Theorem  \ 3.1.10]{bomber}$}\label{spazio} Let $\Gi$ be a 
compact quantum graph.
\smallskip

\noindent
{\bf 1)} Let $w$ be a vertex of $\Gi$. If $\Gi^{\Di}$is the graph obtained by 
changing the boundary conditions in $w$ with ($\Di$), then $$\lambda_k^{\Gi}\leq\lambda_k^{\Gi^{\Di}}\leq 
\lambda_{k+1}^{\Gi},\ \ \ \ \ \ \ \ k\in\N^*.$$

\noindent
{\bf 2)} Let $w$ and $v$  be two vertices of $\Gi$ equipped with ($\NN\KK$) or ($\NN$). If $\Gi '$ is the 
graph obtained by merging the 
vertices $w$ and $v$ of $\Gi$ in one unique vertex equipped with ($\NN\KK$), then 
$$\lambda_k^{\Gi}\leq\lambda_k^{\Gi 
'}\leq 
\lambda_{k+1}^{\Gi},\ \ \ \ \ \ \forall k\in\N^*.$$
\end{prop}

Let $\Gi$ a be compact graphs composed by $N\in\N^*$ edges. We define $\Gi^\Di$ the quantum graph obtained by 
imposing $(\Di)$ in each vertex of $\Gi$: $\Gi^\Di$ consists in $N$ disjoint intervals equipped with $(\Di)$. Let 
$\Gi^\NN$ be constructed from 
$\Gi$ by disconnecting each edge and by imposing $(\NN)$ in each vertex: $\Gi^\NN$ consists in $N$ disjoint 
intervals equipped with $(\NN)$. From Proposition $\ref{spazio}$, 
\begin{align}\label{spezzo}\lambda_{k-2N}^{\Gi^\NN}\leq\lambda_k^{\Gi}\leq\lambda_{k+M}^{\Gi^{\Di}},\ \ \ \ \ 
\forall k>2N.\end{align} The sequences 
$\lambda_k^{\Gi^\NN}$ and $\lambda_k^{\Gi^\Di}$ are respectively obtained by reordering 
$\big\{\frac{k^2\pi^2}{L_l^2}\big\}_{\overset{k\in\N^*}{l\leq N}}$ and 
$\big\{\frac{(k-1)^2\pi^2}{L_i^2}\big\}_{\overset{k\in\N^*}{i\leq N}}.$ 
	Now,
$$	\lambda_{l-2N}^{\Gi^\NN}\geq   \frac{(l-2N-1)^2\pi^2}{N^2\tilde m}\geq \frac{l^2\pi^2}{2^{2(2N+1)}N^2\tilde 
m},\ \ \ \ \ \  \ \ \ \lambda_{l+M}^{\Gi^\Di}\leq \frac{(l+M)^2\pi^2}{\hat m}\leq \frac{l^2 
2^{2M}\pi^2}{\hat 
m}$$
for $\tilde m=\max_{j\leq N}L_j^2$ and $\hat m=\min_{j\leq N}L_j^2$. Finally, the last two relations 
and \eqref{spezzo} lead 
to the following lemma.

\begin{lemma}\label{interessante}
Let $(\lambda_k)_{k\in\N*}$ be the eigenvalues of a self-adjoint Laplacian $A$ defined on a compact quantum 
graph 
equipped with ($\Di$), ($\NN$) or ($\Di/\NN$). There exist 
$C_1,C_2>0$ such that \begin{align*}C_1 k^2\leq \lambda_k\leq C_2 k^2,\ \ \ \ \ \ \ 
\forall k\geq 2.\end{align*}
\end{lemma}
When a compact quantum graph $\Gi$ is not equipped with $(\NN)$, we have
$0\not\in\sigma(A)$ (the spectrum of 
$A$) and, from Lemma \ref{interessante}, there holds $\|\cdot\|_{(s)}\asymp \||A|^\frac{s}{2}\cdot\|_{L^2}$, {\it 
i.e.} 
$$\exists C_1,C_2>0 \ \ \ :\ \ \ C_1\|\cdot\|_{(s)}\leq\||A|^\frac{s}{2}\cdot\|_{L^2}\leq C_2\|\cdot\|_{(s)}.$$ 
When $\Gi$ is equipped with $(\NN)$	, we have $0\in\sigma(A)$ and $\lambda_1=0$. There exists 
$c\in\R$ such that $0\not\in\sigma(A+c)$ and 
$$\|\cdot\|_{(s)}\asymp \||A+c|^\frac{s}{2}\cdot\|_{L^2}.$$
Now, from $\cite[Proposition\ 6.2]{wave}$, there exist $\MM\in\N^*$ and $\delta'>0$ such that 
$\inf_{{k\in\N^*}}|\sqrt\lambda_{k+\MM}-\sqrt\lambda_k|>\delta' \MM$ and
\begin{equation}\begin{split}\label{g13_aus}
\inf_{{k\in\N^*}}|\lambda_{k+\MM}-\lambda_k|\geq 
\sqrt{\lambda_{\MM+1}}\inf_{{k\in\N^*}}|\sqrt\lambda_{k+\MM}-\sqrt\lambda_k|\geq\delta\MM\\
\end{split}\end{equation}
for $\delta=\sqrt{\lambda}_{\MM+1}\delta'$. The relation \eqref{g13_aus} yields the following lemma.

\begin{lemma}\label{g13}
Let $(\lambda_k)_{k\in\N*}$ be the eigenvalues of a self-adjoint Laplacian $A$ defined on a compact quantum 
graph 
equipped with ($\Di$), ($\NN$) or ($\Di/\NN$). There exist $\delta>0$ and
$\MM\in\N^*$ such that 
\begin{equation*}\begin{split}
\inf_{{k\in\N^*}}|\lambda_{k+\MM}-\lambda_k|\geq \delta\MM.\\
\end{split}\end{equation*}
\end{lemma}

In the next proposition, we use Proposition \ref{spazio} in order to characterize the asymptotic behaviour of 
$(\lambda_k)_{k\in\N^*}$ when $\Gi$ is one of the graphs represented in Figure $\ref{int}$.

\begin{prop}\label{final_spectral} Let $\Gi$ be either a tadpole, a 
two-tails tadpole, a double-rings graph or a star graph with $N\leq 4$ edges. Let $\Gi$ be equipped with 
($\Di$/$\NN$). If $\{L_j\}_{j\leq N}\in\AL\LL(N)$, then, for every $\epsilon>0$, there exists $C>0$ such that
\begin{equation*}|\lambda_{k+1}-\lambda_k|\geq C  k^{-\epsilon},\ \ \ \ \ \ \ \ \forall k\in\N^*\end{equation*}
	\end{prop}
	
	Before providing the proof of Proposition \ref{final_spectral}, we introduce the following auxiliary 
result.

\begin{lemma}\label{figo}
Let $\{L_l\}_{l\leq N_1},\{\tilde L_i\}_{i\leq N_2}\subset \R$ with $N_1,N_2\in\N^*$.
	Let $\big(\lambda_k^1\big)_{k\in\N^*}$ and $\big(\lambda_k^2\big)_{k\in\N^*}$ be the two sequences of numbers 
obtained by reordering 
$\big\{\frac{k^2\pi^2}{L_l^2}\big\}_{\underset{l\leq N_1}{k,l\in\N^*}}$ and $\big\{\frac{k^2\pi^2}{\tilde 
L_i^2}\big\}_{\underset{i\leq N_2}{k,i\in\N^*}}$ respectively.
	When all the ratios ${\tilde L_i}/{L_l}$ are algebraic irrational numbers, for every $\epsilon>0$, there 
exists $C>0$ such that$$ |\lambda_{k+1}^1-\lambda_k^2|\geq{C}{k^{-\epsilon}},\ \ \ \ \ \forall k\in\N^*.$$
\end{lemma}
\begin{proof}
 See Appendix \ref{numeri}.\qedhere
\end{proof}

\begin{proof}[Proof of Proposition \ref{final_spectral}] Let $\Gi$ be a {tadpole graph} equipped with ($\Di$). We 
construct from $\Gi$ two quantum graphs $\Gi^\NN$ and $\Gi^\Di$ as follows (see the first line of Figure 
$\ref{decomposizione}$ for further details). Let 
$\Gi^{\NN}$ be the graph obtained by disconnecting the edge $e_1$, representing the ``head" of the tadpole, on one 
side. We impose ($\NN$) on the new external 
vertex of $e_1$ created by this procedure. Let $\Gi^{\Di}$ be 
obtained from $\Gi$ by imposing ($\Di$) on its internal vertex 
$v\in V_i$. 
We respectively denote by $\big(\lambda_k^{\Gi^{\Di}}\big)_{k\in\N^*}$ and 
$\big(\lambda_k^{\Gi^{\NN}}\big)_{k\in\N^*}$ 
the ordered sequences of eigenvalues in $\Gi^\Di$ and $\Gi^\NN$ which are obtained by reordering 
$\big\{\frac{k^2\pi^2}{L_j^2}\big\}_{\underset{j\in\{1,2\}}{k\in\N^*}}$ and $\big\{\frac{(2k-1)^2\pi^2}{4 
(L_1+L_2)^2}\big\}_{{k\in\N^*}}$. From Proposition $\ref{spazio}$, 
\begin{equation}\label{why}\lambda_k^{\Gi}\leq\lambda_k^{\Gi^{\Di}}\leq \lambda_{k+1}^{\Gi},\ \ \ \ \ \ \ \ \ \ \ 
\ \ \ \ \lambda_k^{\Gi}\leq\lambda_{k+1}^{\Gi^{\NN}}\leq \lambda_{k+1}^{\Gi},\ \ \ \ \ \ \  \ \ \ \ \ \forall 
k\in\N^*.\end{equation}
	If $\{L_1,L_2\}\in\AL\LL(2)$, then the ratios between the elements in $\{L_1,L_2, L_1+L_2\}$ are 
algebraic irrational numbers. Lemma $\ref{figo}$ ensures the 
existence of $C>0$ such that, for every $\epsilon>0$, there holds 
$$|\lambda_{k+1}^{\Gi}-\lambda_k^{\Gi}|\geq|\lambda_{k+1}^{\Gi^{\NN}}-\lambda_k^{\Gi^{\Di}}|\geq{Ck^{-\epsilon}},\ 
\ \ \ \ \ \forall k\in\N^*.$$
	The claim is guaranteed when $\Gi$ is a tadpole graph.
	 The same techniques are also valid when $\Gi$ is a {tadpole graph} equipped with ($\NN$), when $\Gi$ is a 
{two-tails tadpole graph}, a {double-rings graph} or a {star graph} with $N\leq 4$ edges. In every 
framework, we impose that $\{L_k\}_{k\leq N}\in\AL\LL(N)$. In Figure $\ref{decomposizione}$, we represent how to 
define $\Gi^\NN$ and $\Gi^{\Di}$ from the corresponding $\Gi$. \qedhere\end{proof}

\begin{figure}[H]
	\centering
	\includegraphics[width=\textwidth-50pt,height=135pt]{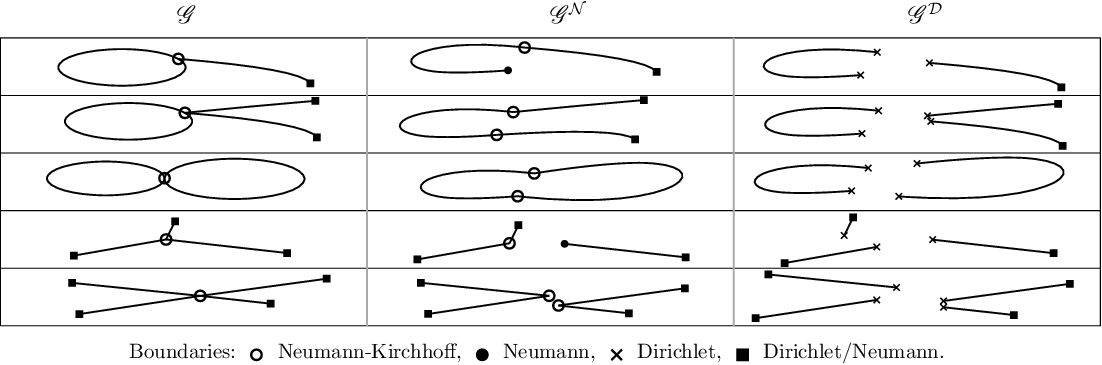}
	\caption{The figure represents the graphs described in the proof of Proposition $\ref{final_spectral}$. The 
first 
column  
shows the graphs $\Gi$ considered: tadpole, two-tails tadpole, double-rings graph, star graph with $N=3$ and star 
graph with $N=4$. The second and the third columns respectively provide the corresponding 
graphs $\Gi^{N}$ and 
$\Gi^{D}$.}\label{decomposizione}
\end{figure}

\section{Main results}\label{Sec3}
\subsection{Abstract controllability result}
Let $\eta>0$ and $a\geq 0$. We denote by $I$ the subset of 
$(\N^*)^2$ such that $I:=\{(j,k)\in(\N^*)^2:j< k\}$.

\needspace{3\baselineskip}
\begin{assumptionI}[$\eta$] 
	The bounded symmetric operator $B$ satisfies the following conditions.
	\noindent
	\begin{enumerate}
		\item There exists $C>0$ such that $|\la\phi_j,B\phi_1\ra_{L^2}|\geq\frac{C}{j^{2+\eta}}$ for every 
$j\in\N^*$.
		\item For $(j,k),(l,m)\in I$ such that $(j,k)\neq(l,m)$ and such that
		$\lambda_j-\lambda_k=\lambda_l-\lambda_m,$ we have
		$$\la\phi_j,B\phi_j\ra_{L^2}-\la\phi_k,B\phi_k\ra_{L^2}\neq 
\la\phi_l,B\phi_l\ra_{L^2}-\la\phi_m,B\phi_m\ra_{L^2}.$$
	\end{enumerate}
\end{assumptionI}

The first condition of Assumptions I quantifies how much $B$ mixes 
the eigenspaces associated to the eigenfunctions $(\phi_k)_{k\in\N^*}$. This 
assumption is crucial for the controllability. Indeed, when $B$ stabilizes such spaces, also $\G_T^u$, the unitary 
propagator associated to the \eqref{mainx1}, does the 
same 
and we can not expect to obtain controllability results. The second hypothesis is used to decouple some 
eigenvalues 
resonances appearing in the proof of the approximate controllability that we use in order to prove our main 
results. 

\needspace{3\baselineskip}
\begin{assumptionII}[$\eta,a$]
	Let $Ran(B|_{H^2_{\Gi}})\subseteq H^2_{\Gi}$ and one of the following assumptions be satisfied.
	\begin{enumerate}

		\item When $\Gi$ is equipped with ($\Di$/$\NN$) and $a+\eta\in(0, 3/2)$, there exists 
		$d\in[\max\{a+\eta,1\},3/2)$ such that $Ran(B|_{H_{\Gi}^{2+d}})\subseteq H^{2+d}\cap H^2_{\Gi}.$

		\item When $\Gi$ is equipped with ($\NN$) and $a+\eta\in(0, 7/2)$, there exist $d\in[\max\{a+\eta,2\},7/2)$ 
and $d_1\in(d,7/2)$ such that $Ran(B|_{H_{\Gi}^{2+d}})\subseteq H^{2+d}\cap H^{1+d}_{\NN\KK}\cap H^2_{\Gi}$ and 
$Ran(B|_{ H^{d_1}_{\NN\KK}})\subseteq H^{d_1}_{\NN\KK}.$

		\item When $\Gi$ is equipped with ($\Di$) and $a+\eta\in(0, 5/2)$, there exists 
$d\in[\max\{a+\eta,1\},5/2)$ such that $Ran(B|_{H_{\Gi}^{2+d}})\subseteq H^{2+d}\cap H^{1+d}_{\NN\KK}\cap 
H^2_{\Gi}.$ If $d\geq 2$, then there exists $d_1\in(d,5/2)$ such that $Ran(B|_{H^{d_1}})\subseteq H^{d_1}.$

	\end{enumerate}
\end{assumptionII}
Assumptions II calibrate the regularity of the control potential $B$ according to the choice of the 
boundary conditions defining $D(A)$ which affects the definition of the spaces $H^s_\Gi=D(|A|^\frac{s}{2})$ with 
$s>0$.

We are finally ready to present our main abstract controllability result for the \eqref{mainx1} on general 
networks. 

\begin{defi}
Let $\G_T^u$ be the unitary propagator associated to \eqref{mainx1} with $T>0$ and $u\in L^2((0,T),\R)$.
	The $(\ref{mainx1})$ is said to be globally exactly controllable in $H^s_\Gi$ with $s>0$ when, for every 
$\psi^1,\psi^2\in H^s_\Gi$ such that $\|\psi^1\|_{L^2}=\|\psi^2\|_{L^2}$, there exist $T>0$ and $u\in 
L^2((0,T),\R)$ such that $\G_T^u\psi^1=\psi^2.$
\end{defi}

\begin{teorema}\label{global} 
Let $\Gi$ be a compact quantum graph. Let $\MM\in\N^*$ be defined in Lemma \ref{g13}. Let exist $C>0$ and 
$\tilde d\geq 0$ such that
\begin{equation}\label{gapp}|\lambda_{k+1}-\lambda_k|\geq C  k^{-\frac{\tilde d}{\MM-1}},\ \ \ \ \ \ \ \ \forall 
k\in\N^*.\end{equation}
If the couple 
$(A,B)$ satisfies Assumptions I$(\eta)$ and Assumptions II$(\eta,\tilde d)$ for some $\eta>0$, then the 
$(\ref{mainx1})$ is globally exactly controllable in $H^{s}_{\Gi}$ for $s=2+d$ and $d$ from Assumptions II.
\end{teorema}
\begin{proof}
	See Section $\ref{proofglobal}$.\qedhere
\end{proof}

%

In the next proposition, we state an abstract global exact controllability result valid when $\Gi$ 
is one 
of the graphs represented in Figure $\ref{int}$. This 
result leads to Theorem $\ref{bim.1}$.

\begin{prop}\label{final} Let $\{L_j\}_{j\leq N}\in\AL\LL(N)$. Let $\Gi$ be either a tadpole, a two-tails 
tadpole, a double-rings graph or a star graph with $N\leq 4$ edges. Let $\Gi$ be equipped with ($\Di$/$\NN$). If 
the couple $(A,B)$ satisfies Assumptions I$(\eta)$ and Assumptions II$(\eta,\epsilon)$ for some $\eta,\epsilon>0$, 
then the $(\ref{mainx1})$ is globally exactly controllable in $H^{s}_{\Gi}$ for $s=2+d$ and $d$ from Assumptions 
II.
\end{prop}
	\begin{proof}
	 The claim follows by the validity of the spectral hypothesis of Theorem $\ref{global}$ due to 
Proposition \ref{final_spectral}.\qedhere
	\end{proof}

\begin{osss}\label{finalcoro}
	Let $\{L_j\}_{j\leq 2}\in\AL\LL(2)$. Proposition \ref{final_spectral} and then Proposition $\ref{final}$ are 
also 
valid when $\Gi$ is a two-tails tadpole with 
tails of length $L_2$ and head $L_1$. The same is true when $\Gi$ is a star graph 
with $3$ (or $4$) edges such that two edges are long $L_1$ and the remaining one (resp. ones) $L_2$.
\end{osss}

%
%
%
%
%

The size of the time in Theorem \ref{global} depends on the 
initial 
and the final states of the dynamics. This is due to the global approximate controllability result adopted in the 
proof of Theorem \ref{global}. Nevertheless, the local exact controllability (presented in Proposition \ref{lcl}), 
is valid for any $T>0$ (see 
Remark \ref{time} for further details).

\subsection{Proof of Theorem \ref{bim.1}}\label{proofbim}
\begin{proof}
	Let $\Gi$ be a star graph with $3$ edges of lengths $\{L_j\}_{j\leq 3}$ equipped ($\Di$). The ($\Di$) 
conditions on the external vertices imply that each eigenfunction $\phi_j$ with $j\in\N^*$ satisfies 
$\phi_j^l(0)=0$ for every $l\leq 3$. Then, 
$$\phi_j(x)=\big(a^1_j\sin(x\sqrt{\lambda}_j),a_j^2\sin(x\sqrt{\lambda}_j),a_j^3\sin(x\sqrt{\lambda}_j)\big)$$
	with $\{a_j^l\}_{l\leq 3}\subset\C$ such that $(\phi_j)_{j\in\N^*}$ forms a Hilbert basis of $L^2(\Gi,\C)$, 
{\it i.e.} \begin{align}\label{tremor1}\sum_{l\leq 
3}\int_{0}^{L_l}|a_j^l|^2\sin^2(x\sqrt{\lambda}_j)dx=\sum_{l\leq 
3}|a_j^l|^2\Big(\frac{L_l}{2}+\frac{\cos(L_l\sqrt{\lambda}_j)\sin(L_l\sqrt{\lambda}_j)}{2\sqrt{\lambda}_j}
\Big)=1.\end{align}
For every $j\in\N^*$, the ($\NN\KK$) condition in $V_i$ yields
	 \begin{equation}\label{tremor}\begin{split}
	 a^1_j\sin(\sqrt{\lambda}_jL_1)=...=a^3_j\sin(\sqrt{\lambda}_jL_3),\ \ \ \ \ \ 
\sum_{l\leq 3} 
a^l_j\cos(\sqrt{\lambda}_jL_l)=0,\\
	 \sum_{l\leq 3} \cot(\sqrt{\lambda}_jL_l)=0,\ \ \ \ \ \ \ \ \ \  \sum_{l\leq 
3}|a_j^l|^2{\sin(L_l\sqrt{\lambda}_j)\cos(L_l\sqrt{\lambda}_j)}=0.\\ 
	 \end{split}
	 \end{equation} 
	Now, \eqref{tremor1} and \eqref{tremor} ensure $1=\sum_{l=1}^3|a_j^l|^2{L_l}/{2}$. The continuity implies 
$a_j^l=a_j^1\frac{\sin(\sqrt{\lambda}_j 
L_1)}{\sin(\sqrt{\lambda}_j L_l)}$ for $l\neq 1$ and 
	\begin{equation}\label{pistola}
	\begin{split}
|a_j^1|^2\Bigg(L_1+\sum_{l=2}^3L_l\frac{\sin^2(\sqrt{\lambda}_j L_1)}{\sin^2(\sqrt{\lambda}_j L_l)}\Bigg)={2},\ \ 
\  \ \ \ \ \Longrightarrow\ \ \ \ \ \ |a_j^1|^2&=\frac{2\prod_{m\neq 1 }\sin^2(\sqrt{\lambda}_j L_m)}{\sum_{k=1}^3 
L_k\prod_{m\neq k}\sin^2(\sqrt{\lambda}_j L_m)}.
	\end{split}
	\end{equation}
	From $(\ref{tremor})$ and $(\ref{pistola})$, we have $\sum_{l=1}^3\cos(\sqrt{\lambda}_k L_l)\prod_{m\neq l 
}\sin(\sqrt{\lambda}_k L_m)=0$. The validity of $\cite[Proposition\ A.11]{wave}$ and Lemma   
\ref{interessante} ensure that, for every $\epsilon>0$, there exist $C_1,C_2>0$ such that, for every $j\in\N^*$,
	\begin{equation}
	\label{oip}
	\begin{split}
	|a_j^1|&=\sqrt{\frac{2}{\sum_{l=1}^3L_l\sin^{-2}(\sqrt{\lambda}_j 
L_l)}}\geq\sqrt{\frac{2}{\sum_{l=1}^3L_lC_1^{-2}{\lambda_j^{1+\epsilon}}}}\geq \frac{C_2}{j^{1+\epsilon}} .\\
	\end{split}
	\end{equation}
	{\bf 1) Validation of Assumptions I.1\ .} We notice $\la\phi_k^l, (B\phi_j)^l\ra_{L^2(e_j,\C)}=0$ for $l\neq 
1$ and $k,j\in\N^*$. Let
	$$
	a_j(x):=\frac{2\prod_{m\neq 1 }\sin^2(\sqrt{\lambda_j} L_m)}{\sum_{k=2}^4L_k\sin^2(\sqrt{\lambda_j} 
x)\prod_{{m\neq k, 1}}\sin^2(\sqrt{\lambda_j} L_m)+x\prod_{{m\neq 1}}\sin^2(\sqrt{\lambda_j} L_m)},$$
$$B_1(x):=\frac{-30 \sqrt{\lambda_1} x+20 \sqrt{\lambda_1}^3 x^3+4 \sqrt{\lambda_1}^5 x^5+15 \sin(2 
\sqrt{\lambda_1} x)}{40 \sqrt{\lambda_1}^5},$$
	\begin{equation*}
	\begin{split}B_j(x)&:=2\frac{ -6(\sqrt{\lambda_1}-\sqrt{\lambda_j}) x + (\sqrt{\lambda_1}-\sqrt{\lambda_j})^3 
x^3 + 6 \sin((\sqrt{\lambda_1}-\sqrt{\lambda_j}) x)}{(\sqrt{\lambda_1}-\sqrt{\lambda_j})^5}\\
	&-2\frac{ -6(\sqrt{\lambda_1}+\sqrt{\lambda_j}) x + (\sqrt{\lambda_1}+\sqrt{\lambda_j})^3 x^3 + 6 
\sin((\sqrt{\lambda_1}+\sqrt{\lambda_j}) x)}{(\sqrt{\lambda_1}+\sqrt{\lambda_j})^5}\\
	\end{split}\end{equation*}
with $j\in\N^*$. Each function $\widetilde B_j(\cdot):=\sqrt{a_1(\cdot)}\sqrt{a_j(\cdot)}B_j(\cdot)$ is 
non-constant and analytic in $\R^+$, while we notice that $B_{1,j}=\la\phi_1, B\phi_j\ra_{L^2}=\widetilde B_j(L_1)$ 
by calculation. The set of positive zeros $\tilde V_j$ of each $\widetilde B_j$ is a discrete subset of $\R^+$ and 
$\tilde V=\bigcup_{j\in\N^*}\tilde V_j$ is countable. For every $\{L_l\}_{l\leq 4}\in \AL\LL(4)$ such that 
$L_1\not\in \tilde V$, we have $|B_{1,j}|\neq 0$ for every $j\in\N^*$. Now, there holds $|B_{1,j}|\sim 
{|a_j|L_1\sqrt{\lambda_1}\sqrt{\lambda_j}}{(\lambda_j-\lambda_1)^{-2}}$ for every $j\in\N^*\setminus\{1\}.$	From 
Lemma $\ref{interessante}$ and the identity $(\ref{oip})$, the first point of Assumptions I($2+\epsilon$) is 
verified as, for each $\epsilon>0$, there exists $C_3>0$ such that $$|B_{1,j}|\geq 
\frac{C_3}{j^{4+\epsilon}},\ \ \ \ \ \forall j\in\N^*.$$

\noindent
{\bf 2) Validation of Assumptions I.2\ .} By calculation, we notice that $B_{j,j}=\la\phi_j, 
B\phi_j\ra_{L^2}=F_j(L_1)$ where 
\begin{equation*}
	\begin{split}
F_j(x)&:=a_j(x)\frac{-30 \sqrt{\lambda}_k x+20 \sqrt{\lambda}_k^3 x^3+4 \sqrt{\lambda}_k^5 x^5+15 \sin(2 
\sqrt{\lambda}_k x)}{40 \sqrt{\lambda}_k^5}.\\
	\end{split}
\end{equation*}	
 Let $(k,j),(m,n)\in I,\ (k,j)\neq(m,n)$ for 
$I$ defined above Assumptions I. For 
$F_{j,k,l,m}(x)=F_j(x)-F_k(x)-F_l(x)+F_m(x),$ it follows $F_{j,k,l,m}(L_1)=B_{j,j}-B_{k,k}-B_{l,l}+B_{m,m}$ and 
$F_{j,k,l,m}(x)$ is a non-constant analytic function for $x>0$. Furthermore $V_{j,k,l,m}$, the set of the positive 
zeros of $F_{j,k,l,m}(x)$, is discrete and $V:=\bigcup_{\underset{j\neq k\neq l\neq 
m}{j,k,l,m\in\N^*}}V_{j,k,l,m}$ 
is a countable subset of $\R^+$. For each $\{L_l\}_{l\leq 3}\in \AL\LL(3)$ such that $L_1\not\in V\cup \tilde V$, 
Assumptions I$(2+\epsilon)$ are verified.
	
	\medskip
	
	\noindent
	{\bf 3) Validation of Assumptions II.3 and conclusion.} We notice that $B$ stabilizes the spaces $H^2_\Gi$, 
$H^m$ and $H^m_{\NN\KK}$ for $m\in (0,9/2)$ since, for $n\in\N^*$ such that $n<5$, we have
$$\dd_x^{n-1}(B\psi)^1(L_1)=....=\dd_x^{n-1}(B\psi)^3(L_3)=0,\ \ \ \ \ \ \forall \psi\in H^n_{\NN\KK},$$ which 
implies 
$B\psi\in H^n_{\NN\KK}.$ The third point of Assumptions II($2+\epsilon_1,\epsilon_2$) is valid 
for each $\epsilon_1,\epsilon_2>0$ such that $\epsilon_1+\epsilon_2\in(0,1/2)$. From Proposition $\ref{final}$, 
the 
controllability holds in $H_{\Gi}^{4+\epsilon}$ with $\epsilon>0$. Finally, we note that 
$H_{\Gi}^{4+\epsilon}=H^{4+\epsilon}\cap H^4_\Gi$ (see Proposition \ref{bor} for further details).\qedhere
\end{proof}

\begin{osss}\label{stella_neumann}
The proof Theorem \ref{bim.1} can also be adapted for star-graphs equipped with 
($\NN$) or with ($\Di/\NN$). In such cases, we respectively have to use Proposition \ref{zuazua} and Remark 
\ref{zuazuafigo} instead of $\cite[Proposition\ A.11]{wave}$.
\end{osss}

\begin{osss}\label{pallosso}
Let us consider a three edges star graph $\Gi$ equipped with ($\Di$). The same observations can be done for star 
graphs of $N\in\N^*$ edges and equipped with ($\Di$/$\NN$)

\smallskip

\noindent
{\bf 1)} When $\{L_1,L_2,L_3\}\in\AL\LL(3)$, each 
eigenfuction 
$\phi=(\phi^1,\phi^2,\phi^3)$ of $A$ is such that $\phi^j\not\equiv 0$ for every $j\leq 3$. Indeed, if for 
instance 
$\phi^1\equiv 0$, then the continuity would imply $\phi^2(L_2)=\phi^3(L_3)=0$ and then 
the corresponding eigenvalue $\lambda$ would be the form $\lambda=\frac{n_1^2\pi^2}{L_2^2}$ and 
$\lambda=\frac{n_2^2\pi^2}{L_3^2}$ for suitable $n_1,n_2\in\N^*$ which is impossible.

\smallskip

\noindent
{\bf 2)} When $L_2/ 
L_3$ is
rational, there exist $n,m\in\N^*$ such that ${L_2}/{L_3}={n}/{m}$. In this case, there exist
eigenfuctions of $A$ of the form $(0, \sin(\sqrt\mu x),-\sin(\sqrt\mu x) )$ where $\mu = \frac{k^2 
m^2\pi^2 }{L_3^2}$ is the corresponding eigenvalue for some $k\in\N^*$. 
When also $L_1/L_3$ is 
rational, there exist $n',m',\in\N^*$ such that ${L_1}/{L_3}={n'}/{m'}.$ The sequence $\{\mu_k\}_{k\in\N^*}$ 
with $\mu_k=\frac{k^2m^2 m'^2\pi^2}{L_3^2}$ is composed by eigenvalues of $A$ and they are 
multiple. Indeed, fixed $k\in\N^*$,
$$f_k=\big(-2\sin(\sqrt\mu_k x),\sin(\sqrt\mu_k x),\sin(\sqrt\mu_k x)\big),\ \ \ \ \ \ \ \ 
g_k=\big(0,\sin(\sqrt\mu_k 
x),-\sin(\sqrt\mu_k x)\big)$$
are reciprocally orthogonal eigenfunctions of $A$ corresponding to $\mu_k$.
\end{osss}

\subsection{Controllability of a bilinear quantum system on a tadpole graph}\label{tadpole}

Another application of Theorem \ref{global} is the following. Let $\Gi$ be a {tadpole graph} composed by two 
edges $\{e_1,e_2\}$ connected in an internal vertex $v$. The edge $e_1$ is self-closing and parametrized in the 
clockwise direction with a coordinate going from $0$ to $L_1$ (the length $e_1$). On the \virgolette{tail} $e_2$, 
we consider a coordinate going from $0$ in the to $L_2$ and we associate the $0$ to the external vertex $\tilde v$. 

\begin{figure}[H]
	\centering
	\includegraphics[width=\textwidth-130pt]{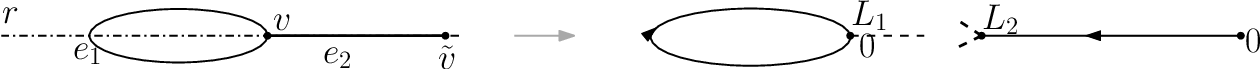}
	\caption{The parametrization of the tadpole graph and its symmetry axis $r$.}\label{parametrizzazione1}
\end{figure}

\begin{teorema}\label{bim.2} 
	Let $\Gi$ be a tadpole graph equipped with ($\Di$). Let $B:\psi\in L^2(\Gi,\C)\longrightarrow 
(\mu_1\psi^1,\mu_2\psi^2)$ with 
$$\mu_1(x):=\sin\big(\frac{2\pi}{L_1}x\big)+x(x-L_1),\ \ \ \  \ \mu_2(x):=x^2-(2L_1+2L_2)x+L_2^2+2L_1L_2.$$
There exists $\CC\subset (\R^+)^2$ countable so that, for each $\{L_1,L_2\}\in 
\AL\LL(2)\setminus\CC$, the $(\ref{mainx1})$ is {globally exactly controllable} in $H^{4+\epsilon}_{\Gi}$ with 
$\epsilon>0$.
\end{teorema}
\begin{proof} Let $r$ be the symmetry axis of $\Gi$  (see Figure \ref{parametrizzazione1}). We 
construct $(\phi_k)_{k\in\N^*}$ as a sequence of symmetric or skew-symmetric functions with respect to $r$.
If $\phi_k=(\phi_k^1,\phi_k^2)$ is skew-symmetric, then $\phi^2_k\equiv 0$, 
$\phi^1_k(0)=\phi^1_k(L_1/2)=\phi^1_k(L_1)= 0$ and $\dd_x\phi^1_k(0)=\dd_x\phi^1_k(L_1).$ We respectively denote 
by $$(f_k)_{k\in\N^*}=\left(\Big(\sqrt{\frac{2}{L_1}}\sin\Big(x\frac{2k\pi}{L_1}\Big),0\Big)\right)_{k\in\N^*},\ \ 
\ \ \  \ (\nu_k)_{k\in\N^*}:=\left(\frac{4 k^2\pi^2}{L_1^2}\right)_{k\in\N^*}$$
the skew-symmetric eigenfunctions belonging to the Hilbert base $(\phi_k)_{k\in\N^*}$ and the ordered sequence of 
corresponding eigenvalues. If $\phi_k=(\phi_k^1,\phi_k^2)$ is 
symmetric, then we have $\dd_x\phi^1_k(L_1/2)=0$ and $\phi_k^1(\cdot)=\phi_k^1(L_1-\cdot)$. The ($\Di$) conditions 
on $\widetilde v$ implies that the symmetric eigenfunctions corresponding to the eigenvalues 
$(\mu_k)_{k\in\N^*}$ are 
$$(g_k)_{k\in\N^*}:=\Big(\Big(a_k^1\cos\Big(\sqrt{\mu_k}\Big(x-\frac{L_1}{2}\Big)\Big),a_k^2\sin(\sqrt{\mu_k} 
x)\Big)\Big)_{k\in\N^*},\ \ \ \ \ \text{for}\ \ \ \ \  \{(a_k^1,a_k^2)\}_{k\in\N^*}\subset\C^2.$$
Now, we characterize the eigenvalues
$(\mu_k)_{k\in\N^*}$. The $(\NN\KK)$ conditions in $v$ ensure that
$a_k^1\cos(\sqrt{\mu_k}(L_1/2))=a_k^2\sin(\sqrt{\mu_k}L_2))$ and 
$2a_k^1\sin(\sqrt{\mu_k}(L_1/2))+a_k^2\cos(\sqrt{\mu_k}L_2))=0$. Finally, $(\mu_k)_{k\in\N^*}$ are the 
zeros of \begin{equation}\label{kirch}  
2\tan(\sqrt{\mu_k}(L_1/2))+\cot(\sqrt{\mu_k}L_2))=0.\end{equation}
The remaining part of the proof follows by the same argument of the one of Theorem \ref{bim.1}. The only 
difference 
is that we need to use Remark $\ref{zuazuafigo}$ instead of $\cite[Proposition\ A.11]{wave}$. \qedhere
\end{proof}

\begin{osss}\label{autovalori espliciti}
As showed in the proof of Theorem \ref{bim.1}, the study of the spectrum of $A$ on a $3$-edges star-graph equipped 
with ($\Di$) consists in seeking for $(\lambda_k)_{k\in\N}$ solving the first two identities of \eqref{tremor}. 
If for instance the lengths of the edges are equal to $L>0$, the eigenvalues are 
the zeros of $\sin(\sqrt{\lambda}L)$ and of $\cos(\sqrt{\lambda}L)$. However, in the general framework, the 
eigenvalues are obtained by solving $\sum_{l\leq 3}\cot(\sqrt{\lambda}L_l)=0$ which is a 
transcendental equation and then not always explicitly solvable. Similarly in Theorem \ref{bim.2}, some 
eigenvalues are 
the zeros of the transcendental equation \eqref{kirch}. The same observations is valid for other 
graphs (see 
\cite{wave} for further details).
\end{osss}

\section{Well-posedness and interpolation properties of the spaces $H^s_\Gi$}\label{well}

In the current section, we provide the {well-posedness} of the $(\ref{mainx1})$.

\begin{teorema}\label{laura}
Let $(A,B)$ satisfy Assumptions II$(\eta,\tilde d)$ with $\eta>0$ and $\tilde d\geq 0$. Let $\psi^0\in 
H^{2+d}_{\Gi}$ with $d$ introduced in Assumptions II. Let $u\in L^2((0,T),\R)$ with $T>0$. There
		exists a unique mild solution of ($\ref{mainx1}$) in
		$H^{2+d}_{\Gi}$, i.e. a function $\psi\in C_0([0,T],H^{2+d}_{\Gi})$ such that for every $t\in[0,T]$,
		\begin{equation}\label{form}
		\psi(t,x)=e^{-iA t}\psi^0(x)-
		i\int_0^t e^{-iA(t-s)}u(s)\psi(s,x)ds.
		\end{equation}
		Moreover, there exists $C=C(T,B,u)>0$ so that $\|\psi\|_{C^0([0,T],H^{2+d}_{\Gi})}\leq 
C\|\psi^0\|_{H^{2+d}_{\Gi}}$, while $\|\psi(t)\|_{L^2}=\|\psi^0\|_{L^2}$ for every $t\in[0,T]$ and $\psi_0\in 
H^{2+d}_{\Gi}.$
	\end{teorema}
Before proving Theorem 
$\ref{laura}$, we present some interpolation properties for the spaces $H^s_{\Gi}$ with $s\geq 0$ in the 
following proposition. In this result, we denote by $H^0_\Gi,\ H^0_{\NN\KK}$ and $H^0$ the Hilbert space 
$L^2(\Gi,\C)$.

%
%
\begin{prop}\label{bor}\text{ }
	
	\noindent
{\bf 1)} If the compact quantum graph $\Gi$ is equipped with ($\Di$/$\NN$), then 
$$H^{s_1+s_2}_{\Gi}=H_{\Gi}^{s_1}\cap H^{s_1+s_2} \ \ \ \text{for}\ \ \ s_1\in\N,\ s_2\in[0,1/2).$$
		
		\noindent
		{\bf 2)} If the compact quantum graph $\Gi$ is equipped with ($\NN$), then
		$$H^{s_1+s_2}_{\Gi}=H_{\Gi}^{s_1}\cap H^{s_1+s_2}_{\NN\KK} \ \ \ \text{for}\ \ \ s_1\in 2\N,\ 
s_2\in[0,3/2).$$
		
		\noindent
		{\bf 3)} If the compact quantum graph $\Gi$ is equipped with ($\Di$), then
		$$H^{s_1+s_2+1}_{\Gi}=H_{\Gi}^{s_1+1}\cap H^{s_1+s_2+1}_{\NN\KK} \ \ \ \text{for}\ \ \ s_1\in 2\N,\ 
s_2\in[0,3/2).$$
\end{prop}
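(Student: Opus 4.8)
The plan is to read all three identities off the complex-interpolation scale of the fractional powers of $A$, anchored at the even integer orders where $H^s_\Gi$ is explicit, and to control the appearance of boundary conditions through the trace-threshold principle of Lions--Magenes. If $0\in\sigma(A)$ I would first pass to $A+c$ as in Remark \ref{label}, so that $A$ is strictly positive, $H^s_\Gi=D(A^{s/2})$, and $\|\cdot\|_{(s)}\asymp\|A^{s/2}\cdot\|$. For a positive self-adjoint operator the spectral theorem gives, for $0\le a\le b$ and $\theta\in[0,1]$,
$$\big[H^a_\Gi,H^b_\Gi\big]_\theta=\big[D(A^{a/2}),D(A^{b/2})\big]_\theta=D\big(A^{((1-\theta)a+\theta b)/2}\big)=H^{(1-\theta)a+\theta b}_\Gi.$$
The base of the induction is the set of even orders: iterating $A\psi=-\dd_x^2\psi$ identifies $D(A^{k})$ with the functions of $H^{2k}$ satisfying the continuity of $\dd_x^{2n}\psi$ and the balance $\sum_{e\in N(v)}\dd_{x_e}^{2n+1}\psi(v)=0$ in the admissible orders at every $v\in V_i$, together with the external ($\Di$) or ($\NN$) condition, which is exactly the list recorded in the definition of $H^\bullet_{\NN\KK}$ supplemented by the leaf conditions. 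Writing each claimed identity in the form $H^{s}_\Gi=H^{a}_\Gi\cap H^{s}$ (resp. $\cap\,H^{s}_{\NN\KK}$) with $a$ the integer anchor below $s$, I would prove the three statements simultaneously by interpolating between consecutive even anchors bracketing $s$.

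The inclusion ``$\subseteq$'' is the easy half and I would dispose of it first: if $\psi\in H^{s}_\Gi=D(A^{s/2})$ then $\psi\in H^{a}_\Gi=D(A^{a/2})$ by monotonicity of the domains, while elliptic regularity on each edge (the vertex relations being finitely many linear trace constraints) gives $\|\psi\|_{H^{s}}\le C\big(\|A^{s/2}\psi\|+\|\psi\|\big)$, hence $\psi\in H^{s}$, resp. $\psi\in H^{s}_{\NN\KK}$. The whole content is the reverse inclusion with its norm bound, which, once the interpolation identity above is granted, reduces to the identification
$$\big[H^a_\Gi,H^{b}_\Gi\big]_\theta=H^a_\Gi\cap H^{(1-\theta)a+\theta b}\qquad\big(\text{resp. }H^a_\Gi\cap H^{(1-\theta)a+\theta b}_{\NN\KK}\big),$$
the plain scales interpolating as $[H^a,H^b]_\theta=H^{(1-\theta)a+\theta b}$ and likewise for $H^\bullet_{\NN\KK}$.

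To obtain this identification I would invoke the Lions--Magenes theory of interpolation of Sobolev spaces defined by boundary operators, whose governing principle is that a trace condition of order $m$ is felt by the interpolation space only after the smoothness exponent passes $m+\tfrac12$. The ranges of $s_2$ are chosen precisely so that the open window $\{(1-\theta)a+\theta b:\theta\in[0,1)\}$ crosses no such threshold: for ($\Di$/$\NN$) leaf data the next admissible order sits $1/2$ above $a$, forcing $s_2<1/2$ and leaving the \emph{plain} space $H^{s_1+s_2}$ on the right, whereas the even spacing of the active orders in the ($\NN$)--($\NN\KK$) and ($\Di$)--($\NN\KK$) cases opens a window of width $3/2$ while retaining the conditions encoded in $H^\bullet_{\NN\KK}$. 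I expect the main obstacle to be that the vertex relations are \emph{coupled} across the $n(v)$ edges incident to an internal vertex: continuity of the even derivatives is a matching condition and the Kirchhoff relation a single linear balance, so the one-dimensional Lions--Magenes statement cannot be applied edge by edge. To handle it I would localize near each vertex, model a neighbourhood by a star of half-lines, and recast the coupled continuity and Kirchhoff relations as a normal (complementing) system of boundary operators through a linear change of coordinates among the incident edges; then I would check that the corresponding trace maps are surjective with the expected kernels, so that imposing the boundary conditions commutes with complex interpolation throughout the stated windows. Assembling the two inclusions and their norm estimates, and inducting on the anchor to reach the non-even orders, would yield the three equalities of Proposition \ref{bor}.
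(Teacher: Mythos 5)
Your architecture coincides with the paper's: both compute $H^{s}_{\Gi}=D(A^{s/2})$ at non-integer orders by complex interpolation between integer anchors (the identity $[D(A^{a/2}),D(A^{b/2})]_{\theta}=D(A^{((1-\theta)a+\theta b)/2})$ is taken from \cite{schifo} in the paper), both split $\Gi$ by a partition of unity into small equal-edge stars around internal vertices and intervals around external ones, and both must decouple the Neumann--Kirchhoff coupling at an internal vertex. Where you diverge is in how the two hard steps are executed. The paper never invokes Lions--Magenes normal systems: on an equal-edge star $\Si$ it writes every eigenfunction componentwise as a multiple of an eigenfunction of either the Neumann interval $I^{\NN}$ or the mixed interval $I^{\MM}$, deduces $\psi\in H^s_{\Si}\Leftrightarrow\psi^l\in H^s_{I^{\NN}}\cap H^s_{I^{\MM}}$ for all $l$, and imports the interval characterizations (your trace-threshold statement) wholesale from \cite{boundaries}; the gluing is handled by first proving the product decomposition at the integer levels, where $D(A^{k})$ is explicit, and then interpolating the product via \cite[relation\ (12),\ Chapter\ 1.18.1]{schifo}. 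Your unitary change of coordinates at the vertex (the symmetric combination inherits a Neumann condition, its orthogonal complement Dirichlet conditions) is a clean alternative to the eigenfunction computation and in fact explains it, and the surjective-trace/retraction argument you gesture at is the standard substitute for quoting \cite{boundaries}. Be aware, however, that the two points you leave as announced checks are where essentially all the work lives: first, the compatibility of the localization with the global object $D(A^{s/2})$ at fractional $s$ is not automatic and is precisely what the paper's integer-level product decompositions are designed to secure before interpolating; second, in case \textbf{1)} the anchor $s_1$ ranges over all of $\N\cup\{0\}$, so for odd $s_1$ you must first identify $H^{s_1}_{\Gi}$ itself (e.g.\ $H^{3}_{\Gi}=H^{2}_{\Gi}\cap H^{3}$ together with the order-two vertex conditions), which does not follow from interpolating between even anchors unless you track which trace conditions activate at $s_1-\tfrac12$ and $s_1+\tfrac12$. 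Neither point is an error, but both must be carried out for the proposal to become a proof.
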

\begin{proof} {\bf 1)  Graph equipped with ($\Di$/$\NN$).}
We start by proving the first statement of Proposition \ref{bor}. 

	\smallskip

	\needspace{3\baselineskip}

	\noindent
	{\bf 1) (a) Preliminaries.} Let $I^\NN$ and $I^\Di$ be two quantum graphs defined on an interval $I$ of 
length $L$. We suppose that $I^\NN$ is equipped with $(\NN)$, while $I^\Di$ is equipped with $(\Di)$. 
From $\cite[Definition \ 2.1]{boundaries}$, for every $s_1\in 2\N,$ $s_2\in[0,3/2)$ and $s_3\in [0,1/2)$, we have 
	\begin{equation}\begin{split}\label{chenon}H^{s_1+s_2}_{I^{\NN}}=H_{I^{\NN}}^{s_1}\cap H^{s_1+s_2} 
(I^{\NN},\C), \ \ \  H^{s_1+s_2+1}_{I^{\Di}}=H_{I^{\Di}}^{s_1+1}\cap H^{s_1+s_2+1} (I^{\Di},\C), \ \ \ 
H^{s_3}_{I^{\Di}}=H^{s_3} (I^{\Di},\C).\\
	\end{split}\end{equation}
Let $\Gi=I^{\MM}$ be an interval equipped with $(\NN)$ in the external vertex parametrized with $0$ and with 
($\Di$) in the other. We prove
	\begin{equation}\label{chenon2}
	H^{s_1+s_2}_{I^{\MM}}=H_{I^{\MM}}^{s_1}\cap H^{s_1+s_2} (I^{\MM},\C),\ \ \ \ \ \ \forall s_1\in \N,\ 
s_2\in[0,1/2).\\
	\end{equation}

\noindent
Let $\widetilde{I}^{\Di}$ and $\widetilde{I}^{\NN}$ respectively be two sub-intervals of $ {I}^{\MM}$ of length 
$\frac{3}{4}L$. The interval $\widetilde{I}^{\Di}$ contains one external vertex of $ {I}^{\MM}$, while 
$\widetilde{I}^{\NN}$ contains the other. We consider both the intervals as quantum graphs: $\widetilde{I}^{\Di}$  
is 
equipped in both the external vertices with $(\Di)$ and $\widetilde{I}^{\NN}$ is equipped with $(\NN)$.
Fixed $s>0$, $$H^s(I^\MM,\C)=H^s({\widetilde I^\Di},\C)\times H^s({\widetilde 
I^\NN},\C),\ \ \ \ \ \ \ \ \ L^2(I^\MM,\C)=L^2({\widetilde I^\Di},\C)\ \times\ L^2({\widetilde I^\NN},\C).$$
	Let $[\cdot , \cdot ]_{\theta}$ be the complex interpolation of spaces for $0<\theta<1$ defined in 
$\cite[\,Definition,\ Chapter\ 1.9.2]{schifo}$. From
	$\cite[\,Chapter\ 1.15.1\ \&\ Chapter\ 1.15.3]{schifo}$, for $s_2\in[0,1/2)$, we have $\big[L^2(\widetilde 
I^\NN,\C), H^2_{\widetilde I^\NN}\big]_{s_2/2}=H^{s_2}_{\widetilde I^\NN}$ and $\big[L^2(\widetilde 
I^\Di,\C), H^2_{\widetilde I^\Di}\big]_{s_2/2}=H^{s_2}_{\widetilde I^\Di}.$
	Thanks to $\cite[\,relation\ (12),\ Chapter\ 1.18.1]{schifo}$, we have $$\Big[L^2(\widetilde I^\NN,\C)\ 
\times\ L^2(\widetilde I^\Di,\C), H^2_{\widetilde I^\NN}\  \times\ H^2_{\widetilde 
I^\Di}\Big]_{{s_2}/2}=\Big[L^2(\widetilde I^\NN,\C),H^2_{\widetilde I^\NN}\Big]_{{s_2}/2}\ \times\ 
\Big[L^2(\widetilde I^\Di,\C), H^2_{\widetilde I^\Di}\Big]_{{s_2}/2},$$
	\begin{equation*}\begin{split}&\Longrightarrow\ \ \ \  H^{s_2}_{I^\MM}=\Big[L^2(I^\MM,\C), 
H^2_{I^\MM}\Big]_{{s_2}/2}=\Big[L^2(\widetilde I^\NN,\C),H^2_{\widetilde I^\NN}\Big]_{{s_2}/2}\ \times\ 
\Big[L^2(\widetilde I^\Di,\C), H^2_{\widetilde I^\Di}\Big]_{{s_2}/2}=H^{s_2}_{\widetilde I^\NN}\ \times\ 
H^{s_2}_{\widetilde I^\Di}.\end{split}\end{equation*}
Equivalently, $H^{s_1+s_2}_{I^\MM}=H^{s_1+s_2}_{\widetilde I^\NN}\times H^{s_1+s_2}_{\widetilde I^\Di}$ for every 
$s_1\in\N^*$ and $s_2\in[0,1/2)$ which leads to $(\ref{chenon2})$ thanks to $(\ref{chenon})$.

	\medskip

	\needspace{3\baselineskip}

	\noindent
	{\bf 1) (b) Sobolev's spaces for star graphs with equal edges.}  Let $I^\NN$ and $I^\MM$ be defined as in {\bf 
1) (a)}. We 
respectively call $A_\NN$ and $A_\MM$ the two self-adjoint Laplacians defining $I^\NN$ and $I^\MM$. Let 
$(f_j^{1})_{j\in\N^*}$ be a Hilbert basis of $L^2(I,\C)$ made by eigenfunctions of $A_\NN$ and 
$(f_j^{2})_{j\in\N^*}$ a Hilbert basis of $L^2(I,\C)$ composed by eigenfunctions of $A_\MM$. Let $\Gi$ be a 
star graph of $N$ edges long $L$ and equipped with ($\NN$). 
	The ($\NN$) conditions on $V_e$ imply that 
$$\phi_k=(a^1_k\cos(x\sqrt{\lambda}_k),...,a_j^N\cos(x\sqrt{\lambda}_k)),\ \ \ \ \forall k\in\N^*$$ where 
$\lambda_k$ is the corresponding eigenvalue and for suitable $\{a_k^l\}_{l\leq N}\subset\C$. 
The ($\NN\KK$) condition in $V_i$ ensures that $$\sin(\sqrt{\lambda}_kL)\sum_{l\leq 
N}a^l_k=0,\ \ \ \ \ \ \ \ a^1_k\cos(\sqrt{\lambda}_kL)=...=a^N_k\cos(\sqrt{\lambda}_kL),\ \ \ \ \ \forall 
k\in\N^*.$$

\vspace{-2mm}

\noindent
Thus, each eigenvalue is either of the form $\frac{(n-1)^2\pi^2}{L^2}$ when $\sum_{l\leq N}a^l_k\neq 
0$, or $\frac{(2n-1)^2\pi^2}{4L^2}$ when $\sum_{l\leq N}a^l_k=0$ for suitable $n\in\N^*$. 
	For every $k\in\N^*$, there exists $j(k)\in\N^*$ such that 
	\begin{equation}\begin{split}\label{iu} \phi_k^l\ \ \ \text {   is equal  either to  }\ \ \ c_k^l 
f^1_{j(k)},\ \ \ \text{   or  to }\ \ \ c_k^l f^2_{j(k)}\ \ \ \text{      with      }\ \ \ c_k^l\in\C,\  
\ |c_k^l|\leq 1,\ \ \ \forall l\in\{1,...,N\}.\\
	\end{split}\end{equation}
	In addition, for each $k\in\N^*$ and $m\in\{1,2\}$, there exist $\widetilde j(k)\in\N^*$ and $l\leq N$ such 
that $f^m_{k}=c_{\widetilde j(k)}^{l}\phi_{\widetilde j(k)}^{l}$ with $c_{\widetilde j(k)}^{l}\in\C$ uniformly 
bounded in $k\in\N^*$ and $l\leq N$. The last identity and $(\ref{iu})$ tell that the components of the 
elements $(\phi_k)_{k\in\N^*}$ are elements of $(f_j^{1})_{j\in\N^*}$ and $(f_j^{2})_{j\in\N^*}$ and vice versa. 
Thus, 
$\psi=(\psi^1,...,\psi^N)\in H^s_\Gi$ if and only if $\psi^l\in 
H^s_{I^\NN}\cap H^s_{I^\MM}$ for every $l\leq N.$ 

\medskip
	
	\needspace{3\baselineskip}

	\noindent
	{\bf 1) (c) Conclusion.} Let $\Gi$ be equipped with ($\Di$/$\NN$) and $\widetilde 
L<\min\{L_k/2:k\in\{1,...,N\}\}$.
		Let $n(v)$ be defined in $(\ref{molteplicit})$ for every $v\in V_e\cup V_i$. We define the graphs 
$\widetilde \Gi(v)$ for every $v\in V_i\cup V_e$ and the intervals $\{I_j\}_{j\leq N}$ as follows (see Figure 
$\ref{spezzino}$ for an explicit example). If $v\in V_i$, then $\widetilde \Gi(v)$ is a star sub-graph of $\Gi$ 
equipped with ($\NN$) and composed by $n(v)$ edges long $\widetilde L$ and connected to the internal vertex $v$. 
If 
$v\in V_e$, then $\widetilde \Gi(v)$ is an interval long $\widetilde L$ such that the external vertex $v$ is 
equipped with the same boundary conditions that $v$ has in $\Gi$. We impose $(\NN)$ on the other vertex. 
Let $v,\hat v\in V_e\cup V_i$ be such that $v, \hat v \in {e_1}.$ Now, the graphs 
$\widetilde\Gi(v)$ and $\widetilde\Gi(\hat v)$ have respectively two 
external vertices $w_1$ and $w_2$ lying on the same edge $e_1$ and such that $w_1\not\in\widetilde\Gi(\hat v)$. We 
construct an interval $I_1$ strictly containing $w_1$ and $w_2$, strictly contained in $e_1$ and equipped with 
($\NN$). We repeat the procedure for every edge $e_j$ with $j\leq N$ and we define the intervals 
$\{I_j\}_{j\leq N}$.  
	\begin{figure}[H]
	\centering
	\includegraphics[width=\textwidth-150pt]{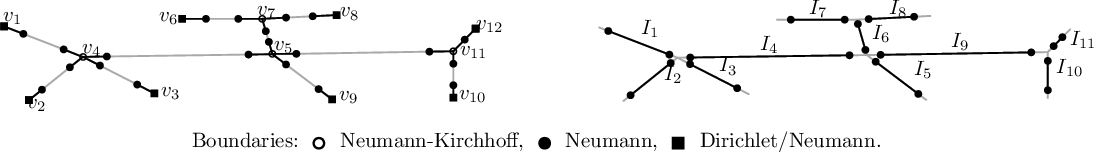}
	\caption{The left and the right figures respectively represent the graphs $\{\widetilde \Gi(v)\}_{v\in V_i\cup 
V_e}$ and the intervals $\{I_j\}_{j\leq N}$ for a given graph $\Gi$. }\label{spezzino}
\end{figure}

\noindent
From {\bf 1) (a)} and {\bf 1)  (b)}, for every $v\in V_i\cup V_e$, $j\leq N$, $s_1\in \N$ and $s_2\in[0,1/2)$, we 
have 
the validity of the identities $H^{s_1+s_2}_{\widetilde \Gi(v)}=H_{\widetilde \Gi(v)}^{s_1}\cap H^{s_1+s_2} 
(\widetilde \Gi(v),\C)$ and $H^{s_1+s_2}_{I_j}=H_{I_j}^{s_1}\cap H^{s_1+s_2} (I_j,\C).$
	We notice that $G:=\{\widetilde\Gi(v_j)\}_{ j\leq M}\cup\{I_j\}_{j\leq N}$ covers $\Gi$. As in {\bf 1) (a)}, 
we 
see each function of domain $\Gi$ as a vector of functions of domain $G_j$ with $j\leq M+N$. The first relation of 
Proposition \ref{bor} is proved by adopting $\cite[relation\ (12),\ Chapter\ 1.18.1]{schifo}$ as in  
{\bf 1) (a)}.

	\smallskip
	\needspace{3\baselineskip}
	
	\noindent
	
	\noindent
	{\bf 2) Graphs equipped with ($\NN$).}  Let $\Gi$ be equipped with ($\NN$) and $N_e=|V_e|$. 
We 
consider $\{\widetilde \Gi(v)\}_{v\in V_e}$ introduced in {\bf 1) (c)} and we define $\widetilde \Gi$ from $\Gi$ 
as 
follows (see Figure $\ref{spezzino1}$). 
	For every $v\in V_e$, we remove from the edge including $v$, a section of length $\widetilde L/2$ containing 
$v$. We equip the new external vertex with ($\NN$).
	\begin{figure}[H]
		\centering
		\includegraphics[width=\textwidth-150pt]{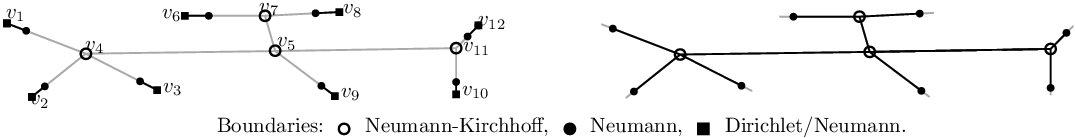}
		\caption{The left and the right figures respectively represent the graphs $\{\widetilde \Gi(v)\}_{v\in 
V_e}$ and $\widetilde \Gi$ for a given graph $\Gi$.}\label{spezzino1}
	\end{figure}

	\noindent
	We call  $G':=\{G'_j\}_{j\leq N_e+1}:=\{\widetilde\Gi(v)\}_{v\in V_e}\cup\{\widetilde\Gi\}$ which covers 
$\Gi$. 
For every $s_1\in 2\N,\ s_2\in[0,3/2)$, we have $H^{s_1+s_2}_{\widetilde\Gi(v)}=H_{\widetilde\Gi(v)}^{s_1}\cap 
H^{s_1+s_2}$ from $(\ref{chenon})$. Now,
$H^{s_1+s_2}_{\NN\KK}=H^{s_1+s_2}_{\widetilde\Gi}\ \times \prod_{v\in V_e}H^{s_1+s_2}(\widetilde\Gi(v),\C)$. The 
second relation of Proposition \ref{bor} follows from the arguments of {\bf 1) (a)}.

	\smallskip
	\needspace{3\baselineskip}
	
	\noindent
	
	\noindent
	{\bf 3) Graphs equipped with ($\Di$).}
The third point of Proposition 
\ref{bor} is proved as the second by considering $\{\widetilde\Gi(v)\}_{v\in V_e}$ as intervals equipped with 
$(\Di)$ and $\widetilde\Gi$ equipped with $(\Di)$ in its external vertices.\qedhere
\end{proof}

We are finally ready to prove Theorem \ref{laura} which follows from Proposition \ref{bor} and from the 
following 
auxiliary result.

\begin{lemma}\label{uppp}
	Let $(\lambda_k)_{k\in\N*}$ be the eigenvalues of a self-adjoint Laplacian $A$ defined on a compact quantum 
graph 
equipped with ($\Di$), ($\NN$) or ($\Di/\NN$). For every $T>0$, there 
exists $C(T)>0$ uniformly bounded for $T$ lying on bounded intervals such that
	$$\forall g\in L^2((0,T),\C),\ \ \ \ \ \  \ \left\|\int_0^T 
	e^{i\lambda_{(\cdot)} s}g(s) ds\right\|_{\ell^2}\leq C(T)\|g\|_{L^2((0,T),\C)}.$$
\end{lemma}
\begin{proof}
The result is a consequence of Proposition \ref{uppp123} which validity is ensured by Lemma
\ref{g13}.\qedhere
\end{proof}

\begin{proof}[Proof of Theorem $\ref{laura}$]	\noindent
	{\bf 1) Preliminaries. }  Let $T >0$ and the function $f$ be such that $f(s)\in H^{2+d}\cap 
H^{1+d}_{\NN\KK}\cap H^2_{\Gi}$ for almost every $s\in (0,T)$. We introduce 
$$G(\cdot):=\int_0^{(\cdot)}e^{iA\tau} f(\tau) d\tau. $$ In the first part of the proof, we prove that $G \in 
C^0([0,T], H^{2+d}_{\Gi})$ by ensuring the existence of $C(T)>0$ uniformly bounded for $T$ lying on bounded 
intervals such that $\|G\|_{L^{\infty}((0,T),H^{2+d}_{\Gi})}\leq C(T)\| f\|_{L^2((0,T),H^{2+d})}.$
To the purpose, we distinguish the different frameworks described by Assumptions II.

	\smallskip
	
	\needspace{3\baselineskip}

	\noindent
	{\bf 1) (a) Under Assumptions II.1 .} Let $f(s)\in H^{3}\cap H_{\Gi}^2$ for almost every $s\in (0,T)$ 
and $f(s)=(f^1(s),...,f^N(s))$. We prove that $G\in C^0([0,T], H^3_{\Gi})$. Let $t\in(0,T)$. The definition of 
$G(t)$ implies
	\begin{equation}\label{formuletta}\begin{split}
	G(t)=\sum_{k=1}^\infty\phi_k \int_0^t e^{i\lambda_ks}\la\phi_k,f(s)\ra_{L^2} ds,\ \ \ \ \ \  \ \ \ 
\|G(t)\|_{(3)}=\Big(\sum_{k\in\N^*}\Big|k^3\int_0^t e^{i\lambda_ks}\la\phi_k,f(s)\ra_{L^2} 
ds\Big|^2\Big)^\frac{1}{2}.\\
	\end{split}\end{equation}
	We estimate $\la\phi_k,f(s,\cdot)\ra_{L^2}$ for each $k\in\N^*$ and $s\in(0,t)$. We suppose that $\lambda_1\neq 
0$. Let $\dd_x f(s)=(\dd_x f^1(s),...,\dd_x f^N(s)$ be the derivative of $f(s)$. We call $\dd e$ the two points 
composing the boundaries of an edge 
$e$. For every $v\in V_e$, $\tilde v	\in V_i$ and $j\in N(\tilde v)$, there exist $a(v),a^j(\tilde 
v)\in\{-1,+1\}$ such that 
	\begin{equation}\label{compu}\begin{split}
	&\la\phi_k,f(s)\ra_{L^2}
	=
	\frac{1}{\lambda_k^2 }\int_\Gi\dd_{x}\phi_k(y)\dd_{x}^3f(s,y)dy+\frac{1}{\lambda_k^2}\sum_{v\in V_i\cup V_e} 
	\sum_{j\in N(v)}a^j(v)\dd_{x}\phi_k^j(v)\dd_{x}^2f^j(s,v).\\
	\end{split}\end{equation}
	From Lemma \ref{interessante}, there exists $C_1>0$ such that $\lambda_k^{-2}\leq C_1 k^{-4}$ for 
every 
$k\in\N^*$ and
	\begin{equation}\label{comput}\begin{split}
	\left|k^3\int_0^t e^{i\lambda_ks}\la\phi_k,f(s)\ra_{L^2} ds\right|
	\leq\frac{C_1}{k}\Bigg( &\sum_{v\in V_i\cup V_e} 
	\sum_{j\in N(v)}\left|\dd_{x}\phi_k^j(v)\int_0^t e^{i\lambda_ks}\dd_{x}^2f^j(s,v)ds\right|\\
	&+
	\left|\int_0^t e^{i\lambda_ks}\int_\Gi\dd_{x}\phi_k(y)\dd_{x}^3f(s,y)dyds\right|\Bigg).\\
	\end{split}\end{equation}
	\begin{osss}\label{qqq}
		We point out that $A'\lambda_k^{-1/2}\dd_{x}\phi_k=\lambda_k\lambda_k^{-1/2}\dd_{x}\phi_k$ for every 
$k\in\N^*,$ where $A'=-\Delta$ is a self-adjoint Laplacian with compact resolvent. 
		Thus, 
$\|\lambda_k^{-1/2}\dd_{x}\phi_k\|_{L^2}^2=\la\lambda_k^{-1/2}\dd_{x}\phi_k,\lambda_k^{-1/2}\dd_{x}\phi_k\ra_{L^2}
=\la \phi_k,\lambda_k^{-1}A\phi_k\ra_{L^2}=1$ and then $\big(\lambda_k^{-1/2}\dd_{x}\phi_k\big)_{k\in\N^*}$ is a 
Hilbert basis of $L^2(\Gi,\C)$.
	\end{osss}

	\noindent
	Let ${\bf a^l}=(a_k^l)_{k\in\N^*},{\bf b^l}=(b_k^l)_{k\in\N^*}\subset\C$ for $l\leq N$ be so that 
$\phi_k^l(x)=a_k^l\cos(\sqrt{\lambda}_kx)+ b_k^l\sin(\sqrt{\lambda}_kx)$ and $-a_k^l\sin(\sqrt{\lambda}_kx)+ 
b_k^l\cos(\sqrt{\lambda}_kx)=\lambda_k^{-1/2}\dd_{x}\phi_k^l(x).$ There holds ${\bf a^l},{\bf 
b^l}\in\ell^\infty(\C)$ since $$2\geq 
\|\lambda_k^{-1/2}\dd_{x}\phi_k^l\|_{L^2(e^l)}^2+\|\phi_k^l\|_{L^2(e^l)}^2=(|a_k^l|^2+|b_k^l|^2)|e_l|,\ \ \  \ \ \ 
\ \forall k\in\N^*,\ l\leq  N.$$ Thus, there exists $C_2>0$ so that, for every $k\in\N^*$ and $v\in V_e\cup V_i$, 
we have $|\lambda_k^{-1/2}\dd_{x}\phi_k(v)|\leq C_2$. From the validity of the relations $(\ref{formuletta})$ and 
$(\ref{comput})$, it follows
	\begin{equation*}\begin{split}
	\|G(t)\|_{(3)}&\leq C_1C_2\sum_{v\in V_e\cup V_i}\sum_{j\in N(v)}\Big\|\int_0^t\dd_{x}^2 f^j(s,v)e^{i \lambda_{(\cdot)}s}ds \Big\|_{\ell^2}+C_1\Big\|\int_0^t\big\la {\lambda_{(\cdot)}^{-1/2}}\dd_x\phi_{(\cdot)}(s),\dd_{x}^3 f(s)\big\ra_{L^2} e^{i \lambda_{(\cdot)}s}ds\Big\|_{\ell^2}.
	\end{split}\end{equation*}
	The last relation, Lemma $\ref{uppp}$ and Remark \ref{qqq} ensure the existence of $C_3(t),C_4(t)>0$ 
uniformly bounded for $t$ 
in bounded intervals such that
	
	\begin{equation}\label{GG2}\begin{split}
	\|G\|_{H^3_{\Gi}} 
	&\leq C_3(t)\sum_{v\in V_e\cup V_i}\sum_{j\in N(v)}\|\dd_{x}^2 f^j(\cdot,v)\|_{L^2((0,t),\C)}+\sqrt{t}\|f\|_{L^2((0,t),H^3)}\leq C_4(t)\|f(\cdot,\cdot)\|_{L^2((0,t),H^3)}.
	\end{split}\end{equation}
We underline that the identity is also valid when $\lambda_1=0$, which is proved by isolating the term with $k=1$ and by repeating the steps above.
For every $t\in [0,T]$, the inequality $(\ref{GG2})$ shows that $G(t)\in H^3_\Gi$. The provided upper bounds are 
uniform and the Dominated Convergence Theorem leads to $G\in C^0([0,T], H^3_{\Gi}).$
When $f(s)\in H^{5}\cap H_{\Gi}^4$ for almost every $s\in (0,T)$, the techniques just adopted 
leads to $G\in C^0([0,T], H^5_{\Gi}).$ 
	\smallskip

	Let $F(f)(t):=\int_0^{t}e^{iA\tau} f(\tau) d\tau$. The first part of the proof implies
	$$F: L^2((0,T),H^{3}\cap H_{\Gi}^2)\longrightarrow C^0([0,T], H^3_{\Gi}),\ \ \ \ \ \ \ F: L^2((0,T),H^{5}\cap H_{\Gi}^4)\longrightarrow C^0([0,T], H^5_{\Gi}).$$
	From a classical interpolation result (see $\cite[Theorem\ 4.4.1]{bergolo}$ with $n=1$), we have $F: 
L^2((0,T),H^{2+d}\cap H_{\Gi}^{1+d})\longrightarrow C^0([0,T], H^{2+d}_{\Gi})$ with $d\in [1,3]$. Thanks to Proposition 
$\ref{bor}$, if $d\in [1,3/2)$ and $f(s)\in H^{2+d}\cap H^{1+d}_{\NN\KK}\cap H^2_{\Gi}=H^{2+d}\cap H^{1+d}_{\Gi}$ 
for almost every $s\in (0,T)$, then $G\in C^0([0,T], H^{2+d}_{\Gi}).$ 
	
	\medskip
	\needspace{3\baselineskip}

	\noindent
	{\bf 1) (b) Under Assumptions II.3 .} If $\Gi$ is equipped with ($\Di$), then $H_{\Gi}^2= H_{\NN\KK}^2\cap 
H_{\Gi}^1$
	and $H_{\Gi}^4=H_{\NN\KK}^4\cap H_{\Gi}^3$ from Proposition $\ref{bor}$. As above, if $f(s)\in H^{3}\cap 
H_{\NN\KK}^2\cap H_{\Gi}^1$ for almost every $s\in (0,T)$, then $G\in C^0([0,T], H^3_{\Gi})$, while 
if $f(s)\in H^{5}\cap H_{\NN\KK}^4\cap H_{\Gi}^3$ for almost every $s\in (0,T)$, then $G\in 
C^0([0,T], H^5_{\Gi})$. 
From the interpolation techniques, if $d\in [1,5/2)$ and $f(s)\in H^{2+d}\cap H^{1+d}_{\NN\KK}\cap H^{d}_{\Gi}$ for 
almost every $s\in (0,T)$, then $G\in C^0([0,T], H^{2+d}_{\Gi})$.
	
	\medskip 
	
	\needspace{3\baselineskip}

	\noindent
	{\bf 1) (c) Under Assumptions II.2 .} Let $f(s)\in H^{4}\cap H_{\NN\KK}^3\cap H_\Gi^2$ for almost every $s\in 
(0,T)$ and $\Gi$ be equipped with $(\NN)$. In this framework, the last term in right-hand side $(\ref{compu})$ is 
zero. Indeed, $\dd_x^2f(s)\in C^0$ as $f (s)\in H_{\NN\KK}^3$ and, for $v\in V_e$, we have $\dd_{x}\phi_k(v)=0$ 
thanks to the ($\NN$) boundary conditions (the terms $a^j(v)$ have different signs according to the orientation of 
the edges connected in $v$). For every $v\in V_i$, thanks to the $(\NN\KK)$ in $v\in V_i$, we have $\sum_{j\in 
N(v)}a^j(v)\dd_{x}\phi_k^j(v)=0$. 
	From $(\ref{compu})$, we obtain
	\begin{equation*}\begin{split}
	\la\phi_k,f(s)\ra_{L^2}&
	=
	-\frac{1}{\lambda_k^2}\sum_{v\in V_i\cup V_e} 
	\sum_{j\in N(v)}a^j(v)\phi_k^j(v)\dd_{x}^3f^j(s,v)+
	\frac{1}{\lambda_k^2 }\int_\Gi\phi_k(y)\dd_{x}^4f(s,y)dy.\\
	\end{split}\end{equation*}
	Now, $(\phi_k)_{k\in\N^*}$ is a Hilbert basis of $L^2(\Gi,\C)$ and we proceed as in $(\ref{comput})$ and 
$(\ref{GG2})$. From Lemma $\ref{uppp}$, there exists $C_6(t)>0$ uniformly bounded for $t$ lying in bounded 
intervals such that $\|G\|_{H^4_{\Gi}}\leq C_1(t)\|f(\cdot,\cdot)\|_{L^2((0,t),H^4)}$ and $G\in C^0([0,T], 
H^{4}_{\Gi}).$ Equivalently, when $f(s)\in H^{6}\cap H_{\NN\KK}^5\cap H_\Gi^4$ for almost every $s\in (0,T)$, we 
have $G\in C^0([0,T], H^{6}_{\Gi}).$
	As above, Proposition $\ref{bor}$ implies that when $d\in[2,7/2)$ and $f(s)\in H^{2+d}\cap H^{1+d}_{\NN\KK}\cap 
H^2_{\Gi}$ for almost every $s\in (0,T)$, then $G\in C^0([0,T], H^{2+d}_{\Gi}).$

	\smallskip
	
	\needspace{3\baselineskip}
	\noindent
	{\bf 2) Conclusion. } As $Ran(B|_{H_{\Gi}^{2+d}})\subseteq H^{2+d}\cap H^{1+d}_{\NN\KK} H^2_{\Gi}\subseteq H^{2+d}$, we have $B\in L(H^{2+d}_{\Gi},H^{2+d})$ thanks to the arguments of $\cite[Remark\ 2.1]{mio1}$. Let $\psi_0\in H_{\Gi}^{2+d}$. We consider the map $F:\psi \in C^0([0,T],H^{2+d}_{\Gi})\mapsto\phi\in C^0([0,T],H^{2+d}_{\Gi}) $ with
	$$\phi(t)=F(\psi)(t)=e^{-iA t}\psi_0-\int_0^te^{-iA(t-s)}u(s) B\psi(s)ds,\ \ \ \ \ \forall t\in 
[0,T].$$
	For every $\psi^1,\psi^2\in  C^0([0,T], H^{2+d}_{\Gi})$, we have $F(\psi^1)(t)-F(\psi^2)(t)=\int_0^t e^{-iA(t-s)}u(s) B(\psi^1(s)-\psi^2(s))ds$. From {\bf 1)}, there exists $C(t)>0$ uniformly bounded for $t$ lying on bounded intervals such that
	\begin{equation*}\begin{split}
	&\|F(\psi^1)-F(\psi^2)\|_{L^\infty((0,T),H^{2+d}_\Gi)}\leq C(T)\|u\|_{L^2((0,T),\R)}\iii B\iii_{L(H^{2+d}_{\Gi},H^{2+d})} \|\psi^1-\psi^2\|_{L^\infty((0,T),H^{2+d}_\Gi)}.\\
	\end{split}\end{equation*}
	 If $\|u\|_{L^2((0,T),\R)}$ is small enough, then $F$ is a contraction and Banach Fixed Point Theorem implies 
that there exists $\psi \in C^0([0,T],H^{2+d}_{\Gi}) $ such that $F(\psi)=\psi.$ When $\|u\|_{L^2((0,T),\R)}$ is 
not sufficiently small, one considers $\{t_j\}_{0\leq j\leq n}$ a partition of $[0,T]$ with $n\in\N^*$. We choose a 
partition such that each $\|u\|_{L^2([t_{j-1},t_j],\R)}$ is so small that the map $F$, defined on the interval 
$[t_{j-1},t_j]$, is a contraction. Thanks to the Banach Fixed Point Theorem, the existence and the uniqueness of 
the mild solution is provided. In conclusion, the solution $\psi$ of the \eqref{mainx1} when $u\in 
C^0((0,T),\R)$ is $C^1((0,T),L^2(\Gi,\C))$ 
and $\dd_t\|\psi(t)\|^2=0$, which implies $\|\psi(t)\| =\|\psi(0)\|$ for every $t\in[0,T]$. The generalization for 
$u\in L^2((0,T),\R)$ follows from classical density arguments. \qedhere
\end{proof}

\section{Abstract global exact controllability result}\label{proofglobal}
\subsection{Local controllability}
The aim of this section is to prove Theorem $\ref{global}$. The result is achieved by gathering the 
local exact controllability and the global approximate controllability (both provided below) thanks to the time 
reversibility of the $(\ref{mainx1})$. Before stating the local result, we need to introduce the following 
auxiliary lemma.

\begin{lemma}\label{momentproblem}
	Let the hypotheses of Theorem \ref{global} be satisfied. Let $T>2\pi/\delta$ with $\delta>0$ defined in Lemma 
\ref{g13}. For every $(x_k)_{k\in\N^*}\in h^{\tilde d}(\C)$ with $x_1\in\R$, there exists $u\in 
L^2((0,T),\R)$ such that 
	\begin{equation*}
	{x_{k}}=\int_{0}^Tu(\tau)e^{i(\lambda_k-\lambda_1) \tau}d\tau\ \ \ \ \ \ \ \ 
\forall k\in\N^*.\end{equation*}
\end{lemma}
\begin{proof}
The result is consequence of Proposition \ref{momentproblem123}.\qedhere
\end{proof}

\begin{prop}\label{lcl}
	Let the hypotheses of Theorem \ref{global} be satisfied. Let $s=2+d$ with $d$ defined in Assumptions II. There 
exist $T>0$ and $\epsilon>0$ such that, for every $$\psi\in O_{\epsilon,T}^{s}:=\big\{\psi\in H_{\Gi}^{s}\ \big|\ 
\|\psi\|_{L^2}=1,\ \|\psi -\phi_1(T)\|_{(s)}<\epsilon\big\},$$ there exists a control function $u\in 
L^2((0,T),\R)$ such that $\psi= \G^u_T\phi_1.$
\end{prop} 
\begin{proof}
The result can be proved by ensuring to the surjectivity, for $T>0$ sufficiently large, of the map
$$\G_T^{(\cdot)}\phi_1:u\in L^2((0,T),\R)\longmapsto \psi \in O_{\epsilon,T}^{s}\subset  H^s_{\Gi},\ \ \ 
\ \ \  \G_{t}^{(\cdot)}\phi_1=\sum_{k\in\N^*}{\phi_k(t)}\la \phi_k(t),\G_{t}^{(\cdot)}\phi_1\ra_{L^2}.$$ 

\vspace{-2mm}

\noindent
Let the map $\alpha$ be the sequence with elements $\alpha_{k}(u)=\la \phi_k(T), \G_{T}^u\phi_1\ra_{L^2}$ for 
$k\in\N^*$,
so that
$$\alpha:L^2((0,T),\R)\longrightarrow Q:=\{{\bf x}:=(x_k)_{k\in\N^*}\in h^s(\C)\ |\ \|{\bf x}\|_{\ell^2}=1\}.$$ 
The local controllability can be guaranteed by proving the local surjectivity of the map $\alpha$ in a 
neighborhood of $\alpha(0)=\updelta=(\delta_{k,1})_{k\in\N^*}$ with respect to the $h^s$ norm. To this end, we use 
the Generalized Inverse Function Theorem (\cite[Theorem\  1;\ p.\ 240]{Inv}) and we study the surjectivity of 
$\gamma(v):=(d_u\alpha(0))\cdot\ v$ the Fr�chet derivative of $\alpha$. Let $B_{j,k}:=\la\phi_j,B\phi_k\ra_{L^2}$ 
with $j,k\in\N^*$. The map $\gamma:L^2((0,T),\R)\longrightarrow T_{\updelta}Q=\{{\bf x}:=(x_k)_{k\in\N^*}\in 
h^s(\C)\ |\ ix_1\in\R\}$ is the sequence of elements $\gamma_{k}(v):=
-i\int_{0}^Tv(\tau)e^{i(\lambda_k-\lambda_1)s}d\tau B_{k,1}$ with $k\in\N^*$.
Now,
\begin{equation}\begin{split}\label{mome1}
{x_{k}}/{B_{k,1}}=-i\int_{0}^Tu(\tau)e^{i(\lambda_k-\lambda_1)\tau}d\tau,\ \ \ \ \ \ \ \ \ \ \forall 
(x_{k})_{k\in\N^*}\in T_{\updelta}Q\subset h^s 
\\
\end{split}\end{equation} is the moment problem associated to the local exact controllability. Proving 
surjectivity of $\gamma$ corresponds to ensure the solvability of \eqref{mome1}. In other words, we prove that 
there exists $T>0$ large enough such that, for every $(x_k)_{k\in\N^*}\in T_{\updelta}Q$, there exists $u\in 
L^2((0,T),\R)$ such that $(x_k)_{k\in\N^*}=(\gamma_{k} (u))_{k\in\N^*}$.
Even though the strategy of the proof is common for this kind of works (see 
$\cite{laurent,morgane1,morganerse2,mio1,mio2}$), proving the solvability of \eqref{mome1} can not be approached 
with the classical techniques as we can not ensure the validity of the spectral gap 
$\inf_{k\in\N^*}|\lambda_{k+1}-\lambda_k|>0$. To this purpose, we refer 
to the theory developed in Appendix \ref{momenti} which leads to Lemma \ref{momentproblem}.
We notice that $B_{1,1}\in\R$ as $B$ is symmetric, $i{x_{1}}/B_{1,1}\in\R$ and $\big(x_k 
/B_{k,l}\big)_{k\in\N^*}\in h^{d-\eta}\subseteq h^{\tilde d}$ thanks to the first point of Assumptions I. Thanks to 
Lemma \ref{g13} and the identity \eqref{gapp}, the hypotheses of Lemma $\ref{momentproblem}$ are satisfied and the 
solvability of $(\ref{mome1})$ is guaranteed in $h^{\tilde d}$. In conclusion, the map $\gamma$ is surjective and 
$\alpha$ is locally surjective, which implies the local exact controllability.\qedhere
\end{proof}

\begin{osss}\label{time}
The identity
\eqref{g13_aus} ensures the 
validity of \eqref{numer} for $\delta>0$ as large as desired when $\MM\in\N^*$ is also 
sufficiently large. As a consequence, Lemma \ref{momentproblem} is valid for any $T>0$ and the same is true for 
the solvability of the moment problem \eqref{mome1}. Finally, Proposition 
\ref{lcl} can be guaranteed for any positive time.
\end{osss}

\subsection{Global approximate controllability in $H^s_{\Gi}$}

\begin{defi}
	The \eqref{mainx1} is said to be globally approximately controllable in $H_{\Gi}^{s}$ with $s>0$ when, for 
every $\psi\in H^{s}_{\Gi}$, $\widehat\G\in U(L^2(\Gi,\C))$ \big(the space of the unitary operators in 
$L^2(\Gi,\C)$\big) such that $\widehat\G\psi\in H^{s}_{\Gi}$ and 
$\epsilon>0$, there exist $T>0$ and $u\in L^2((0,T),\R)$ such that $\|\widehat\G\psi-\G^u_T\psi\|_{(s)}<\epsilon$.
\end{defi}
\begin{prop}\label{approx}
Let $(A,B)$ satisfy Assumptions I$(\eta)$ and Assumptions II$(\eta,\tilde d)$ for $\eta>0$ and $\widetilde d\geq 0$. The (\ref{mainx1}) is globally approximately controllable in $H^{s}_{\Gi}$ for $s=2+d$ with $d$ from Assumptions II.
\end{prop}
\begin{proof}
The proof is obtained by simply adapting the one of \cite[Theorem\ 4.4]{mio1}. As a consequence, we only focus on 
 detailing those steps where the two proofs differ.

\smallskip

\needspace{3\baselineskip}

\noindent
{\bf 1) }As in the mentioned proof, in the point {\bf 1)\,\,}of the proof, we 
suppose that $(A,B)$ admits a non-degenerate chain of connectedness (see \cite[Section\ 4.2]{chambrion} or 
\cite[Definition\ 3]{nabile}). We treat 
the general case in the point {\bf 2)\,\,}. Let $\pi_m$ be the orthogonal projector 
$\pi_m:\Hi:=L^2(\Gi,\C)\longrightarrow \Hi_m:=\spn\{\phi_j\ :\ j\leq m\}$ with $m\in\N^*.$
Up to reordering $(\phi_k)_{k\in\N^*}$, the couples $(\pi_{m}A\pi_{m},\pi_{m}B\pi_{m})$ for $m\in\N^*$ admit 
non-degenerate chains of connectedness in $\Hi_{m}$. Let $\|\cdot\|_{BV(T)}=\|\cdot\|_{BV((0,T),\R)}$ and 
$\iii\cdot\iii_{(s)}:=\iii\cdot\iii_{L(H^s_{\Gi},H^s_{\Gi})}$ for $s>0.$

\smallskip

\needspace{3\baselineskip}

\noindent
{\bf 1) (a) Approximate controllability with respect to the $L^2$-norm.} Let $\psi\in \Hi$ and $\widehat \G\in 
U(\Hi)$. We refer to the proof of the global approximate controllability with 
respect to the $L^2$-norm developed in the first point of the proof of \cite[Theorem\ 4.4]{mio1}. By considering 
$\Hi:=L^2(\Gi,\C)$, the mentioned proof ensures the existence of $K_1,K_2,K_3>0$ such that for every 
$\varepsilon>0$, there exist $T>0$ 
and 
$u\in L^2((0,T),\R)$ such that  
\begin{equation}\begin{split}\label{casinooo}&\|u\|_{BV(T)} \leq K_1,\ \ \|u\|_{L^\infty((0,T),\R)}\leq 
K_2,\ \ T\|u\|_{L^\infty((0,T),\R)}\leq K_3\ \ \ \  \text{ and }\ \ \  \ 
\|\G_{T}^{u}\psi-\widehat\G\psi\|_{L^2}<\varepsilon.
\end{split}\end{equation}

\needspace{3\baselineskip}

\noindent
	{\bf 1) (b) Global approximate controllability in higher regularity norm.} Let $\psi\in H^{s}_{\Gi}$ with 
$s\in[s_1,s_1+2)$ and $s_1\in\N^*$. Let $\widehat \G\in U(\Hi)$ be such that $\widehat \G\psi\in H^{s}_{\Gi}$ and 
$B:H^{s_1}_{\Gi}\longrightarrow H^{s_1}_{\Gi}$. As in the proof of \cite[Theorem\ 4.4]{mio1}, we consider the 
propagation of regularity developed by Kato in 
$\cite{kato1}$ which ensures the following fact.
For every $T>0$, $u\in BV((0,T),\R)$ and $\psi\in H^{s_1+2}_{\Gi}$, there exists $C(K)>0$ depending on
\begin{equation}\label{diid1}K=\big(\|u\|_{BV(T)},\|u\|_{L^\infty((0,T),\R)},T\|u\|_{L^\infty((0,T),\R)}\big)\ 
\ \ \ \text{such that}\ \ \ \ \|\G_ { T}^{u} \psi\|_{(s_1+2)}\leq  C(K)\|\psi\|_{(s_1+2)}.\end{equation}
Now, we notice that, for every $\psi\in H^{6}_{\Gi}$, from the Cauchy-Schwarz inequality, we have $\|A\psi\|_{L^2}^2
\leq\|\psi\|_{L^2}\|A^2\psi\|_{L^2}$ and there exists $C_2>0$ such that $\|A^2\psi\|_{L^2}^4\leq\|A\psi\|_{L^2}^2\|A^3\psi\|_{L^2}^2\leq C_2\|\psi\|_{L^2}\|A^3\psi\|_{L^2}^3$. By following the same idea, for every $\psi\in H^{s_1+2}_{\Gi}$, there exist $m_1,m_2\in\N^*$ and $C_3,C_4>0$ such that
\begin{equation}\label{diid2}\|A^\frac{s}{2}\psi\|_{L^2}^{m_1+m_2}\leq C_3\|\psi\|_{L^2}^{m_1}\|A^\frac{s_1+2}{2}\psi\|_{L^2}^{m_2} \ \ \ \ \ \Longrightarrow \ \ \ \ \|\psi\|_{(s)}^{m_1+m_2}\leq C_4\|\psi\|_{L^2}^{m_1}\|\psi\|_{(s_1+2)}^{m_2}.\end{equation}
	Finally, when $B:H^{s_1}_{\Gi}\longrightarrow H^{s_1}_{\Gi}$ with $s_1>0$ and $(A,B)$ admits a 
non-degenerate chain of connectedness, the identities \eqref{casinooo}, \eqref{diid1} and \eqref{diid2} ensure the 
global 
approximate controllability in $H^s_\Gi$ for $s\in [s_1,s_1+2)$.

\smallskip

\noindent
{\bf 1) (c)  Conclusion.} Let $d$ be the parameter introduced by the validity of Assumptions II. If $d<2$, then 
$B:H^{2}_{\Gi}\rightarrow H^{2}_{\Gi}$ and the global approximate controllability is verified in $H^{d+2}_{\Gi}$ 
since $d+2<4.$ 
If $d\in [2,5/2)$, then $B:H^{d_1}\rightarrow H^{d_1}$ with $d_1\in(d,5/2)$ from Assumptions II. Now, $H^{d_1}_{\Gi}=H^{d_1}\cap H^2_\Gi$, thanks to Proposition $\ref{bor}$, and $B:H^{2}_{\Gi}\rightarrow H^{2}_{\Gi}$ implies $B:H^{d_1}_{\Gi}\rightarrow H^{d_1}_{\Gi}$. 
	The global approximate controllability is verified in $H^{d+2}_{\Gi}$ since $d+2<d_1+2.$
If $d\in [5/2,7/2)$, then $B:H_{\NN\KK}^{d_1}\rightarrow H_{\NN\KK}^{d_1}$ for $d_1\in(d,7/2)$ and $H^{d_1}_{\Gi}=H^{d_1}_{\NN\KK}\cap H^2_\Gi$ from Proposition $\ref{bor}$. Now, $B:H^{2}_{\Gi}\rightarrow H^{2}_{\Gi}$ that implies $B:H^{d_1}_{\Gi}\rightarrow H^{d_1}_{\Gi}$. 
The global approximate controllability is verified in $H^{d+2}_{\Gi}$ since $d+2<d_1+2.$

\smallskip
\noindent
{\bf 2) Generalization.} Let $(A,B)$ do not admit a non-degenerate chain of connectedness. We decompose 
$$A+u(\cdot)B=(A+u_0B)+u_1(\cdot)B,\ \ \ \ \ \ \  \ \ \ \ \ \  \ u_0\in \R,\ \ \ \ u_1\in L^2((0,T),\R).$$ If 
$(A,B)$ satisfies Assumptions I$(\eta)$ and Assumptions II$(\eta,\tilde d)$ for $\eta>0$ and $\tilde d\geq 0$, then 
Lemma \ref{chain1} and Lemma \ref{equi1} are valid. We consider $u_0$ in the neighborhoods provided by the two 
lemmas and we denote $(\phi_k^{u_0})_{k\in\N^*}$ a Hilbert basis of $\Hi$ made by eigenfunctions of $A+u_0B$. The 
point {\bf 1)\,\,}can be repeated by considering the sequence $(\phi_k^{u_0})_{k\in\N^*}$ instead of 
$(\phi_k)_{k\in\N^*}$ and the spaces $D(|A+u_0B|^\frac{s}{2})$ in substitution of $H^{s}_\Gi$ with $s>0$. The claim 
is equivalently proved since $(A+u_0B,B)$ admits a non-degenerate chain of connectedness from Lemma 
\ref{chain1} and $\big\||A+u_0B|^\frac{s}{2}\cdot\big\|_{L^2}\asymp\|\cdot\|_{(s)}$ with $s=2+d$ and $d$ from 
Assumptions II$(\eta,\tilde d)$ thanks to Lemma \ref{equi1}. \qedhere
\end{proof}

\subsection{Proof of Theorem $\ref{global}$}

Let $T,\epsilon>0$ be so that Proposition \ref{lcl} is valid. Let us assume $\psi_1,\psi_2\in H^{s}_{\Gi}$ such 
that $\|\psi_1\|_{L^2}=\|\psi_2\|_{L^2}=1$.  The same technique also applies in the general case. Thanks to 
Proposition \ref{approx}, we have $$\exists T_1,T_2>0,\ u_1\in L^2((0,T_1),\R),\ u_2\in 
L^2((0,T_2),\R)\ \ \ \ :\ \ \ \ \|\G^{u_1}_{T_1}\psi_1-\phi_1\|_{(s)}<{\epsilon},\ \ \ \ \ 
\|\G^{u_2}_{T_2}\psi_2-\phi_1\|_{(s)}<{\epsilon}$$
	and then $ \G^{u_1}_{T_1}\psi_1,\G^{u_2}_{T_2}\psi_2\in O_{\epsilon,T}^{s}.$ From {\bf 1)}, there exist 
$u_3,u_4\in L^2((0,T),\R)$ such that $$\G_T^{u_3}\G^{u_1}_{T_1}\psi_1=\G_T^{u_4}\G^{u_2}_{T_2}\psi_2=\phi_1\ \ \ 
\ 
 \Longrightarrow \ \ \ \ \ \exists T>0,\ \widetilde u\in L^2((0,\widetilde T),\R)\ \ :\ \ \G_{\widetilde 
T}^{\widetilde u}\psi_1=\psi_2.$$ 

\needspace{4mm}
{\noindent \bf{Acknowled}g{ments}.} The author would like to thank the
referees for the constructive comments which improved the organization of the work. He is also grateful to Olivier 
Glass and Nabile Boussa\"id for having carefully reviewed the presented theory and to 
Ka\"is Ammari for suggesting him the problem. He also thanks the colleagues Riccardo Adami, 
Enrico Serra and Paolo Tilli for the fruitful conversations.

\appendix\section{Appendix: Some auxiliary spectral results}\label{numeri}
In the current appendix, we characterize $(\lambda_k)_{k\in\N^*}$, the eigenvalues of the Laplacian $A$ in the \eqref{mainx1}, according to the structure of $\Gi$ and to the definition of $D(A)$.

\begin{prop}{(Roth's Theorem; $\cite{Roth}$)}\label{rotth}
	If $z$ is an algebraic irrational number, then for every $\epsilon>0$ the inequality $\big|z-\frac{n}{m}\big|\leq\frac{1}{m^{2+\epsilon}}$ is satisfied for at most a finite number of $n,m\in\Z.$
\end{prop}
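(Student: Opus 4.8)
The statement is Roth's theorem, a cornerstone of Diophantine approximation, and in the body of the paper it is invoked only as a black box via \cite{Roth}; I would not attempt an elementary derivation, since no such derivation is known. The plan is to follow the Thue--Siegel--Roth polynomial method and argue by contradiction. Assuming $z$ is algebraic of degree $d$ and irrational, suppose there were infinitely many reduced fractions $n/m$ (with $m\geq 1$) satisfying $|z-n/m|\leq m^{-(2+\epsilon)}$. First I would extract from this infinite family a finite subfamily $n_1/m_1,\dots,n_k/m_k$ whose denominators grow extremely fast, $m_{i+1}\gg m_i$, with $k$ chosen large in terms of $\epsilon$ and $d$; rapid growth of the $m_i$ is exactly the hypothesis that later feeds Roth's lemma.

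The heart of the construction is an auxiliary polynomial. Using Siegel's lemma (a pigeonhole count of integer solutions to an underdetermined linear system), I would produce a nonzero $P(X_1,\dots,X_k)\in\Z[X_1,\dots,X_k]$, of degree at most $r_i$ in $X_i$ and with integer coefficients of controlled height, vanishing to high order at the diagonal point $(z,\dots,z)$; here one measures the order of vanishing by the \emph{index} of $P$ relative to the weights $r_i$, and the degree count guarantees an index of roughly $k/2$ at $(z,\dots,z)$. Because $z$ is algebraic, the vanishing conditions can be written as integer linear constraints, so Siegel's lemma applies and the coefficient height stays polynomially bounded.

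Next come the two competing estimates on the index of $P$ at the \emph{rational} point $(n_1/m_1,\dots,n_k/m_k)$. On one side, a Taylor expansion around $(z,\dots,z)$ together with the smallness $|z-n_i/m_i|\leq m_i^{-(2+\epsilon)}$ forces every low-order derivative of $P$ at the rational point to be tiny; since these derivatives are rationals with denominators controlled by the $m_i$, ``tiny and nonzero'' is impossible, so the index of $P$ at the rational point must be large. On the other side stands Roth's lemma, the genuinely deep ingredient: it asserts that a polynomial of the prescribed degrees and height cannot have index larger than a small quantity at $(n_1/m_1,\dots,n_k/m_k)$, precisely because the $m_i$ were chosen to grow fast enough. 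Comparing the two bounds yields a contradiction, so only finitely many such fractions exist.

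I expect Roth's lemma to be the main obstacle by a wide margin: its proof is an induction on the number of variables via generalized Wronskians that factor $P$ into pieces in fewer variables, with a delicate bookkeeping of heights at each step, and it is this lemma --- not the auxiliary-function construction, which is standard Thue--Siegel technology --- that gives the result its depth. For the purposes of the present work, however, the only thing needed downstream is the finiteness conclusion itself, which I would simply quote in the stated form and feed into the spectral-gap estimates of Appendix \ref{numeri}.
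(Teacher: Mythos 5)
Your proposal matches the paper exactly: the paper states this result as a quoted classical theorem with the citation \cite{Roth} and gives no proof, which is precisely what you conclude by treating it as a black box. Your sketch of the Thue--Siegel--Roth polynomial method (Siegel's lemma, the index estimates, and Roth's lemma via generalized Wronskians) is an accurate description of the standard proof, but nothing beyond the finiteness statement itself is used downstream in Appendix \ref{numeri}.
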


\begin{proof}[Proof of Lemma \ref{figo}]
	For every $k\in\N^*$, there exist $m, n\in \N^*$, $i\leq N_1$ and $l\leq N_2$ such that $\lambda_{k+1}^1=\frac{m^2\pi^2}{L_l^2}$ and $\lambda_k^2=\frac{n^2\pi^2}{\tilde L_i^2}$. We suppose $L_l<\tilde L_i$. Let $z$ be an algebraic irrational number. From Proposition $\ref{rotth}$, we have that, for every $\epsilon>0$, there exists $C>0$ such that $|z-{n}/{m}|\geq{C}{m^{-2-\epsilon}}$ for every $m,n\in\N^*.$ Thus, when $m<n$, for each $\epsilon>0$, there exists $C_1>0$ such that
	\begin{equation*}\begin{split}
	\Big|\frac{m^2\pi^2}{L_l^{2}}-\frac{n^2\pi^2}{\tilde L_i^{2}}\Big|&=
	\Big|\Big(\frac{m\pi}{L_l}+\frac{n\pi}{\tilde L_i}\Big)
	\Big(\frac{m\pi}{L_l}-\frac{n\pi}{\tilde L_i}\Big)\Big|\geq  \frac{2m\pi}{\tilde L_i}\Big|\frac{m\pi}{L_l}-\frac{n\pi}{\tilde L_i}\Big|\geq
	\frac{2C_1\pi^2}{m^{\epsilon}\tilde L_i^2}.\\
	\end{split}\end{equation*}

	\noindent
	If $m\geq n$, then $\big|\frac{m^2\pi^2}{L_l^{2}}-\frac{n^2\pi^2}{\tilde L_i^{2}}\big|\geq \pi^2({L_l^{-2}-\tilde L_i^{-2}})$, which conclude the proof.
\end{proof}
%

We consider now the techniques developed in $\cite[Appendix\ A]{wave}$ in order to prove $\cite[Proposition\ 
A.11]{wave}$. For $x\in\R$, we denote by $E(x)$ the closest integer number to $x$ and  
$$\iii x\iii=\min_{z\in\Z}|x-z|,\ \ \ F(x)=x-E(x).$$ We notice $|F(x)|=\iii x\iii$ and $-\frac{1}{2}\leq F(z)\leq 
\frac{1}{2}.$ Let $\{L_j\}_{j\leq N}\in(\R^+)^N$ and $i\leq N$. We also define
$$n(x):=E\Big(x-\frac{1}{2}\Big),\ \ \ \  r(x):=F\Big(x-\frac{1}{2}\Big),\ \ \ \  d(x):=\iii x-\frac{1}{2}\iii ,\ \ \ \ \widetilde m^i(x) :=n\Big(\frac{L_i}{\pi}x\Big).$$

\begin{prop}\label{zuazua}
	Let $\{L_k\}_{k\leq N}\in\AL\LL(N)$ with $N\in\N^*$. Let $(\omega_n)_{n\in\N^*}$ be the unbounded ordered sequence of positive solutions of the equation 
	\begin{equation}\label{zizi}\sum_{l\leq N}\sin(x L_l)\prod_{m\neq l }\cos(x L_m)=0,\ \ \ \ \ \ \ \ \ x\in\R.\end{equation}
	For every $\epsilon>0$, there exists $C_\epsilon>0$ so that $|\cos(\omega_n L_l)|\geq \frac{C_\epsilon}{\omega_n^{1+\epsilon}}$ for every $l\leq N$ and $n\in\N^*.$
\end{prop}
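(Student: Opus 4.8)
The plan is to reduce the defining equation to the single identity $\sum_{l\le N}\tan(\omega_n L_l)=0$ and then to exploit the fact that smallness of one factor $\cos(\omega_n L_l)$ forces a near-coincidence between two of the lengths, which is quantitatively forbidden by Roth's Theorem (Proposition $\ref{rotth}$). First I would dispose of the solutions at which some cosine vanishes. If $\cos(\omega_n L_{m_0})=0$ at a positive solution, then every summand of $\sum_l\sin(\omega_n L_l)\prod_{m\neq l}\cos(\omega_n L_m)$ with $l\neq m_0$ contains the vanishing factor $\cos(\omega_n L_{m_0})$, so the equation collapses to $\sin(\omega_n L_{m_0})\prod_{m\neq m_0}\cos(\omega_n L_m)=0$; since $\sin(\omega_n L_{m_0})=\pm1$, a second cosine $\cos(\omega_n L_{m_1})$ must also vanish. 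Then $\omega_n L_{m_0}$ and $\omega_n L_{m_1}$ are both odd multiples of $\pi/2$, forcing $L_{m_0}/L_{m_1}\in\Q$, which contradicts $\{L_k\}_{k\le N}\in\AL\LL(N)$. Hence $\prod_m\cos(\omega_n L_m)\neq0$ at every positive solution and the equation is equivalent to $\sum_{l\le N}\tan(\omega_n L_l)=0$.

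Next I would set $\mu_n:=\min_{l\le N}|\cos(\omega_n L_l)|$ and let $j$ realize this minimum; since each $|\cos(\omega_n L_l)|\ge\mu_n$, it suffices to bound $\mu_n$ from below. I may assume $\mu_n$ is smaller than a fixed constant, for otherwise the asserted inequality holds trivially for $\omega_n$ large, and the finitely many remaining $\omega_n$ can be absorbed into $C_\epsilon$ because no cosine vanishes. With $\mu_n$ small, $|\sin(\omega_n L_j)|\ge\sqrt3/2$, so $|\tan(\omega_n L_j)|\gtrsim\mu_n^{-1}$. The identity $\sum_l\tan(\omega_n L_l)=0$ then gives $(N-1)\max_{l\neq j}|\tan(\omega_n L_l)|\ge|\tan(\omega_n L_j)|\gtrsim\mu_n^{-1}$, so some index $l'\neq j$ satisfies $|\tan(\omega_n L_{l'})|\gtrsim\mu_n^{-1}$, whence $|\cos(\omega_n L_{l'})|\lesssim\mu_n$. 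Thus both $\omega_n L_j$ and $\omega_n L_{l'}$ lie within $O(\mu_n)$ of odd multiples of $\pi/2$, say $\omega_n L_j=(2p+1)\tfrac{\pi}{2}+\alpha$ and $\omega_n L_{l'}=(2q+1)\tfrac{\pi}{2}+\beta$ with $|\alpha|,|\beta|\lesssim\mu_n$.

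Finally I would eliminate $\omega_n$ by taking the quotient of these two relations, obtaining
\[
\Bigl|\frac{L_j}{L_{l'}}-\frac{2p+1}{2q+1}\Bigr|\lesssim\frac{\mu_n}{\omega_n},
\]
using that $2p+1,2q+1\asymp\omega_n$ (as $\alpha,\beta$ are bounded and $\omega_n$ is large). Since all ratios of lengths in $\AL\LL(N)$ are algebraic irrational numbers, $L_j/L_{l'}$ is algebraic irrational, and Roth's Theorem in the uniform form already used in Lemma $\ref{figo}$ gives $\bigl|L_j/L_{l'}-(2p+1)/(2q+1)\bigr|\ge C\,\omega_n^{-2-\epsilon}$. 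Comparing the two estimates yields $\mu_n\gtrsim\omega_n^{-1-\epsilon}$; taking the minimum of the finitely many Roth constants over all pairs $(j,l')$ produces a single $C_\epsilon$ valid for every $l\le N$ and every $n\in\N$.

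The hard part will be the middle step: converting the analytic equation into the statement that one small cosine forces a second one, and then propagating the $O(\mu_n)$ errors through the division carefully enough that the resulting rational approximant $(2p+1)/(2q+1)$ of $L_j/L_{l'}$ has denominator of the exact order $\omega_n$. Once this bookkeeping is in place, Roth's Theorem closes the argument immediately. The degenerate case of vanishing cosines and the reduction to $\sum_l\tan=0$ are clean but must be recorded, since otherwise the passage to tangents is not justified.
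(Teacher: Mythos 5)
Your argument is correct, but it is genuinely different from the one in the paper. The paper follows the machinery of D\'ager--Zuazua's Proposition A.11: writing $d(x)=\iii x-1/2\iii$ and $2d(x)\leq|\cos(\pi x)|\leq\pi d(x)$, it derives from the equation $(\ref{zizi})$ the product estimate $C|\cos(L_i\omega_n)|\geq\prod_{j\neq i}d\big((\widetilde m^i(\omega_n)+\tfrac12)\tfrac{L_j}{L_i}\big)$ and then invokes \emph{Schmidt's theorem on simultaneous approximation} ($\cite[Theorem\ A.7]{wave}$), which lower-bounds the whole product of $N-1$ distances by $C_\epsilon\,\omega_n^{-1-\epsilon}$ at once; this uses the full strength of $\{L_j\}_{j\leq N}\in\AL\LL(N)$ and, importantly, only needs the qualitative input ``one cosine small along a subsequence forces the product of the others small'', which is why the same proof recycles into the two abstract statements of Remark $\ref{zuazuafigo}$. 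You instead exploit the specific form $\sum_l\tan(\omega_n L_l)=0$ to extract a \emph{second} index $l'$ with $|\cos(\omega_n L_{l'})|\lesssim\mu_n$ quantitatively, and then apply only Roth's theorem (Proposition $\ref{rotth}$) to the single ratio $L_j/L_{l'}$: the error propagation $\big|L_j/L_{l'}-(2p+1)/(2q+1)\big|\lesssim\mu_n/\omega_n$ with $2q+1\asymp\omega_n$ is sound, and it delivers the same exponent $1+\epsilon$. Your route is more elementary (no Schmidt, only Roth, and only the ``algebraic irrational ratios'' half of the $\AL\LL(N)$ hypothesis), and your preliminary reduction ruling out exactly vanishing cosines is a clean point the paper leaves implicit; the trade-off is that your argument is tied to the exact equation $(\ref{zizi})$ and would not directly yield the generalizations in Remark $\ref{zuazuafigo}$, where only a product of cosines or sines is assumed to degenerate.
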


\begin{proof}
From $\cite[relation\ (A.3)]{wave}$, for every $x\in\R$, we obtain the identities
	\begin{equation}\label{primom}2d({x})\leq|\cos(\pi x)|\leq\pi d({x}),\ \ \ \ \ \ \ \ 2d\Big( \Big(\widetilde m^i(x)+\frac{1}{2}\Big)\frac{L_j}{L_i}\Big)  \leq \Big|\cos\Big(\Big(\widetilde m^i(x)+\frac{1}{2}\Big)\frac{L_j}{L_i}\pi\Big)\Big|.\end{equation}
	As $\cos(\alpha_1-\alpha_2)=\cos(\alpha_1)\cos(\alpha_2)+\sin(\alpha_1)\sin(\alpha_2)$ for $\alpha_1,\alpha_2\in\R$ and $\widetilde m^i(x)+\frac{1}{2}=\frac{L_i}{\pi}x-r\big(\frac{L_i}{\pi}x\big)$ for every $x\in\R$, we have 
	\begin{equation}\begin{split}\label{bingo11}
	&2d\Big( \Big(\widetilde m^i(x)+\frac{1}{2}\Big)\frac{L_j}{L_i}\Big)  \leq 
	|\cos(L_jx)|+\left|\sin\left(\pi\frac{L_j}{L_i}\Big|r\Big(\frac{L_i}{\pi}x \Big)\Big|\right)\right|.\\
	\end{split}\end{equation}
	From $\cite[relation\ (A.3)]{wave}$ and $(\ref{primom})$, we have the following inequalities $|\sin(\pi|r(\cdot)|)|\leq\pi\iii | r(\cdot)|\iii\leq \pi|r(\cdot)|= \pi d(\cdot)\leq \frac{\pi}{2}|\cos(\pi (\cdot))|$, which imply $\big|\sin\big(\pi\frac{L_j}{L_i}\Big|r\big(\frac{L_i}{\pi}x \big)\big|\big)\big|\leq \pi\frac{L_j}{L_i}\big|r\big(\frac{L_i}{\pi}x \big)\big|\leq \frac{\pi L_j}{2 L_i}|\cos(L_ix)|$ for every $x\in\R.$
From $(\ref{bingo11})$, there exists $C_1>0$ such that, for every $i\leq N$,
	$$\prod_{j\neq i}d\Big( \Big(\widetilde m^i(x)+\frac{1}{2}\Big)\frac{L_j}{L_i}\Big) \leq \frac{1}{2^{N-1}} \prod_{j\neq i}|\cos(L_jx)|+C_1|\cos(L_ix)|\ \ \ \forall x\in\R.$$
	
%
%
%

\noindent	
If there exists $(\omega_{n_k})_{k\in\N^*}\subseteq (\omega_n)_{n\in\N^*}$ such that $|\cos(L_j\omega_{n_k})|\xrightarrow{k\rightarrow\infty} 0$, then $\prod_{j\neq i}|\cos(L_i\omega_{n_k})|\xrightarrow{k\rightarrow\infty}  0$ thanks to $(\ref{zizi})$. Equivalently to $\cite[relation\ (A.10)]{wave}$ (proof of $\cite[Proposition\ A.11]{wave}$), there exists a constant $C_2>0$ such that, for every $i\in\{0,...,N\}$, we have
	$$ C_2|\cos(L_i\omega_n)| \geq \prod_{j\neq i}d\Big( \Big(\widetilde m^i(\omega_n)+\frac{1}{2}\Big)\frac{L_j}{L_i}\Big) =\prod_{j\neq i}\iii \frac{1}{2}\Big(\Big(\widetilde m^i(\omega_n)+\frac{1}{2}\Big)\frac{2L_j}{L_i}-1\Big)\iii.$$
	Now, we have $\iii \frac{1}{2}(\cdot)\iii\geq\frac{1}{2}\iii\cdot\iii $ and $\iii(\cdot)-1\iii=\iii\cdot\iii$. We consider the Schmidt's Theorem $\cite[Theorem\ A.7]{wave}$ since $\{L_k\}_{k\leq N}\in\AL\LL(N)$. For every $\epsilon>0$, there exist $C_3,C_4>0$ such that, for every $n\in\N^*$, we have $\prod_{j\neq i}\frac{1}{2}\iii \Big(\widetilde m^i(\omega_n)+\frac{1}{2}\Big)\frac{2L_j}{L_i}\iii\geq \frac{C_3}{(2\widetilde m^i(\omega_n)+{1})^{1+\epsilon}}\geq
 \frac{C_4}{\omega_n^{1+\epsilon}}.\qedhere$	\end{proof}
\begin{osss}\label{zuazuafigo}	The techniques proving $\cite[Proposition\ A.11]{wave}$ and Proposition $\ref{zuazua}$ lead to the following results. Let $(\omega_n)_{n\in\N^*}\subset\R^+$ be an unbounded sequence and $(\omega_{n_k})_{k\in\N^*}$ any subsequence of $(\omega_n)_{n\in\N^*}$. Let $\{L_k\}_{k\leq N}\in\AL\LL(N)$ with $N\in\N^*$ and $l\leq N$.

\smallskip
\noindent
{\bf 1)} If $|\cos(L_l\omega_{n_k})|\xrightarrow{k\rightarrow\infty} 0$ implies $\prod_{j\neq l}|\cos(L_j\omega_{n_k})|\xrightarrow{k\rightarrow\infty} 0$ or $\prod_{j\neq l}|\sin(L_j\omega_{n_k})|\xrightarrow{k\rightarrow\infty} 0,$ then
\begin{equation}\label{zuazuacos}\forall \epsilon>0,\ \ \ \ \exists C>0\ \ \ \ :\ \ \ \ |\cos(\omega_n L_l)|\geq {C}{\omega_n^{-1-\epsilon}},\ \ \ \ \ \ \ \  \forall l\leq N,\ n\in\N^*.\end{equation}

\noindent
{\bf 2)} If $|\sin(L_l\omega_{n_k})|\xrightarrow{k\rightarrow\infty} 0$ implies $\prod_{j\neq l}|\cos(L_j\omega_{n_k})|\xrightarrow{k\rightarrow\infty} 0$ or $\prod_{j\neq l}|\sin(L_j\omega_{n_k})|\xrightarrow{k\rightarrow\infty} 0,$ then
\begin{equation}\label{zuazuasin}\forall \epsilon>0,\ \ \ \ \exists C>0\ \ \ \ :\ \ \ \ |\sin(\omega_n L_l)|\geq {C}{\omega_n^{-1-\epsilon}},\ \ \ \ \ \ \ \  \forall l\leq N,\ n\in\N^*.\end{equation}

\end{osss}
\section{Appendix: Moment problem}\label{momenti}
Let $\Hi=L^2((0,T),\R)$ with $T>0$ and $\Z^*=\Z\setminus\{0\}$. Let ${\bf \Lambda}=(\lambda_k)_{k\in\Z^*}$ be pairwise distinct ordered real numbers such that
\begin{equation}\begin{split}\label{gapp11}
\exists \MM\in\N^*,\ \exists\delta>0\ \ \  :\ \ \ \inf_{\{k\in\Z^*\ :\ k+\MM\neq 0 \}}|\lambda_{k+\MM}-\lambda_k|\geq\delta \MM.\\
\end{split}\end{equation}
From \eqref{gapp11}, there do not exist $\MM$ consecutive $k\in\Z^*$ such that 
$|\lambda_{k+1}  -  \lambda_{k}| < \delta$ and then, there exist some $j\in\Z^*\setminus\{-1\}$ such that 
$|\lambda_{j+1}-\lambda_{j}|\geq\delta$. This leads to a partition of $\Z^*$ in subsets $\{E_m\}_{m\in\Z^*}$ that we 
construct as follows. We denote by 
$(l_m)_{m\in\Z^*}\subseteq\Z^*\setminus\{-1\}$ the ordered
sequence of all the numbers such that $|\lambda_{l_m+1}-\lambda_{l_m}|\geq\delta.$ We add the value $-1$ when 
$|\lambda_{1}-\lambda_{-1}|\geq\delta$. We denote by
$\{E_{m}\}_{m\in\Z^*}$ the sets
\begin{equation*}\begin{split}
E_{-1}=\Big\{k\in \Z^*\ :\ l_{-1}+1\leq k\leq 
l_{1}\Big\},\ \ \ \ \ \  \ \ E_m=\Big\{k\in \Z^*\ :\ l_{m}+1&\leq k\leq l_{m+1}\Big\}
\end{split}\end{equation*}
with $m\in\Z^*\setminus\{-1\}$. The partition of $\Z^*$ in subsets $\{E_m\}_{m\in\Z^*}$ also defines an 
equivalence 
relation in $\Z^*$. Now, 
$\{E_m\}_{m\in\Z^*}$ are the equivalence classes corresponding to such relation and $|E_m|\leq \MM$ thanks to 
\eqref{gapp11}. Let $s(m)$ 
be the smallest element of $E_m$. For every ${\bf {x}}:=(x_k)_{k\in\Z^*}\subset\C$ and $m\in\Z^*,$ we define 
$${\bf 
{x}}^m:=(x^m_l)_{l\leq |E_m|},\ \ \ \ \  : \ \ \  \ \ x_l^m=x_{s(m)+(l-1)},\ \ \ \ \ \forall l\leq |E_m|.$$
In other words, ${\bf x}^m$ is the vector in $\C^{|E_m|}$ composed by those elements of ${\bf x}$ with indices in 
$E_m$.
For every $m\in\Z^*$, we denote $F_m({\bf {\bf \Lambda}}^m):\C^{|E_m|}\rightarrow \C^{|E_m|}$ the matrix with 
components
\begin{equation*}
\begin{split}
F_{m;j,k}({\bf {\bf \Lambda}}^m):=\begin{cases}
\prod_{\underset{ l\leq k}{l\neq j}}(\lambda^m_j-\lambda_l^m)^{-1},\ \ \ \ \ & j\leq k,\\
1,\ \ \ \ \ \ \ \ \ \ \ \ \ \ \ \ & j=k=1,\\
0,\ \ \ \ \ \ \ \ \ \ \ \ \ \ \ \ & j>k,\\
\end{cases}\ \ \ \ \ \ \ \ \ \ \ \ \ \ \forall j,k\leq {|E_m|}.
\end{split}
\end{equation*}For each $k\in\Z^*$, there exists $m(k)\in\Z^*$ such that $k\in E_{m(k)}$, while $s(m(k))$ 
represents the smallest element of $E_{m(k)}$. Let $F({\bf {\bf \Lambda}})$ be the infinite matrix acting on ${\bf 
x}=(x_k)_{k\in\Z^*}\subset\C$ as follows
$$\big(F({\bf {\bf \Lambda}}){\bf x}\big)_k=\Big(F_{m(k)}({\bf {\bf \Lambda}}^{m(k)}){\bf 
x}^{m(k)}\Big)_{k-s(m(k))+1},\ \ \ \ 
\  \ \ \ \ \ \forall\, k\in\Z^*.$$
We consider $F({\bf {\bf \Lambda}})$ as the operator on $\ell^2(\Z^*,\C)$ defined by the action\ above and with 
domain
$$H({\bf {\bf \Lambda}}):=D(F({\bf {\bf \Lambda}}))=\left\{{\bf  x}:=(x_k)_{k\in\Z^*}\in\ell^2(\Z^*,\C)\ :\ F({\bf 
{\bf \Lambda}}){\bf  x}\in\ell^2(\Z^*,\C)\right\}.$$

\begin{osss}\label{limitatelo}
Each matrix $F_m({\bf {\bf \Lambda}}^m)$ with $m\in\Z^*$ is invertible and we call $F_m({\bf {\bf 
\Lambda}}^m)^{-1}$ its 
inverse. Now, $F({\bf {\bf \Lambda}}):H({\bf {\bf \Lambda}})\rightarrow Ran(F({\bf {\bf \Lambda}}))$ is invertible 
and $F({\bf 
{\bf \Lambda}})^{-1}:Ran(F({\bf {\bf \Lambda}}))\rightarrow H({\bf {\bf \Lambda}})$ is so that, for ${\bf x}\in 
Ran(F({\bf 
{\bf \Lambda}}))$,
$$\big(F({\bf {\bf \Lambda}})^{-1}{\bf x}\big)_k=\Big(F_{m(k)}({\bf {\bf \Lambda}}^{m(k)})^{-1}{\bf 
x}^{m(k)}\Big)_{k-s(m(k))+1},\ \ \ \ \  \ \ \ \ \forall k\in\Z^*.$$
\end{osss}

Let $F_{m(k)}({\bf {\bf \Lambda}}^{m(k)})^*$ be the transposed matrix of $F_{m(k)}({\bf {\bf \Lambda}}^{m(k)})$ 
for 
every 
$m\in\Z^*$. Let $F({\bf {\bf \Lambda}})^*$ be the infinite matrix so that, for every ${\bf 
x}=(x_k)_{k\in\Z^*}\subset\C$, 
$$\big(F({\bf {\bf \Lambda}})^*{\bf x}\big)_k=\Big(F_{m(k)}({\bf {\bf \Lambda}}^{m(k)})^*{\bf 
x}^{m(k)}\Big)_{k-s(m(k))+1},\ \ 
\ \ \  \ \ \ \ \forall k\in\Z^*.$$

\begin{prop}\label{stima1}
	Let ${\bf \Lambda}:=(\lambda_k)_{k\in\Z^*}$ be an ordered sequence of real numbers satisfying \eqref{gapp11}. 
Sufficient condition to have $H({\bf \Lambda})\supseteq h^{\tilde d}(\C)$ is the existence of $\tilde d\geq 0$ and 
$C>0$ such that
	\begin{equation}\begin{split}\label{gapp3}
	|\lambda_{k+1}-\lambda_k|\geq C |k|^{-\frac{\tilde d}{\MM-1}}\ \ \ \ \ \forall k\in\Z^*.\\
	\end{split}\end{equation}
\end{prop}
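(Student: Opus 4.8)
The plan is to prove the stronger fact that $F({\bf \Lambda})$ restricts to a \emph{bounded} operator from $h^{\tilde d}(\C)$ into $\ell^2(\C)$. Since $\tilde d\geq 0$ gives $h^{\tilde d}(\C)\subseteq\ell^2(\C)$, this at once yields $h^{\tilde d}(\C)\subseteq D(F({\bf \Lambda}))=H({\bf \Lambda})$, which is the claim. First I would use the block-diagonal structure built into the definition of $F({\bf \Lambda})$: recalling that ${\bf x}^m=\{x_l\}_{l\in E_m}$ and that $\left(F({\bf \Lambda}){\bf x}\right)_k$ depends only on ${\bf x}^{m(k)}$, one has
$$\|F({\bf \Lambda}){\bf x}\|_{\ell^2}^2=\sum_{m\in\Z^*}\big\|F_m({\bf \Lambda}^m){\bf x}^m\big\|_{\C^{i(m)}}^2,$$
so the whole problem reduces to a uniform estimate of the finite matrices $F_m({\bf \Lambda}^m)$, each acting on $\C^{i(m)}$ with $i(m)\leq\MM-1$.

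The core of the argument is the estimate of the entries of $F_m({\bf \Lambda}^m)$. Fix a block $E_m$ and let $n_m$ be the absolute value of one of its indices; as $E_m$ consists of at most $\MM-1$ consecutive indices of $\Z^*$, every $|k|$ with $k\in E_m$ is comparable to $n_m$ up to constants depending only on $\MM$. Because ${\bf \Lambda}$ is ordered, two distinct block entries $\lambda_j,\lambda_l$ are separated by at least one consecutive gap $|\lambda_{i+1}-\lambda_i|$ with $i\in E_m$, which by $(\ref{gapp3})$ and the said comparability is bounded below by $C'\,n_m^{-\tilde d/(\MM-1)}$ for a constant $C'>0$ depending only on $C$ and $\MM$. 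Each entry $F_{m;j,k}({\bf \Lambda}^m)$ is a product of at most $i(m)-1\leq\MM-2$ factors $(\lambda_j-\lambda_l)^{-1}$, so
$$\big|F_{m;j,k}({\bf \Lambda}^m)\big|\leq\big(C'\,n_m^{-\tilde d/(\MM-1)}\big)^{-(\MM-2)}=C'^{-(\MM-2)}\,n_m^{(\MM-2)\tilde d/(\MM-1)}.$$
Since each block has size at most $\MM-1$, bounding the operator norm by the Frobenius norm furnishes a constant $C''>0$, depending only on $C$ and $\MM$, with $\|F_m({\bf \Lambda}^m)\|\leq C''\,n_m^{(\MM-2)\tilde d/(\MM-1)}$.

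Summing over the blocks then gives
$$\|F({\bf \Lambda}){\bf x}\|_{\ell^2}^2\leq C''^2\sum_{m\in\Z^*}n_m^{2(\MM-2)\tilde d/(\MM-1)}\,\|{\bf x}^m\|^2.$$
The decisive point is that $(\MM-2)/(\MM-1)<1$, so the produced exponent $2(\MM-2)\tilde d/(\MM-1)$ is strictly below $2\tilde d$; together with $n_m\asymp|k|$ for $k\in E_m$ this yields $n_m^{2(\MM-2)\tilde d/(\MM-1)}\leq C_0\,|k|^{2\tilde d}$ for every $k\in E_m$ and a constant $C_0>0$, whence
$$\|F({\bf \Lambda}){\bf x}\|_{\ell^2}^2\leq C_1\sum_{k\in\Z^*}|k|^{2\tilde d}\,|x_k|^2=C_1\,\|{\bf x}\|_{h^{\tilde d}}^2.$$
I expect the main obstacle to be the bookkeeping of this exponent: the crucial observation is that any entry of $F_m$ involves at most $\MM-2$ reciprocal-difference factors — one less than the maximal block size — so the accumulated power of $n_m$ stays strictly under $\tilde d$ and is absorbed by the $h^{\tilde d}$ weight. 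This is exactly where the spectral-gap hypothesis $(\ref{gapp11})$ enters, since it caps each $i(m)$ at $\MM-1$; with unbounded block sizes the accumulated powers could not be controlled.
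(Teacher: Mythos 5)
Your proposal is correct and follows essentially the same route as the paper: block-diagonal reduction over the classes $E_m$, an entrywise bound on $F_m({\bf \Lambda}^m)$ obtained by bounding each reciprocal gap from below via $(\ref{gapp3})$ and the comparability of all indices within a block, an operator-norm bound via the Frobenius/trace norm, and absorption of the resulting power of the block index into the $h^{\tilde d}$ weight. The only (harmless) difference is bookkeeping: you count at most $\MM-2$ factors per entry and keep the exponent $(\MM-2)\tilde d/(\MM-1)<\tilde d$, whereas the paper majorizes the factor count by $\MM-1$ and lands exactly on the exponent $\tilde d$, which still suffices since the weight is $|k|^{2\tilde d}$.
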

\begin{proof}
	Thanks to $(\ref{gapp3})$, we have $|\lambda_j-\lambda_k|\geq C\min_{l\in E_m}|l|^{-\frac{\tilde d}{\MM-1}}$ 
for every $m\in\Z^*$ and $j,k\in E_m$.
	There exists $C_1>0$ such that, for $1<j,k\leq i(m)$,
	\begin{equation*}\begin{split}
	&|F_{m;j,k}({\bf \Lambda}^m)|\leq C_1\big(\max_{l\in E_m}|l|^{\frac{\tilde d}{\MM-1}}\big)^{k-1}\leq 
C_1\big(\max_{l\in E_m}|l|^{\frac{\tilde d}{\MM-1}}\big)^{\MM-1}\leq C_1 2^{\MM{\tilde d}}\min_{l\in 
E_m}|l|^{{\tilde d}},\ \ \ \ \ \ \ \  |F_{m;1,1}({\bf \Lambda}^m)|=1.
	\end{split}\end{equation*}
There exist $C_2,C_3>0$ such that, for $j\leq i(m)$, we have $\big(F_{m}({\bf \Lambda}^m)^*F_{m}({\bf 
\Lambda}^m)\big)_{j,j}\leq
	C_2\min_{l\in E_m}|l|^{2{\tilde d}}$ and then $Tr\Big(F_{m}({\bf \Lambda}^m)^*F_{m}({\bf 
\Lambda}^m)\Big)\leq C_3\min_{l\in E_m}|l|^{2{\tilde d}}$. Let $\rho(M)$ be the 
spectral radius of a matrix M and we denote $\iii M\iii=\sqrt{\rho(M^*M)}$ its euclidean norm. As $\big(F_{m}({\bf 
\Lambda}^m)^*F_{m}({\bf \Lambda}^m)\big)$ is positive-definite, there holds
	$$\iii F_m({\bf \Lambda}^m)\iii^2=\rho\big(F_{m}({\bf \Lambda}^m)^*F_{m}({\bf \Lambda}^m)\big)\leq
	C_3\min_{l\in E_m}|l|^{2{\tilde d}},\ \ \ \ \forall m\in\Z^*.$$
	In conclusion, $\|F({\bf \Lambda})
	{\bf x}\|_{\ell^2}^2\leq C_3\|{\bf x}\|_{h^{\tilde d}}^2<+\infty$ for ${\bf x}=(x_k)_{k\in\Z^*}\in h^{\tilde 
d}(\C)$ as
\begin{equation*}\begin{split}
	&\|F({\bf \Lambda})
	{\bf x}\|_{\ell^2}^2\leq\sum_{m\in \Z^*}\iii F_m({\bf \Lambda}^m)\iii
	^2\sum_{l\in E_m} |x_l|^2\leq C_3\sum_{m\in \Z^*}\min_{l\in E_m}|l|^{2{\tilde d}}
	\sum_{l\in E_m} |x_l|^2.\qedhere
	\end{split}
	\end{equation*}
\end{proof}

\begin{osss}\label{aggiunto}
	Thanks to Proposition $\ref{stima1}$, when $(\lambda_k)_{k\in\Z^*}$ satisfies $(\ref{gapp11})$ and 
$(\ref{gapp3})$, the space $H({\bf \Lambda})$ is dense in $\ell^2(\C)$ as $h^{\tilde d}$ is dense in $\ell^2$. 
Now, we consider $F({\bf {\bf \Lambda}})^*$ as the unique 
adjoint operator of $F({\bf {\bf \Lambda}})$ in $\ell^2(\Z^*,\C)$ with domain $H({\bf {\bf \Lambda}})^*:=D(F({\bf 
{\bf \Lambda}})^*)$.
	As in Remark $\ref{limitatelo}$, we define $(F({\bf {\bf \Lambda}})^*)^{-1}$ the inverse of $F({\bf 
{\bf \Lambda}})^*:H({\bf {\bf \Lambda}})^*\rightarrow Ran(F({\bf {\bf \Lambda}})^*)$ and $(F({\bf {\bf 
\Lambda}})^*)^{-1}=(F({\bf 
{\bf \Lambda}})^{-1})^*$. Finally, $H({\bf \Lambda})^*\supseteq h^{\tilde d}(\C)$ which follows as 
Proposition \ref{stima1}.
	\end{osss}

Let ${\bf e}$ be the sequence of functions in $L^2((0,T),\C)$ with $T>0$ so that
${\bf e}:=(e^{i\lambda_k (\cdot)})_{k\in\Z^*}.$
We denote by ${\bf \Xi}$ the so-called {divided differences} of the family $(e^{i\lambda_kt})_{k\in\Z^*}$ such that
\begin{align*}\label{xi_riesz}{\bf \Xi}:=(\xi_k)_{k\in\Z^*}=F({\bf {\Lambda}})^*{\bf e}.\end{align*}
In the following theorem, we rephrase a result of Avdonin and Moran $\cite{avdd}$, which is also proved by Baiocchi, Komornik and Loreti in $\cite{balocchi}$.
\begin{teorema}[Theorem\ 3.29; $\cite{wave}$]\label{riesz}
	Let $(\lambda_k)_{k\in\Z^*}$ be an ordered sequence of pairwise distinct real numbers satisfying $(\ref{gapp11})$.
	If $T>2\pi/\delta$, then $(\xi_k)_{k\in\Z^*}$ forms a Riesz Basis in the space $X:=\overline{span\{\xi_k|\ k\in\Z^*\}}^{ L^2}.$
\end{teorema}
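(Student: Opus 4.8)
The plan is to reduce the assertion to the two-sided inequality
$$ c\sum_{k\in\Z^*}|x_k|^2\ \le\ \Big\|\sum_{k\in\Z^*}x_k\,\xi_k\Big\|_{L^2((0,T),\C)}^2\ \le\ C\sum_{k\in\Z^*}|x_k|^2 $$
valid for every finitely supported $\{x_k\}_{k\in\Z^*}$, with constants $0<c\le C$ depending only on $T,\delta,\MM$; these two estimates say exactly that the synthesis map $\{x_k\}\mapsto\sum_k x_k\xi_k$ is bounded and bounded below, hence that $\{\xi_k\}_{k\in\Z^*}$ is a Riesz basis of its closed span $X$. The upper bound will hold for every $T>0$, and only the lower bound will require the threshold $T>2\pi/\delta$.

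First I would make the block structure explicit. By construction $\xi_k=(F({\bf\Lambda})^*{\bf e})_k=\sum_{j\in E_{m(k)},\,j\le k}\big(\prod_{l\neq j,\,l\le k}(\lambda_j-\lambda_l)\big)^{-1}e^{i\lambda_j(\cdot)}$, so inside each cluster $E_m$ the functions $\{\xi_k\}_{k\in E_m}$ are precisely the divided differences of $\lambda\mapsto e^{i\lambda(\cdot)}$ on the nodes $\{\lambda_j\}_{j\in E_m}$, and the passage between $\{e^{i\lambda_j(\cdot)}\}_{j\in E_m}$ and $\{\xi_k\}_{k\in E_m}$ is the invertible block $F_m({\bf\Lambda}^m)$ (Corollary \ref{limitatelo} and Remark \ref{aggiunto}). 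The point of the divided differences is that, by the Hermite--Genocchi representation, each $\xi_k$ remains well conditioned even as the nodes of $E_m$ coalesce: it tends to a confluent function $\tfrac{(i\,\cdot)^{p}}{p!}\,e^{i\mu_m(\cdot)}$ with $p<\MM$ and $\mu_m$ a representative frequency of the cluster. I would record the resulting uniform-in-$m$ upper and lower bounds for the local Gram matrices $\big(\la\xi_j,\xi_k\ra\big)_{j,k\in E_m}$.

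The core is Ingham's kernel method adapted to this grouping, following Avdonin--Moran \cite{avdd} and Baiocchi--Komornik--Loreti \cite{balocchi}. I would fix an even kernel $K\ge0$ supported in $(-T,T)$ whose Fourier transform $\widehat K$ satisfies $\widehat K(0)>0$ and decays fast enough that $\sum_{m'\neq m}|\widehat K(\mu_{m'}-\mu_m)|$ is strictly smaller than $\widehat K(0)$; the availability of such a $K$ is exactly what $T>2\pi/\delta$ guarantees, since the representative frequencies of distinct clusters are separated by at least $\delta$ (this is Ingham's original kernel choice with effective gap $\delta$, and it is where the sharp time enters). Writing $\sum_k x_k\xi_k=\sum_m f_m$ with $f_m:=\sum_{k\in E_m}x_k\xi_k$ localized near $\mu_m$, the quantity $\int_{\R}K\,\big|\sum_m f_m\big|^2\,dt$ splits into diagonal blocks $m=m'$ and off-diagonal blocks $m\neq m'$. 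The off-diagonal blocks carry frequency separations $\ge\delta$ and are controlled by the decay of $\widehat K$, summing to a remainder strictly dominated by the diagonal once $T>2\pi/\delta$; the diagonal blocks reproduce the local Gram forms, uniformly invertible by the bounds recorded above. This delivers the lower bound, while the upper bound follows from the same expansion with a Fej\'er-type kernel whose transform is a positive majorant and exists for every $T>0$.

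The main obstacle is the lower bound, and specifically the uniform control of the within-cluster interactions. As the frequencies of a cluster approach one another the plain exponentials $\{e^{i\lambda_j(\cdot)}\}_{j\in E_m}$ become nearly linearly dependent, so Ingham's theorem cannot be applied to them directly; the whole construction exists so that the normalization encoded in $F_m({\bf\Lambda}^m)$ keeps the local Gram matrices bounded below uniformly in $m$. Establishing this uniform lower bound down to the sharp time $T>2\pi/\delta$ --- equivalently, a generalized Ingham inequality for the confluent families $\{t^{p}e^{i\mu_m t}\}_{0\le p<|E_m|}$ separated by the gap $\delta$ --- is the technical heart of the argument. Once both inequalities are secured, the Riesz-basis property of $\{\xi_k\}_{k\in\Z^*}$ on $X$ is immediate.
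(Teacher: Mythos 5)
A preliminary remark on the comparison you were asked for: the paper contains no proof of this statement. Theorem \ref{riesz} is imported verbatim as Theorem 3.29 of \cite{wave}, and the surrounding text attributes it to Avdonin and Moran \cite{avdd} and to Baiocchi, Komornik and Loreti \cite{balocchi}; it is then used as a black box in the proof of Proposition \ref{momentproblem}. So there is no internal argument to measure your attempt against --- what you are reconstructing is the external proof from those references.

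As a reconstruction, your outline follows the correct road: the reduction to a two-sided $\ell^2$ inequality for the synthesis map, the identification of $\xi_k$ with the divided differences of $\lambda\mapsto e^{i\lambda(\cdot)}$ over the nodes of $E_{m(k)}$ (up to the harmless point that the passage matrix is the transpose $F_m({\bf\Lambda}^m)^*$ rather than $F_m({\bf\Lambda}^m)$), and the Ingham kernel method with clusters separated by $\delta$. But as a proof it has a genuine gap, and you in effect concede it: the uniform lower bound down to the sharp time $T>2\pi/\delta$ --- which you correctly call the technical heart --- is precisely the content of the Avdonin--Moran and Baiocchi--Komornik--Loreti theorem, and it is asserted rather than established. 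Two specific steps are missing. First, the off-diagonal terms are not simply $\widehat K(\mu_{m'}-\mu_m)$: after the divided-difference normalization, the interaction between two clusters involves integrals of the form $\int K(t)\,t^{p+q}e^{i(\lambda_j-\lambda_l)t}\,dt$ with $p+q$ bounded in terms of $\MM$, so one needs decay of the derivatives of $\widehat K$ up to that order at frequency separations $\geq\delta$, uniformly over cluster pairs, and one must verify that the resulting off-diagonal sum is still strictly dominated by the diagonal for \emph{every} $T>2\pi/\delta$; this is exactly where the argument in \cite{balocchi} is delicate. Second, the uniform invertibility of the local Gram matrices does not follow from the Hermite--Genocchi formula alone, because the nodes of $E_m$ may be arbitrarily close without coalescing, and the bound must be independent of their configuration inside an interval of length $\delta(\MM-1)$. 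Without these two estimates the plan does not close; with them it is the standard proof. Given that the paper itself treats the theorem as a citation, the honest options are to do the same or to carry out these two steps in full.
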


\begin{prop}\label{momentproblem123}
	Let $(\omega_k)_{k\in\N^*}\subset\R^+\cup\{0\}$ be an ordered sequence of real numbers with $\omega_1=0$ such that there exist $\tilde d\geq 0$, $\delta,C>0$ and $\MM\in\N^*$ with
	\begin{equation}\label{numer}\begin{split}
	\inf_{{k\in\N^*}}|\omega_{k+\MM}-\omega_k|\geq\delta \MM,\ \ \ \ \ \  
	|\omega_{k+1}-\omega_k|\geq C k^{-\frac{\tilde d}{\MM-1}},\ \ \ \ \ \forall k\in\N^*.\\
		\end{split}\end{equation}
	Then, for $T>2\pi/\delta$ and for every $(x_k)_{k\in\N^*}\in h^{\tilde d}(\C)$ with $x_1\in\R$, 
	\begin{equation}\label{momem}
	\exists u\in L^2((0,T),\R)\ \ \ \ :\ \ \ \ {x_{k}}=\int_{0}^Tu(\tau)e^{i\omega_k \tau}d\tau\ \ \ \ \ \ \ \ \forall k\in\N^*.\end{equation}
\end{prop}
\begin{proof}
When $k> 0$, we call $\lambda_k=\omega_{k}$, while $\lambda_k=-\omega_{-k}$ for $k<0$ such that ${k}\neq -1$. The 
sequence $(\lambda_k)_{k\in \Z^*\setminus\{-1\}}$ satisfies $(\ref{gapp11})$ and \eqref{gapp3} with respect to the 
indices $\Z^*\setminus\{-1\}$.
Theorem \ref{riesz} and the properties of a Reisz basis (see for instance $\cite[Appendix\ B.1;\ Definition\ 2\ 
\&\ Proposition\ 19)]{laurent}$ ensure the invertibility of the map $$M:g\in X\mapsto (\la 
\xi_k,g\ra_{L^2((0,T),\C)})_{k\in\Z^*\setminus\{-1\}}\in \ell^2(\C) ,\ \ \  \ \ \ \text{with}\ \ \ \  \ \ \ \la 
\xi_k,g\ra_{L^2((0,T),\C)}=(F({\bf \Lambda})^*\la {\bf e },g\ra_{L^2((0,T),\C)})_k.$$ Now, 
$H({\bf \Lambda})^*\supseteq h^{\widetilde d}(\C)$ from Remark 
$\ref{aggiunto}$ and the following map is invertible 
	$$ (F({\bf \Lambda})^*)^{-1}\circ M:g\in \tilde{X}\mapsto (\la 
e^{i\omega_k(\cdot)},g\ra_{L^2((0,T),\C)})_{k\in\Z^*\setminus\{-1\}}\in h^{\tilde d}(\C),\ \ \ \ \ \  \ 
\tilde{X}:=M^{-1}\circ 
F({\bf \Lambda})^*(h^{\widetilde d}(\C)).$$
For every $(\tilde x_k)_{k\in\Z^*\setminus\{-1\}}\in h^{\tilde d}(\Z^*\setminus\{-1\},\C)$, there exists $u\in 
L^2((0,T),\C)$ so that $\tilde {x_{k}}=\int_{0}^Tu(\tau)e^{i\lambda_k \tau}d\tau$ for every $k\in\Z^*.$ 
		 Given $(x_k)_{k\in\N^*}\in h^{\tilde d}(\N^*,\C)$, we call $(\tilde x_k)_{k\in 
\Z^*\setminus\{-1\}}\in h^{\tilde d}(\Z^*\setminus\{-1\},\C)$ such that $\tilde x_k=x_{k}$ for $k> 0$, while 
$\tilde x_k=\overline x_{-k}$ for $k<0$ and $k\neq -1$. As above, there exists $u\in L^2((0,T),\C)$ so that 
$x_1=\int_0^Tu(s)ds$ and $$\tilde x_k=\int_0^Tu(s)e^{-i\lambda_k s}ds, \ \ \forall k\in \Z^*\setminus\{-1\}\ \ \ \  
\ 
	\Longrightarrow\ \ \ \  $$
	$$\int_0^Tu(s)e^{i\omega_k s}ds=x_k=\int_0^T\overline{u}(s)e^{i\omega_k s}ds,\ \ \ \ \ \ \ \ k\in\N^*\setminus\{1\},$$
$$\int_0^T\overline{u}(s)e^{-i\omega_k s}ds=\overline{x_k}=\int_0^Tu(s)e^{-i\omega_k s}ds,\ \ \ \ \ \ \ \ k\in\N^*\setminus\{1\}.$$	
Then, $(\ref{momem})$ is solvable for $u\in L^2((0,T),\C)$ and we just need to prove that $u$ is actually real. The last relations and $x_{1}\in\R$ imply $\la 
Imm(u),e^{i\lambda_k(\cdot)}\ra_{L^2(0,T)}=0$ for every $k\in\Z^*\setminus\{-1\}$ and then 
$$\la Imm(u),\xi_k\ra_{L^2(0,T)}=\Big(F({\bf \Lambda})^*\big(\la 
Imm(u),e^{i\lambda_l(\cdot)}\ra_{L^2(0,T)}\big)_{l\in\Z^*\setminus\{-1\}}\Big)_k=0,\ \ \ \  \ \ \forall 
k\in\Z^*\setminus\{-1\}.$$
Now, the family $\{e^{i\lambda_kt}\}_{k\in\Z^*\setminus\{-1\}}$ is minimal in $X$ and 
$X=\overline{\spn\{e^{i\lambda_kt}:k\in\Z^*\setminus\{-1\}\}}^{ L^2}.$ For every 
$u\in X$,
$$\overline{u}\in \overline{\spn\{e^{-i\lambda_kt}:k\in\Z^*\setminus\{-1\}\}}^{ 
L^2}=\overline{\spn\{e^{i\lambda_kt}:k\in\Z^*\setminus\{-1\}\}}^{ 
L^2}=X,$$
which implies that $Imm(u)=\frac{u-\overline{u}}{2i}\in X$. We call ${\bf {v}}:=( v_k)_{k\in\Z^*\setminus\{-1\}}$ 
the biorthogonal sequence to ${\bf  \Xi}$ in $X$ which is also a Riesz basis of $X$. For every $u\in X$, we have 
$Imm(u)\in X$ and \begin{align*}Imm(u)=\sum_{k\in\Z^*\setminus\{-1\}}v_k c_{k},\ \ \ \  
\ \text{for}\ \ \ \ \ c_k=\la Imm(u),\xi_k\ra_{L^2(0,T)},\ \ \ \ \forall k\in\Z^*\setminus\{-1\}.\end{align*} In 
conclusion, when the function $u$ verifies $\la Imm(u),\xi_k\ra_{L^2(0,T)}=0$ for every $k\in\Z^*\setminus\{-1\}$, we have $Imm(u)=0$ and then $(\ref{momem})$ is solvable for $u\in L^2((0,T),\R)$.	\qedhere
\end{proof}

\begin{prop}\label{uppp123}
	Let $(\lambda_k)_{k\in\Z^*}$ be an ordered sequence of pairwise distinct real numbers satisfying $(\ref{gapp11})$. For every $T>0$, there exists $C(T)>0$ uniformly bounded for $T$ lying on bounded intervals such that
	$$\forall g\in L^2((0,T),\C),\ \ \ \ \ \  \ \left\|\int_0^T 
	e^{i\lambda_{(\cdot)} s}g(s) ds\right\|_{\ell^2}\leq C(T)\|g\|_{L^2((0,T),\C)}.$$
\end{prop}
\begin{proof}	{\bf 1) Uniformly separated numbers.} Let $(\omega_k)_{k\in\Z^*}\subset\R$ be such that 
$\gamma:=\inf_{k\neq j}|\omega_k-\omega_j|>0$. In the current proof, we adopt the notation $L^2:=L^2((0,T),\C)$.
	Thanks to the Ingham's Theorem $\cite[Theorem \ 4.3]{Ing}$, the sequence $\{e^{i\omega_k 
(\cdot)}\}_{k\in\Z^*}$ is a Riesz Basis in 
	$X=\overline{span\{e^{i\omega_k (\cdot)}:\ k\in\Z^*\}}^{\ L^2}\subset L^2$ when $T>{2\pi}/{\gamma}.$
 Now, there exists $C_1(T)>0$ such that $\sum_{k\in \Z^*}|\la e^{i\omega_k (\cdot)},u\ra_{L^2}|^2\leq 
C_1(T)^2\|u\|_{L^2}^2$ for every $ u\in X$ as in $\cite[relation\ (30)]{mio2}$. Let $P:L^2\longrightarrow X$ be 
the orthogonal projector. For $g\in L^2$, we have
	\begin{equation*}\begin{split}&\big\|(\la e^{i\omega_k 
(\cdot)},g\ra_{L^2})_{k\in\Z^*}\big\|_{\ell^2}=\big\|(\la e^{i\omega_k 
(\cdot)},Pg\ra_{L^2})_{k\in\Z^*}\big\|_{\ell^2}\leq C_1(T)\|Pg\|_{L^2}\leq 
C_1(T)\|g\|_{L^2}.\end{split}\end{equation*}

	\noindent
	{\bf 2) Pairwise distinct numbers.} Let $(\lambda_k)_{k\in\Z^*}$ be as in the hypotheses. We decompose 
$(\lambda_k)_{k\in\Z^*}$ in $\MM$ sequences $(\lambda_k^j)_{{k\in\Z^*}}$ with $j\leq \MM$ such that $\inf_{k\neq 
l}|\lambda_k^j-\lambda_l^j|\geq\delta\MM$ for every $j\leq \MM.$ Now, for every $j\leq \MM$, we apply the point 
{\bf 1)} with $(\omega_k)_{k\in\Z^*}=(\lambda_k^j)_{{k\in\Z^*}}.$ For every $T>2\pi/\delta\MM$ and $g\in L^2$, 
there exists $C(T)>0$ uniformly bounded for $T$ in bounded intervals such that
	\begin{equation*}\begin{split}
	&\left\|(\la e^{i\lambda_k (\cdot)},g\ra_{L^2})_{k\in\Z^*}\right\|_{\ell^2}\leq\sum_{j=1}^{\MM} \left\|(\la 
e^{i\lambda_k^j (\cdot)},g\ra_{L^2})_{k\in\Z^*}\right\|_{\ell^2}\leq \MM 
C(T)\|g\|_{L^2},\end{split}.\end{equation*}

	\noindent
	{\bf 3) Conclusion.} We know $\big\|\int_0^T e^{i\lambda_{(\cdot)} \tau}{g}(\tau)dt\big\|_{\ell^2}\leq \MM C(T)\|{g}\|_{L^2}$  for every $g\in L^2$ and, for $T>2\pi/\delta\MM$, we choose the smallest value possible for $C(T)$. When $T\leq 2\pi/\delta\MM$, for $g\in L^2$, we define $\tilde g\in L^2((0,2\pi/\delta\MM+1),\C)$ such that $\tilde g = g$ on $(0,T)$ and $\tilde g=0$ in $(T, 2\pi/\delta\MM+1)$. Then
	\begin{equation*}\Big\|\int_0^T e^{i\lambda_{(\cdot)} \tau}{ 
g}(\tau)dt\Big\|_{\ell^2}=\Big\|\int_0^{2\pi/\delta\MM+1} e^{i\lambda_{(\cdot)} \tau}{\tilde 
g}(\tau)dt\Big\|_{\ell^2}\leq \MM C(2\pi/\delta\MM+1)\|{g}\|_{L^2}.\end{equation*}
	Let $0<T_1<T_2 <+\infty$, $g\in L^2(0,T_1)$ and $\tilde g \in L^2(0,T_2)$ be defined as $\tilde g = g$ on $(0,T_1)$ and $\tilde g=0$ on $(T_1,T_2)$. We apply the last inequality to $\tilde g$ that leads to $C(T_1)\leq C(T_2)$.\qedhere
\end{proof}

\section{Analytic perturbation}\label{analitics}
	The aim of the appendix is to adapt the perturbation theory results from $\cite[Appendix\ B]{mio1}$, 
where the $(\ref{mainx1})$ is considered on $\Gi=(0,1)$ and $A$ is the Dirichlet Laplacian. As in such 
work, we decompose $u(t)=u_0+u_1(t),$ for $u_0$ and $u_1(t)$ real. Let $A+u(t)B=A+u_0B+u_1(t)B$. We 
consider $u_0B$ as a perturbative term of $A$. Let $(\lambda_j^{u_0})_{j\in\N^*}$ be the ordered spectrum of 
$A+u_0B$ corresponding to some eigenfunctions $(\phi_j^{u_0})_{j\in\N^*}$.

Let the definition of 
$\{E_m\}_{m\in\Z^*}$ provided in the first part of Appendix $\ref{momenti}$. We repeat the construction of such 
equivalence classes by considering $(\lambda_j)_{j\in\N^*}$ the sequence of the eigenvalues  of $A$ in the 
\eqref{mainx1}. In this case, we consider the indices $\N^*$ instead of $\Z^*$ and the validiy of Lemma 
\ref{g13} instead of \eqref{gapp11}. We denote $n:\N^*\rightarrow\N^*$, $s:\N^*\rightarrow \N^*$ and 
$p:\N^*\rightarrow \N^*$ those applications respectively mapping $j\in\N^*$ in $n(j),s(j),p(j)\in\N^*$ such that
	$$j\in E_{n(j)},\ \ \ \ \ \ \ \lambda_{s(j)}=\inf\{\lambda_k>\lambda_j\ |\ k\notin E_{n(j)}\},\ \ \ \ \ \  \ 
\lambda_{p(j)}=\sup\{k\in E_{n(j)}\}.$$

	\begin{lemma}\label{bound1}
		Let $(A,B)$ satisfy Assumptions I$(\eta)$ and Assumptions II$(\eta,\tilde d)$ for $\eta>0$ and $\widetilde d\geq 0$. There exists a neighborhood $U(0)$ of $u=0$ in $\R$ such that there exists $ c>0$ so that
			$$\iii(A+u_0B-\nu_k)^{-1}\iii\leq c,\ \ \ \ \ \  \ \nu_k:=(\lambda_{s(k)}-\lambda_{p(k)})/2,\ \ \ \ \ \forall u_0\in U(0),\ \forall k\in\N^*.$$
			Moreover, let $P_j^{\bot}$ be the projector onto $\overline{span\{\phi_m\ :\ m\not\in E_{n(j)}\}}^{\ 
L^2}$ with $j\in\N^*$. For $u_0\in U(0)$, the operator $(A+u_0P_{k}^{\bot}B-\lambda_k^{u_0})$ is invertible with 
bounded inverse from $D(A)\cap Ran(P_{k}^{\bot})$ to $Ran(P_{k}^{\bot})$ for every $ k\in\N^*$.
	\end{lemma}
	\begin{proof}
	 The proof exactly follows the ones of $\cite[Lemma\ B.2\ \&\ Lemma\ B.3]{mio1}$.\qedhere 
	\end{proof}

	\begin{lemma}\label{chain1}
		Let $(A,B)$ satisfy Assumptions I$(\eta)$ and Assumptions II$(\eta,\tilde d)$ for $\eta>0$ and $\widetilde 
d\geq 0$. There exists a neighborhood $U(0)$ of $u=0$ in $\R$ such that, up to a countable subset $Q$ and for every 
$(k,j),(m,n)\in I:=\{(j,k)\in(\N^*)^2:j< k\},\ (k,j)\neq(m,n)$, we have
		$$\lambda_k^{u_0}-\lambda_j^{u_0}-\lambda_m^{u_0}+\lambda_n^{u_0}\neq 0,\ \ \ \ \ \ \ \ \la\phi_k^{u_0},B\phi_j^{u_0}\ra_{L^2}\neq 0,\ \ \ \ \ \ \  \ \forall u_0\in U(0)\setminus Q.$$
		\end{lemma}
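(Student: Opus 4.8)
The plan is to prove Lemma \ref{chain1} as a genericity statement via analytic perturbation theory, following the scheme of \cite[Lemma\ B.2\ \&\ Lemma\ B.3]{mio1}. First I would fix, via Lemma \ref{bound1}, a neighborhood $U(0)\subset\R$ on which the uniform resolvent bound holds; together with the block decomposition $\{E_m\}_{m\in\Z^*}$ and Rellich's theorem for self-adjoint holomorphic families, this yields a \emph{single} neighborhood on which every eigenvalue $\lambda_j^{u_0}$ and a corresponding choice of normalized eigenfunction $\phi_j^{u_0}$ depend real-analytically on $u_0$. In particular, for every fixed tuple the functions
$$f_{k,j,m,n}(u_0):=\lambda_k^{u_0}-\lambda_j^{u_0}-\lambda_m^{u_0}+\lambda_n^{u_0},\qquad g_{k,j}(u_0):=\la\phi_k^{u_0},B\phi_j^{u_0}\ra$$
are real-analytic on $U(0)$. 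The whole statement then reduces to showing that each of these functions is not identically zero: the zero set of a non-trivial real-analytic function on $U(0)$ is discrete, hence at most countable, and the union of these zero sets over the countably many admissible tuples is the desired countable exceptional set $Q$.

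For the non-resonance condition I would argue as follows. Fix $(k,j),(m,n)\in I$ with $(k,j)\neq(m,n)$. If $f_{k,j,m,n}(0)\neq0$ there is nothing to prove, so assume $f_{k,j,m,n}(0)=0$, i.e. the unperturbed resonance $\lambda_k-\lambda_j=\lambda_m-\lambda_n$ holds. By first order perturbation theory $\frac{d}{du_0}\lambda_\ell^{u_0}\big|_{u_0=0}=\la\phi_\ell,B\phi_\ell\ra$, whence
$$f_{k,j,m,n}'(0)=\la\phi_k,B\phi_k\ra-\la\phi_j,B\phi_j\ra-\la\phi_m,B\phi_m\ra+\la\phi_n,B\phi_n\ra,$$
which is nonzero by the second point of Assumptions I applied precisely to this resonant quadruple. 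Hence $f_{k,j,m,n}$ vanishes to first order at most, so $f_{k,j,m,n}\not\equiv0$ and its zeros in $U(0)$ are isolated.

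For the coupling condition I would first dispose of the pairs involving the ground state: for $(k,1)$ or $(1,j)$ the first point of Assumptions I gives $|g_{k,1}(0)|=|B_{k,1}|\geq C k^{-2-\eta}>0$ (and $g_{1,j}(0)=\overline{B_{j,1}}$ by symmetry of $B$), so $g\not\equiv0$ immediately. For a generic pair $(k,j)$ with $g_{k,j}(0)=B_{k,j}=0$ one cannot conclude from the value at $u_0=0$ alone, and here I would reproduce the perturbative computation of \cite[Lemma\ B.3]{mio1}: one expands $\phi_k^{u_0}$ and $\phi_j^{u_0}$ through the reduced resolvents furnished by Lemma \ref{bound1} and exhibits a nonvanishing Taylor coefficient of $g_{k,j}$, using that all couplings to the ground state are nonzero so that no off-diagonal entry can vanish to infinite order. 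This last step is the main obstacle, since it is the only place where non-triviality is not immediate from the value at $u_0=0$; all the analyticity and countability bookkeeping is routine once Lemma \ref{bound1} secures uniform analyticity on $U(0)$. Taking $Q$ to be the countable union of the discrete zero sets of all the functions $f_{k,j,m,n}$ and $g_{k,j}$ then completes the proof, as for $u_0\in U(0)\setminus Q$ every listed inequality holds simultaneously.
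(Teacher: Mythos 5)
Your overall strategy is the one the paper uses: real--analytic dependence of $\lambda_k^{u_0}$ and $\phi_k^{u_0}$ on $u_0$, non-constancy of each function of interest, discreteness of the zero set of a non-trivial real-analytic function on $U(0)$, and a countable union over tuples to produce $Q$. For the non-resonance condition your argument coincides with the paper's in substance: the paper expands $\phi_k^{u_0}=a_k\phi_k+\sum_{j\in E_{n(k)}\setminus\{k\}}\beta_j^k\phi_j+\eta_k$, derives $\lambda_k^{u_0}=\widetilde a_k\lambda_k+u_0\widehat a_k B_{k,k}+u_0f_k'+O(u_0^2)$, and reads off exactly your first-order coefficient $B_{k,k}-B_{j,j}-B_{m,m}+B_{n,n}$, whose nonvanishing on resonant quadruples is the second point of Assumptions I. The cluster decomposition over the classes $E_m$ is there to control the near-degenerate groups uniformly in $k$; your bare Feynman--Hellmann formula implicitly assumes simple eigenvalues, but since non-constancy of an analytic function only requires one nonzero Taylor coefficient at $0$, this does not change the conclusion.

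Where you diverge is the coupling condition, and there you are in fact more cautious than the paper. The paper disposes of it in one line by asserting that $u_0\mapsto\la\phi_j^{u_0},B\phi_k^{u_0}\ra$ is not identically zero ``since $\la\phi_j,B\phi_k\ra\neq 0$'', i.e.\ it takes nonvanishing at $u_0=0$ for granted for every pair, even though Assumptions I only guarantees $B_{k,1}\neq 0$. You correctly note that for pairs $(k,j)$ with $k,j\neq 1$ the value at $0$ may vanish and propose to extract a nonzero higher-order Taylor coefficient; but your key claim that ``no off-diagonal entry can vanish to infinite order'' because the ground-state couplings are nonzero is not established: the first-order coefficient has the form $\sum_{l\neq k}\frac{B_{k,l}B_{l,j}}{\lambda_k-\lambda_l}+\sum_{l\neq j}\frac{B_{k,l}B_{l,j}}{\lambda_j-\lambda_l}$, in which the nonzero $l=1$ contribution can in principle cancel against the remaining terms, and nothing in Assumptions I excludes vanishing of all higher coefficients either. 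So this step is a genuine gap in your write-up for the lemma as literally stated. It is mitigated by the fact that downstream the lemma is only invoked for the ground-state couplings $\la\phi_k^{u_0},B\phi_1^{u_0}\ra$, for which your argument (like the paper's) is complete because $B_{k,1}\neq 0$ at $u_0=0$.
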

	\begin{proof}
		For $k\in\N^*$, we decompose $\phi_k^{u_0}=a_k\phi_k+\sum_{j\in E_{n(k)}^*}\beta_j^k\phi_j +\eta_k,$ where $a_k\in\C$, $\{\beta_j^k\}_{j\in\N^*}\subset\C$ and $\eta_k$ is orthogonal to $\phi_l$ for every $l\in E_{n(k)}$. Moreover, $\lim_{|u_0|\rightarrow 0}|a_k|=1$ and $\lim_{|u_0|\rightarrow 0}|\beta_j^k|=0$ for every $j,k\in\N^*$. We denote $E_{n(k)}^*:=E_{n(k)}\setminus\{k\}$ for every $k\in\N^*$ and
		\begin{equation*}
		\begin{split}
		&\lambda_k^{u_0}\phi_k^{u_0}=
		(A+u_0B)\Big(a_k\phi_k+\sum_{j\in E_{n(k)}^*}\beta_j^k\phi_j+\eta_k\Big)=a_k(A+u_0B)\phi_k+\sum_{j\in E_{n(k)}^*}\beta_j^k(A+u_0B)\phi_j+(A+u_0B)\eta_k.\\
		\end{split}\end{equation*}
		Now, Lemma $\ref{bound1}$ leads to the existence of $C_1>0$ such that, for every $k\in\N^*$,
		\begin{equation}\begin{split}\label{33333}
		\eta_k=&-\big(\big(A+u_0P_{k}^{\bot}B-\lambda_k^{u_0}
		\big)P_{k}^{\bot}\big)^{-1}u_0
		\Big(a_kP_{k}^{\bot}B\phi_k+\sum_{j\in E_{n(k)}^*}\beta_j^kP_{k}^{\bot}B\phi_j\Big)
		\end{split}\end{equation}
		and $\|\eta_k\|_{L^2}\leq{C_1|u_0|}.$ 
		We compute $\lambda_k^{u_0}=\la\phi_k^{u_0},(A+u_0B)\phi_k^{u_0}\ra_{L^2}$ for every $k\in\N^*$ and
		\begin{equation*}\begin{split}
		&\lambda_k^{u_0}=\Big(\lambda_k|a_k|^2+\sum_{j\in E_{n(k)}^*}\lambda_j|\beta_j^k|^2\Big)
		+\la\eta_k,(A+u_0B)\eta_k\ra_{L^2}+u_0\sum_{j,l\in E_{n(k)}^*}\overline{\beta_j^k}\beta_l^kB_{j,l}\\
		&+u_0|a_k|^2B_{k,k}+2u_0 \Re\Big(\sum_{j\in E_{n(k)}^*}\beta_j^k\la\eta_k,B\phi_j\ra_{L^2}+\overline{a_k}
		\sum_{j\in E_{n(k)}^*}\beta_j^kB_{k,j}+\overline{a_k}\la \phi_k,B\eta_k\ra_{L^2}\Big).\\
		\end{split}\end{equation*}
		Thanks to $(\ref{33333})$, it follows $\la\eta_k,(A+u_0B)\eta_k\ra_{L^2}=O(u_0^2)$ for every $k\in\N^*$. 
Let $$\widehat a_k:=\frac{|a_k|^2+\sum_{j\in E_{n(k)}^*}|\beta_j^k|^2}{1-\|\eta_k\|_{L^2}^2},\ \ \ \  \ 
\ \ \widetilde a_k:=\frac{|a_k|^2+\sum_{j\in E_{n(k)}^*}\lambda_j/\lambda_k|\beta_j^k|^2}{1-\|\eta_k\|_{L^2}^2}.$$ 
As $\|\eta_k\|_{L^2}\leq{C_1|u_0|}$ for every $k\in\N^*$, it follows $\lim_{|u_0|\rightarrow 0}|\widehat a_k|=1$ 
uniformly in $k$. Thanks to the fact $\lim_{k\rightarrow +\infty}\inf_{j\in E_{n(k)}^*} 
{\lambda_j}{\lambda_k}^{-1}=\lim_{k\rightarrow +\infty}\sup_{j\in E_{n(k)}^*} {\lambda_j}{\lambda_k}^{-1}=1,$
we have $\lim_{|u_0|\rightarrow 0}|\widetilde a_k|=1$ uniformly in $k$.	Now, there exists $f_{k}$ such that $
		\lambda_k^{u_0}=\widetilde a_k\lambda_k+u_0\widehat a_kB_{k,k}+u_0f_k$ where $\lim_{|u_0|\rightarrow 0}f_{k}=0$ uniformly in $k$ (the relation is also valid when $\lambda_k=0$). For each $(k,j),(m,n)\in I$ such that $(k,j)\neq(m,n)$, there exists $f_{k,j,m,n}$ such that $\lim_{|u_0|\rightarrow 0}f_{k,j,m,n}=0$ uniformly in $k,j,m,n$ and
		\begin{equation*}
		\begin{split}
		&\lambda_k^{u_0}-\lambda_j^{u_0}-\lambda_m^{u_0}+\lambda_n^{u_0}=\widetilde a_k\lambda_k-\widetilde a_j\lambda_j-\widetilde a_m\lambda_m+\widetilde a_n\lambda_n+u_0f_{k,j,m,n}+u_0(\widehat a_kB_{k,k}-\widehat a_jB_{j,j}-\widehat a_mB_{m,m}\\
		&+\widehat a_nB_{n,n})=\widetilde a_k\lambda_k-\widetilde a_j\lambda_j-\widetilde a_m\lambda_m+\widetilde a_n\lambda_n+u_0(\widehat a_kB_{k,k}-\widehat a_jB_{j,j}-\widehat a_mB_{m,m}+\widehat a_nB_{n,n})+O(u_0^2).
		\end{split}
		\end{equation*}
		Thanks to the second point of Assumptions I, there exists $U(0)$ a neighborhood of $u=0$ in $\R$ small enough such that, for each $u\in U(0)$, we have that every function $\lambda_k^{u_0}-\lambda_j^{u_0}-\lambda_m^{u_0}+\lambda_n^{u_0}$ is not constant and analytic.
		Now, $V_{(k,j,m,n)}=\{u\in D\big|\ \lambda_k^{u}-\lambda_j^{u}-\lambda_m^{u}+\lambda_n^{u}=0\}$ is a discrete subset of $D$ and
		$$V=\{u\in D\big|\ \exists ((k,j),(m,n))\in I^2: \lambda_k^{u}-\lambda_j^{u}-\lambda_m^{u}+\lambda_n^{u}=0\}$$
		is a countable subset of $D$, which achieves the proof of the first claim. The second relation is proved with the same technique. For $j,k\in\N^*$, the analytic function $u_0\rightarrow \la\phi_j^{u_0},B\phi_k^{u_0}\ra_{L^2}$ is not constantly zero since $\la\phi_j,B\phi_k\ra_{L^2}\neq 0$ and $W=\{u\in D\big|\ \exists (k,j)\in I: \la\phi_j^{u_0},B\phi_k^{u_0}\ra_{L^2}=0\}$ is a countable subset of $D$. \qedhere
	\end{proof}
	

	\begin{lemma}\label{equi1}
		Let $(A,B)$ satisfy Assumptions I$(\eta)$ and Assumptions II$(\eta,\tilde d)$ for $\eta>0$ and $\widetilde 
d\geq 0$. Let $T>0$ and $d$ be introduced in Assumptions II. Let $c\in\R$ such that 
$0\not\in\sigma(A+u_0B+c)$ (the spectrum of $A+u_0B+c$) and such that $A+u_0B+c$ is a positive operator. There 
exists a neighborhood $U(0)$ of $0$ in $\R$ such that, for every $s\in (0,2+d]$,
\begin{equation}\label{cazr}\begin{split}\forall u_0\in 
U(0),\ \  \ \ \ \ \Big\||A+u_0B+c|^\frac{s}{2}\cdot\Big\|_{L^2}\ 
		\asymp\ \big\|\cdot\big\|_{(s)}.\end{split}\end{equation}\end{lemma}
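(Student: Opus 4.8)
The plan is to prove the stated equivalence in two stages: first replace the reference norm $\|\cdot\|_{(s)}$ by a fractional power of the \emph{unperturbed} shifted operator, and then compare the perturbed and unperturbed fractional powers by a uniform perturbation estimate. Set $A_0:=A+c$ and $A_{u_0}:=A+u_0B+c=A_0+u_0B$. By Remark $\ref{interessante}$ the eigenvalues of $A$ satisfy $\lambda_k+c\asymp k^2$, so, exactly as in Remark $\ref{label}$,
$$\big\|A_0^{s/2}\psi\big\|^2=\sum_{k\in\N}(\lambda_k+c)^s|\la\psi,\phi_k\ra|^2\asymp\sum_{k\in\N}k^{2s}|\la\psi,\phi_k\ra|^2=\|\psi\|_{(s)}^2.$$
Hence it suffices to prove $\|A_{u_0}^{s/2}\cdot\|\asymp\|A_0^{s/2}\cdot\|$ uniformly for $u_0\in U(0)$, that is, $D(A_{u_0}^{s/2})=D(A_0^{s/2})=H^s_{\Gi}$ with uniformly equivalent graph norms.

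First I would fix the base facts. Shrinking $U(0)$ if necessary, the min-max principle gives $|\lambda_k^{u_0}-\lambda_k|\le|u_0|\,\iii B\iii$ for all $k$, so both $A_0$ and $A_{u_0}$ are positive and invertible with spectra in $[\epsilon_0,\infty)$, $\epsilon_0>0$, uniformly in $u_0\in U(0)$. Since $u_0B$ is bounded, $A_{u_0}$ and $A_0$ share the domain $D(A_{u_0})=D(A_0)=H^2_{\Gi}$, and using $A_0\ge\epsilon_0$ one gets $\|A_{u_0}\psi\|\asymp\|A_0\psi\|$ uniformly. The crucial structural input is that $B$ is bounded on the \emph{whole} scale $H^r_{\Gi}=D(A_0^{r/2})$, $r\in[0,s]$: indeed Assumptions II together with Proposition $\ref{bor}$ yield $B\in L(H^s_{\Gi},H^s_{\Gi})$ and $B\in L(H^2_{\Gi},H^2_{\Gi})$ exactly as in the proof of Proposition $\ref{laura}$, and complex interpolation of these endpoints (together with boundedness on $\Hi$) gives $B\in L(H^r_{\Gi},H^r_{\Gi})$ for every $r\in[0,s]$.

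For the fractional comparison, set $\theta:=s/2\in[3/2,11/4)$ and aim to show that $T:=A_{u_0}^{\theta}A_0^{-\theta}$ is bounded on $\Hi$ with $\|T-\mathrm{Id}\|\le C|u_0|$, which for $|u_0|$ small makes $T$ invertible with uniformly bounded inverse by a Neumann series and gives the two-sided estimate. Writing $A^{\theta}=A^{\lfloor\theta\rfloor}A^{\{\theta\}}$, the fractional factor is treated through the integral representation of the fractional power combined with the second resolvent identity
$$(A_{u_0}+t)^{-1}-(A_0+t)^{-1}=-\,u_0\,(A_{u_0}+t)^{-1}B\,(A_0+t)^{-1},$$
which exhibits $A_{u_0}^{\{\theta\}}-A_0^{\{\theta\}}$ as $u_0$ times a norm-convergent $t$-integral; the integer factor is handled by expanding $A_{u_0}^{\lfloor\theta\rfloor}-A_0^{\lfloor\theta\rfloor}$ and using that $B$ preserves $H^{2j}_{\Gi}$ for $2j\le s$. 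In both pieces the spectral lower bound $A_0\ge\epsilon_0$ and the asymptotics $\lambda_k\asymp k^2$ guarantee convergence of the relevant $t$-integrals and make every constant independent of $u_0$.

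The main obstacle is precisely this uniform control of the resolvent-difference integral for a \emph{non-integer} exponent: one must bound $\int_0^\infty t^{\{\theta\}}(A_{u_0}+t)^{-1}B(A_0+t)^{-1}A_0^{-\{\theta\}}\,dt$ uniformly, securing integrability both as $t\to0$ and as $t\to\infty$ while keeping all estimates independent of $u_0$ and of the index $k$. This is where the boundedness of $B$ on the full scale $H^r_{\Gi}$ is indispensable, since a naive symmetric ``climb'' from $\theta\le1$ (where Heinz--Kato already yields $D(A_{u_0}^{\sigma})=D(A_0^{\sigma})$ with uniform constants) to $\theta=s/2$ is circular: the reverse inclusion would require $B$ to be bounded on $D(A_{u_0}^{\theta})$, which is not available a priori. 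The Neumann-series argument above circumvents this circularity for $|u_0|$ small, and combining it with the first reduction completes the proof of $(\ref{cazr})$.
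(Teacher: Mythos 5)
Your route is genuinely different from the paper's: you compare the perturbed and unperturbed fractional powers through $T=A_{u_0}^{s/2}A_0^{-s/2}$, using the Balakrishnan integral representation, the second resolvent identity and a Neumann series, whereas the paper proves the inequality $\|(A+u_0B)^{k}\psi\|\leq C\|A^{k}\psi\|$ directly by an iterative expansion of integer powers, extends to non-integer exponents by interpolation (following \cite[Lemma B.6]{mio1}), and obtains the reverse inequality simply by writing $A=(A+u_0B)-u_0B$. The skeleton of your argument is reasonable, but it does not close as written, for the following reason.

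The step you declare \emph{indispensable} --- that $B\in L(H^r_{\Gi},H^r_{\Gi})$ for every $r\in[0,s]$ --- is not a consequence of Assumptions II and Proposition \ref{bor}. Under Assumptions II.1 one only has $Ran(B|_{H^{2+d}_{\Gi}})\subseteq H^{2+d}\cap H^2_{\Gi}$, and since $d\geq 1$, Proposition \ref{bor} gives $H^{2+d}_{\Gi}=H^{3}_{\Gi}\cap H^{2+d}$, which is \emph{strictly smaller} than $H^{2+d}\cap H^2_{\Gi}$; accordingly, the proof of Proposition \ref{laura} only records $B\in L(H^{2+d}_{\Gi},H^{2+d})$, not $B\in L(H^{2+d}_{\Gi},H^{2+d}_{\Gi})$. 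The same happens in cases 2 and 3 of Assumptions II: the most one can extract is $B\in L(H^{d_1}_{\Gi})$ for some $d_1$ with $s-2<d_1<s$, never $d_1=s$. This is precisely why the paper's proof is organized so that $B$ is only required to be bounded on spaces of index strictly below $s$ (losing roughly two orders of regularity at each application of $B$ in the expansion of $(A+u_0B)^{k}$), and why for $s<4$ it gets by with just $B\in L(\Hi)$ and $B\in L(H^2_{\Gi})$. In your scheme the missing hypothesis bites exactly at the step you flag as the main obstacle: after inserting the resolvent identity you must absorb the factor $A_{u_0}^{\lfloor s/2\rfloor}$ standing in front of $\int_0^\infty t^{\{s/2\}}(A_{u_0}+t)^{-1}B(A_0+t)^{-1}\,dt$, and since $\|A_{u_0}^{j}(A_{u_0}+t)^{-1}\|$ is not bounded in $t$ for $j\geq 2$, the only way to do so is to commute powers of $A_0$ through $B$, which is where $B\in L(H^r_{\Gi})$ for $r$ up to $s$ would be needed. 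For $s<4$ your argument can be repaired (interpolating between $\Hi$ and $D(A_0^2)$ only needs $B\in L(H^2_{\Gi})$, and your claimed circularity of the ``climb'' does not actually occur once $D(A_{u_0}^2)=D(A_0^2)$ is established); for $s\in[4,11/2)$ it needs the paper's finer bookkeeping with the intermediate exponent $d_1\in(s-2,7/2)$, which your proposal does not supply.
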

	\begin{proof}
		Let $D$ be the neighborhood provided by Lemma $\ref{chain1}$. By applying the arguments of 
the proof of $\cite[Lemma\ B.6]{mio1}$, it is possible to prove that the relation $(\ref{cazr})$ is valid for 
$s\in [s_1,s_1+2)$ when $B:H^{s_1}_{\Gi}\longrightarrow H^{s_1}_{\Gi}$. By classical interpolation results, the 
relation $(\ref{cazr})$ is valid for every
$s\in [0,s_1+2)$. Finally, how to consider $s_1$ in the different cases of Assumptions II is treated by the 
point {\bf 2)} of the proof of Proposition \ref{approx}.\qedhere\end{proof}

\end{document}